\newtheorem{theorem}{Theorem}
\newtheorem{corollary}[theorem]{Corollary}
\newtheorem{lemma}[theorem]{Lemma}
\newtheorem{problem}[theorem]{Problem}
\def\adots{\mathinner{\mkern2mu\raise0pt\hbox{.}  
\mkern2mu\raise4pt\hbox{.}\mkern1mu
\raise7pt\vbox{\kern7pt\hbox{.}}\mkern1mu}}
\def\F{\mathscr{F}_k}
\newcommand{\C}{{\Bbb C}}
\def\tu{{\rm Tur{\'a}n\,}}
 \date{}
\begin{document}
\title{ \tu problems for digraphs  avoiding distinct walks of a
given length  with the same
endpoints}
\author{ Zejun Huang,\thanks{Institute of Mathmatics, Hunan University, Changsha  410082, P.R. China.  (mathzejun@gmail.com) } ~~Zhenhua Lyu,\thanks{College of Mathematics and Econometrics, Hunan University, Changsha  410082, P.R. China.  (lyuzhh@outlook.com)}~~ Pu Qiao\thanks{Department of
Mathematics, East China Normal University, Shanghai
200241, China. (235711gm@sina.com)}  } \maketitle

\begin{abstract}
 Let $n\ge 5$ and $k\ge 4$ be positive integers.
We determine the maximum size of digraphs of order $n$ that avoid distinct walks of
 length $k$ with the same
endpoints. We also characterize the extremal digraphs attaining this maximum number when $k\ge 5$.
\end{abstract}

{\bf Key words:}
digraph, \tu problem, transitive tournament, walk

{\bf AMS  subject classifications:} 05C35, 05C20
\section{Introduction}

 \tu problems concern  the study of the maximum number,  called \tu number, of edges in graphs  containing no given subgraphs and the extremal graphs realizing that maximum. Mantel's theorem determines the maximum number of edges of triangle-free simple graphs as well as the unique graph attaining that maximum. Paul \tu  \cite{PT,PT2} generalized Mantel's theorem  by determining the maximum number of edges of $K_r$-free graphs on $n$ vertices and the unique graph  attaining that maximum, where $K_r$ denotes the complete graph on $r$ vertices.
\tu's theorem initiated the development of a major branch of graph theory, known as extremal graph theory \cite{BB,VN}.
Most of the previous results in extremal graph theory concern undirected graphs and only a few extremal problems on digraphs have been investigated; see \cite{BB,BES,BES2,BH,BS,JM}.
In this paper we study  extremal problems on digraphs.

We consider strict digraphs, i.e., digraphs without loops and parallel arcs.  For digraphs, we abbreviate directed walks and directed cycles as walks and cycles, respectively. The number of vertices in a digraph is called its {\it order} and the number of arcs its {\it size}. We   use $\overrightarrow{K}_r$ and $\overrightarrow{C}_r$ to denote the complete digraph and the directed cycle on $r$ vertices.

One natural \tu problem on digraphs is determining the maximum size of a $\overrightarrow{K}_r$-free strict digraph of a given order, which has been solved in \cite{JM}.

Note that the $k$-cycle is a  generalization  of the triangle  when we view a triangle as a 3-cycle in  undirected graphs. Another generalization of Mantel's Theorem is the \tu problem  for $k$-cycle-free  graphs.
However, this problem is difficult even for $C_4$-free graphs \cite{FK,TT}.
  An alternative direction on this problem is considering the orientations of $C_k$-free graphs. For example,  $C_4$ has the following orientations.

 \begin{figure}[H]
        \centering
        \includegraphics[width=1in]{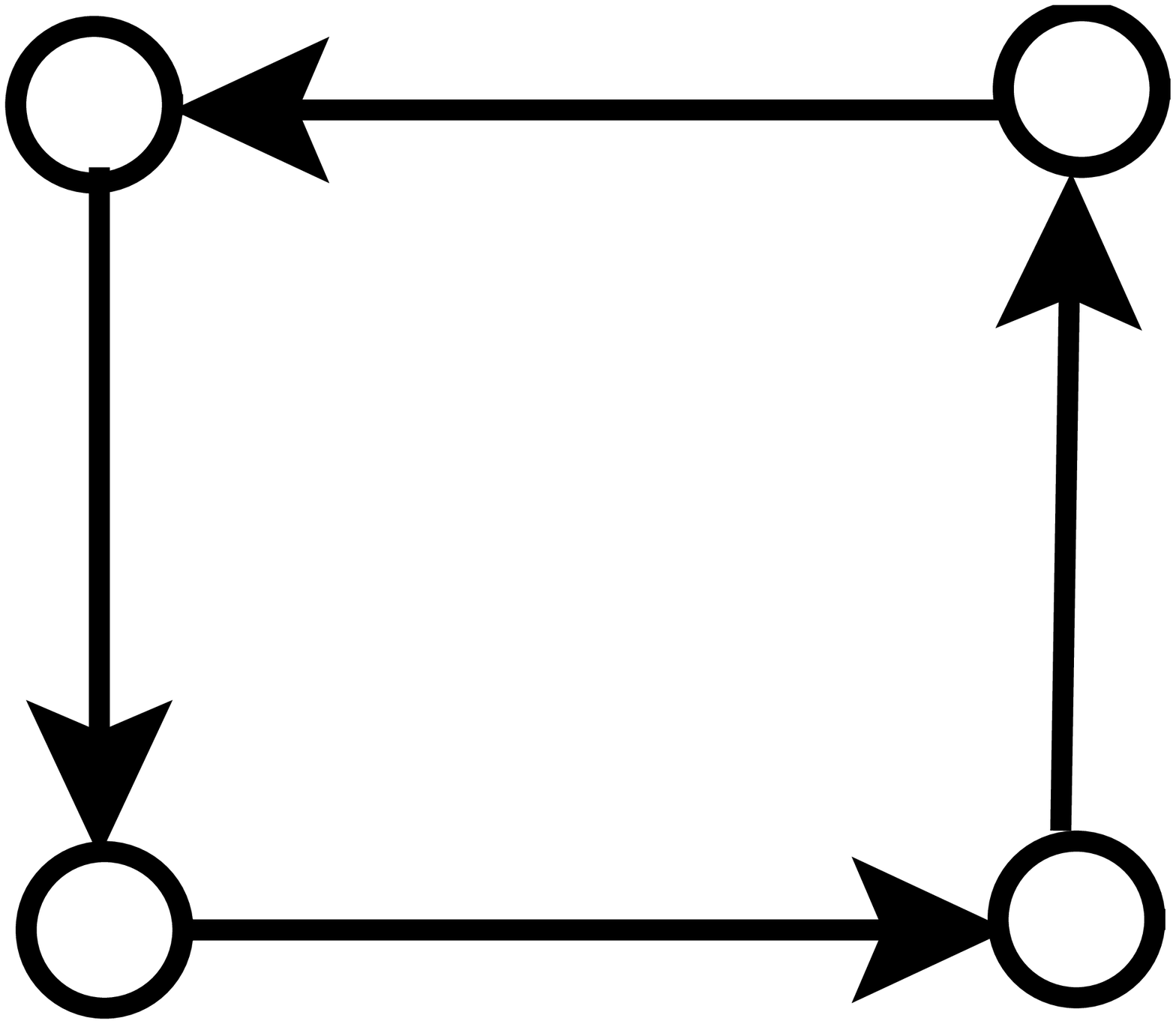}\hspace{0.8cm}
        \includegraphics[width=1in]{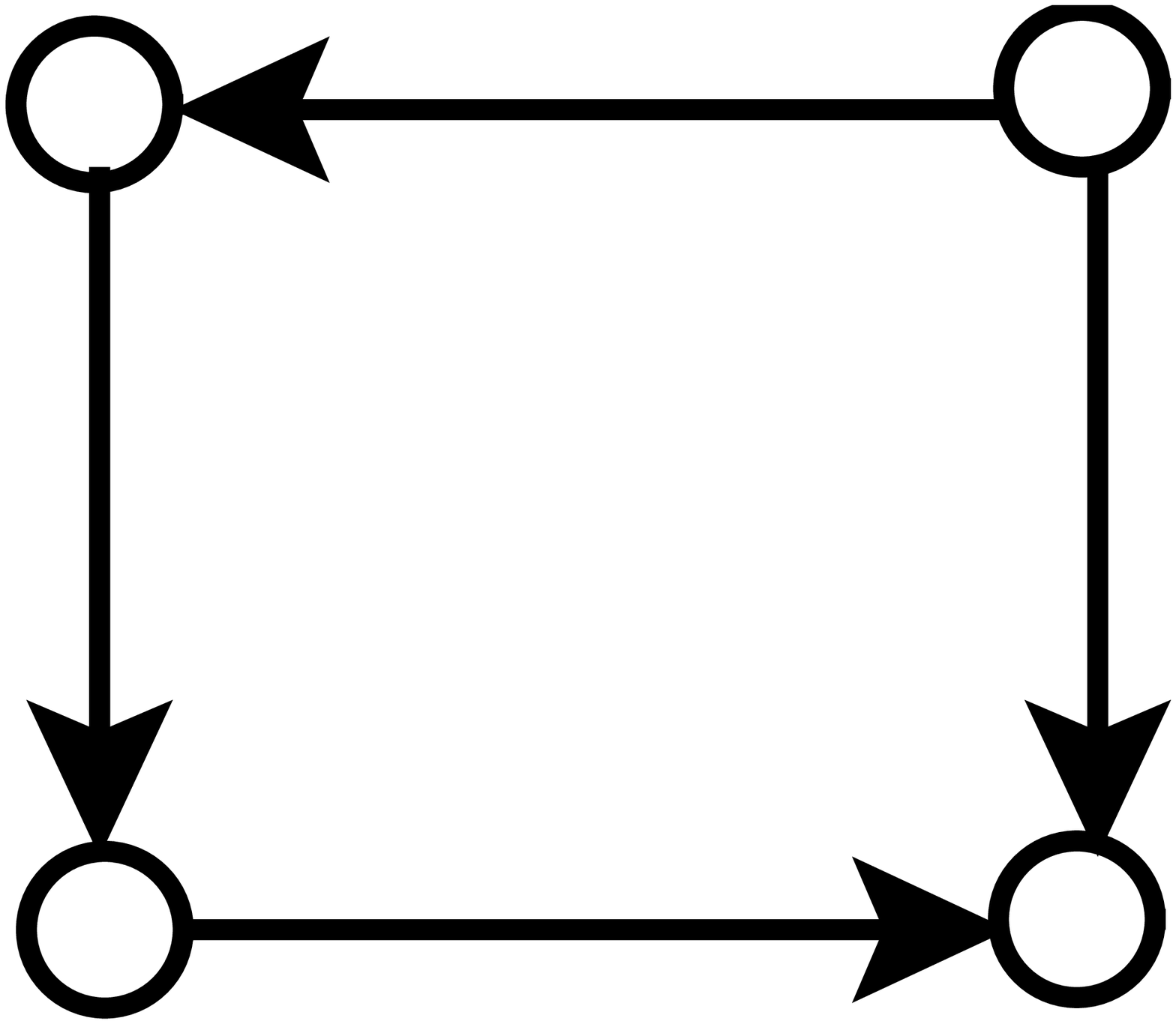}\hspace{0.8cm}
        \includegraphics[width=1in]{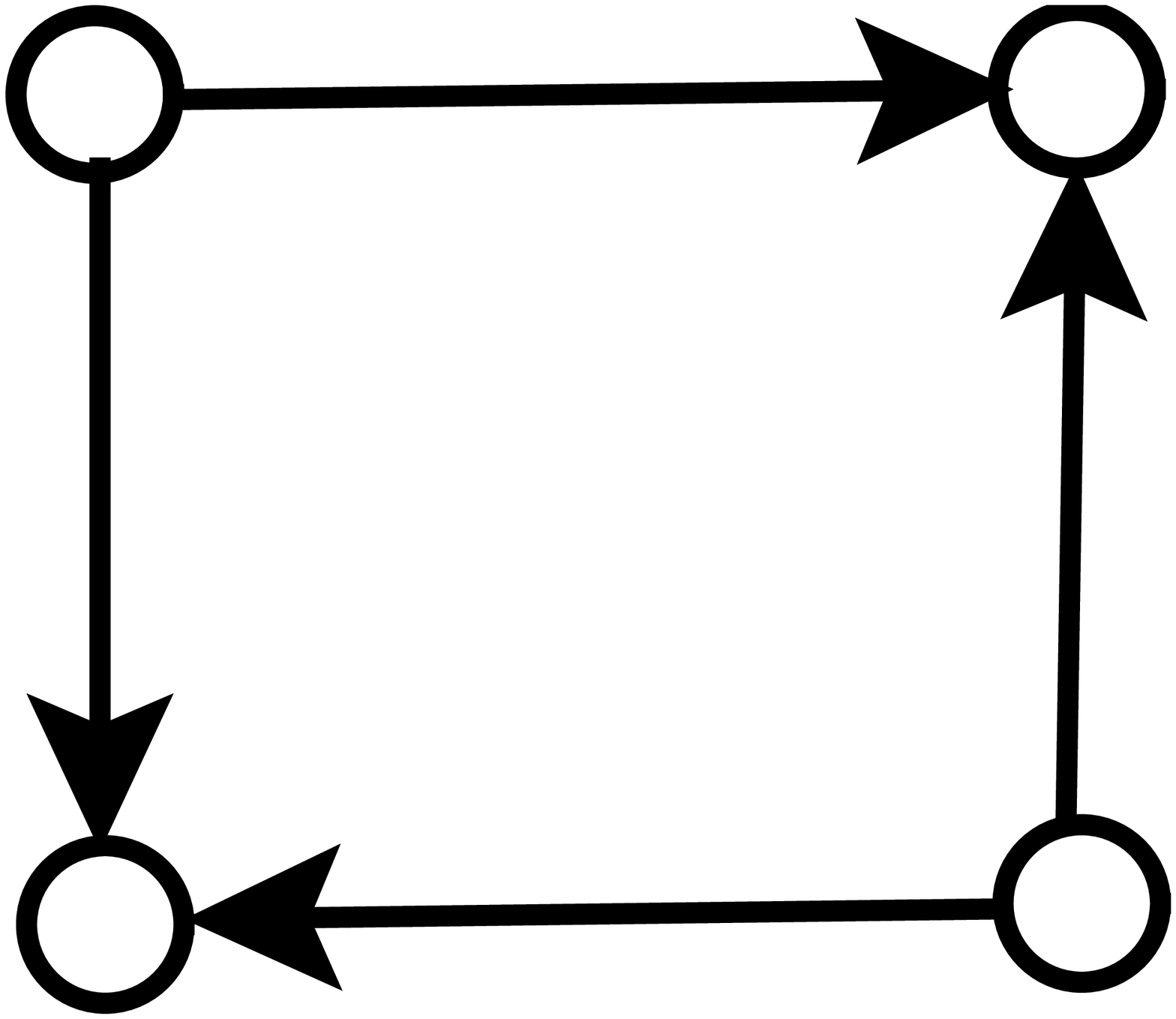}\hspace{0.8cm}
        \includegraphics[width=1in]{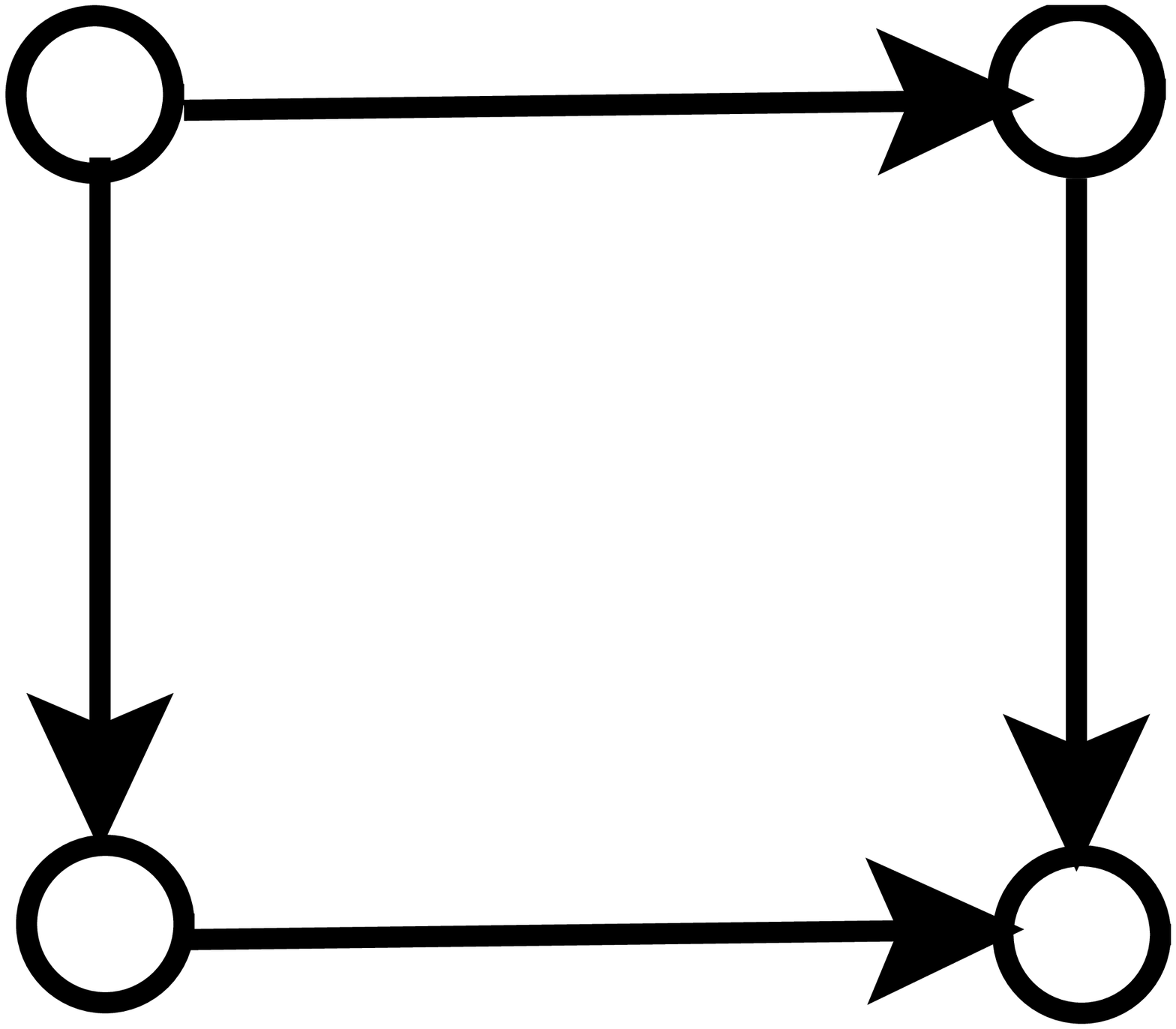}\\

       $C_4^{(1)}$\hspace{2.8cm}$C_4^{(2)}$\hspace{2.8cm}
       $C_4^{(3)}$\hspace{2.8cm}$C_4^{(4)}$ \end{figure}
\noindent It is clear that a graph is $C_4$-free if and only if any of its orientation contains no copy of the above four digraphs. Hence the \tu number for $C_4$-free graphs is equal to that for $\{C_2,C_4^{(1)},C_4^{(2)},C_4^{(3)},C_4^{(4)}\}$-free digraphs. For  $t\in\{1,2,3,4\}$, the \tu problem for $C_4^{(t)}$-free digraphs has independent interests; see \cite{BH}. We will consider a generalization of the \tu problem for $C_4^{(4)}$-free digraphs.

Given a positive integer $k$, we denote by
 $\mathscr{F}_k$ the family of  digraphs consisting of two different walks of length $k$ with the same initial vertex and the same terminal vertex, which have the following diagram
   \begin{figure}[H]
        \centering
        \includegraphics[width=4in]{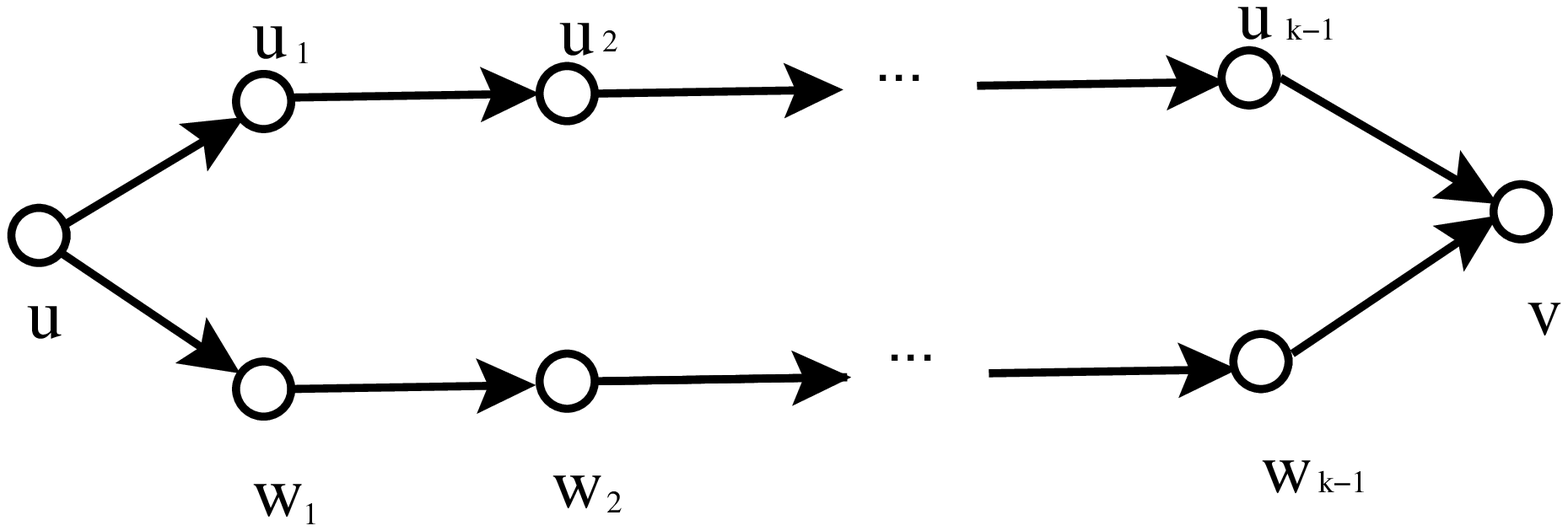}
\end{figure}
 \noindent where the vertices $u,v,u_1,u_2,\ldots, u_{k-1},w_1,w_2,\ldots,w_{k-1}$ can be  duplicate but $$(u_1,u_2,\ldots,u_{k-1})\ne (w_1,w_2,\ldots,w_{k-1}).$$

 We say a digraph $D$ is {\it $\F$-free} if $D$ contains no subgraph from $\F$.
 For any digraph $D$ on the vertices $1,2,\ldots,n$,  $D $ is $\F$-free if and only if there is at most one walk of length $k$ from $i$ to $j$ for every pair of vertices $i,j$. Let $ex(n,\F)$ be  the maximum size of $\F$-free strict digraphs of order $n$ and $Ex(n,\F)$ be the set of $\F$-free strict digraphs of order $n$ with size $ex(n,\F)$. We study the following   problem on strict digraphs.

\begin{problem}\label{pro1}
Given positive integers $n $ and $k$, determine $ex(n,\F)$ and  $Ex(n,\F)$.
\end{problem}

 When $k=1$, it is clear that $ex(n,\mathscr{F}_1)=n(n-1)$ and the unique extremal  digraph attaining $ex(n,\mathscr{F}_1)$ is the complete digraph of order $n$. When $k=2$,
 $\mathscr{F}_2$ consists of a unique digraph $C_4^{(4)}$.

In this paper, we always assume the order $n\ge 5$. We will determine $ex(n,\F)$ for $k\ge 4$ and characterize $Ex(n,\F)$  for $k\ge 5$. The paper is organized as follows. Section 2  presents our main result  Theorem \ref{thh2}, which determines $ex(n,\F)$ for $n\ge k+4\ge 8$ and characterizes $Ex(n,\F)$ for $n\ge k+5\ge 10$;  section 3 presents the characterization of $Ex(n,\F)$ for $k\ge 4$ and $n=k+2,k+3,k+4$;  section 4 presents the proof of Theorem \ref{thh2}; section 5 gives a discussion of the unsolved cases.

\section{Main result}
In order to present our main result, we need the follow notations and definitions.
Let $A$ be an $n\times n$ matrix and $\alpha=\{i_1,i_2,\ldots,i_k\}\subseteq \{1,2,\ldots,n\}$. We denote by   $A[\alpha]$  or $A[i_1,i_2,\ldots,i_k]$  the principal submatrix of $A$ lying on its $i_1$-th, $i_2$-th, $\ldots$, $i_k$-th rows and columns, and denote by $A(\alpha)$  or $A(i_1,i_2,\ldots,i_k)$  the principal submatrix of $A$ obtained by deleting its $i_1$-th, $i_2$-th, $\ldots$, $i_k$-th  rows and columns.

 Let $D=(\mathcal{V},\mathcal{A})$ be a digraph with vertex set $\mathcal{V}=\{v_1,v_2,\ldots,v_n\}$ and arc set $\mathcal{A}$. Its {\it adjacency matrix} $A_D=(a_{ij})$ is defined by
\begin{equation}\label{eqh1}
a_{ij}=\left\{\begin{array}{ll}
                 1,& (v_i,v_j)\in \mathcal{A};\\
                 0,&\textrm{otherwise}.\end{array}\right.
                 \end{equation}
Conversely, given an $n\times n$ 0-1 matrix $A=(a_{ij})$, we can define its digraph $D(A)=(\mathcal{V},\mathcal{A})$ on vertices $v_1,v_2,\ldots,v_n$ by (\ref{eqh1}), whose adjacency matrix is $A$.

Let $A=(a_{ij})$ and $B=(b_{ij})$ be matrices of order $m$ and $n$, respectively.  $A\otimes B=(a_{ij}B)$ is the tensor product of $A$ and $B$, whose order is $mn$. Denote by $J_{m,n}$ and $J_n$ the $m\times n$ and $n\times n$  matrices with all entries equal to one,
 $$T_n=\begin{bmatrix}
0&1&\cdots&1\\
&\ddots&\ddots&\vdots\\
&&0&1\\
&&&0
\end{bmatrix}$$
 the upper triangular tournament matrix of order $n$, and
 $$\Pi_{m, n}=J_m\otimes T_n=\begin{bmatrix}
T_{n}&\cdots&T_{n}\\
\vdots&\ddots&\vdots\\
T_{n}&\cdots&T_{n}
\end{bmatrix}.$$

Two matrices $A$ and $B$ are said to be permutation similar if there is a permutation matrix $P$ such that $B=PAP^T$, where $P^T$ denotes the transpose of $P$. For two digraphs $D_1$ and $D_2$,   $A_{D_1}$ and $A_{D_2}$ are permutation similar if and only if $D_1$ and $D_2$ are isomorphic.

A digraph of order $n$ is called a {\it transitive tournament} if its adjacency matrix is permutation similar to $T_n$.  Suppose $m$ and $t<n$ are nonnegative integers. We say a digraph of order $mn+t$ is an {\it $(m,n,t)$-completely transitive tournament} if its adjacency matrix is permutation similar to $\Pi_{m+1, n}(\alpha)$, where $\Pi_{m+1, n}(\alpha)$ is an $ (mn+t)\times (mn+t)$ principal submatrix of $\Pi_{m+1, n}$ with $\alpha\subseteq\{mn+1,mn+2,\ldots,mn+n\}$ and $|\alpha|=n-t$.  When $t=0$, we see that  a digraph of order $mn$  is an $(m,n,t)$-completely transitive  tournament if and only if its adjacency matrix is permutation similar to $\Pi_{m, n}$. Moreover, an $(m,n,t)$-completely transitive  tournament is a subgraph of the $(m+1,n,0)$-completely transitive  tournament.

Now we are ready to state our main result.

 \begin{theorem}\label{thh2}
 Let $n=sk+t$ with $s, k, t$ being nonnegative integers such that $t<k$. If $ n\ge k+4\ge 8$, then
 \begin{equation}\label{eq22}
 ex(n,\F)= {n\choose 2} -{s\choose 2}k-st.
  \end{equation}Moreover,  if $n\ge k+5\ge 10$,  then a digraph $D$ is in $Ex(n,\F)$ if and only if $D$ is an $(s,k,t)$-completely transitive tournament.
 \end{theorem}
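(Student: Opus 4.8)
The plan is to prove the two assertions of Theorem~\ref{thh2} in tandem by induction on $s$, extracting the extremal structure from the size bound. Throughout, identify a digraph $D$ of order $n$ with its adjacency matrix $A=A_D$, a $0$-$1$ matrix with zero diagonal, and recall that $D$ is $\F$-free exactly when for every ordered pair $(i,j)$ there is at most one walk of length $k$ from $i$ to $j$; equivalently, the $(i,j)$ entry of $A^k$ is $0$ or $1$. The base case is $s=1$, i.e. $k\le n\le 2k-1$; here the claimed bound reads $ex(n,\F)={n\choose2}-t = {n\choose 2}-(n-k)$, and the extremal digraph is a $(1,k,t)$-completely transitive tournament, which is a transitive tournament on $k$ vertices together with $n-k$ extra vertices each receiving all arcs into the "tail" of that tournament. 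I would first verify directly that such a digraph is $\F$-free (a walk of length $k$ forces one to traverse the whole transitive spine, pinning down all intermediate vertices) and has the stated size, and then prove that no $\F$-free digraph on $n\le 2k-1$ vertices can do better, and that equality forces the stated structure. This base step is really where the combinatorial heart of the problem lives.

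For the base case I would argue as follows. Since $D$ is $\F$-free, in particular it contains no $\F$-member on few vertices; for $k$ large this propagates strong local constraints. The key reduction: if $D$ has two arcs $(i,j)$ and $(j,i)$ (a $2$-cycle) then, because $n\le 2k-1$, one can build from this digon two distinct closed walks of length $k$ based at $i$ unless the rest of $D$ is extremely sparse — so I expect to show an extremal $D$ is a tournament-like object, i.e.\ $a_{ij}+a_{ji}\le 1$ for most pairs, hence $|\mathcal A|\le {n\choose 2}$ plus a controlled correction. Then, to get the sharper bound ${n\choose 2}-(n-k)$, I would use a walk-counting / eigenvalue-free argument: the number of walks of length $k$ is $\mathbf 1^T A^k \mathbf 1$, and $\F$-freeness bounds this by $n^2$; on the other hand if $D$ is an acyclic digraph (which I will have essentially reduced to) then the longest path has some length $\ell$, and a digraph with $n$ vertices and more than ${n\choose2}-(n-k)$ arcs must contain a transitive subtournament on $k+1$ vertices, which already yields two walks of length $k$ between its source and sink. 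This last implication — "many arcs forces a long transitive subtournament" — is a directed Turán-type statement and I anticipate it will require its own careful argument, likely again by induction on $n$ removing a vertex of small total degree.

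For the inductive step $s\ge 2$ (so $n\ge 2k$, and since $n\ge k+4\ge 8$ we have room to spare), take an extremal $\F$-free $D$ on $n=sk+t$ vertices. I would like to locate a vertex $v$ whose removal drops the count by exactly the right amount. The completely transitive tournament structure suggests that $D$ decomposes into "blocks" of size $k$ (and one of size $t$) linearly ordered, with complete digraph-free transitive tournaments inside consecutive block-pairs; so I will look for a vertex $v$ in the "first block" — formally, a vertex with in-degree $0$ or one lying on no long forward path — delete it, apply induction to $D-v$ on $n-1$ vertices (still in the admissible range since $n-1\ge k+3$, though for the extremal characterization I need $n-1\ge k+4$, which is why the characterization is stated only for $n\ge k+5$), and then show the arcs at $v$ number at most $n-1-(\text{something})$, with equality pinning $v$'s neighborhood to match the completely transitive pattern. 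Reassembling $D-v$'s forced structure with $v$'s forced neighborhood gives that $D$ itself is an $(s,k,t)$-completely transitive tournament.

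The main obstacle, I expect, is twofold: first, the base case directed-Turán bound "size $>{n\choose2}-(n-k)$ $\Rightarrow$ a transitive subtournament on $k+1$ vertices $\Rightarrow$ two walks of length $k$," which needs a genuinely new argument rather than a reduction; and second, in the inductive step, guaranteeing the existence of a "removable" vertex $v$ whose deletion both keeps us in the range where induction applies \emph{and} provably costs at most the budgeted number of arcs — a priori an extremal $D$ need not visibly contain a low-degree vertex, so I would argue by contradiction: if every vertex has total degree exceeding the budget, sum up to contradict the equality case of the size bound on $D$ itself, or else directly construct a forbidden pair of walks using the surplus arcs. Handling the small-order boundary cases $n=k+4$ (size only) and the slightly larger ones needed to launch the characterization will require the separate analysis promised for Section~3, which I would invoke as a black box.
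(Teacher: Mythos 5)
Your inductive step (use an averaging/degree-sum argument to find a vertex whose deletion leaves an extremal $\F$-free digraph on $n-1$ vertices, then invoke the induction hypothesis) is essentially the paper's strategy via Lemma \ref{le3}. But the base case, which you correctly identify as the heart of the matter, rests on a false implication. You claim that a digraph with too many arcs contains a transitive subtournament on $k+1$ vertices, ``which already yields two walks of length $k$ between its source and sink.'' It does not: in $T_{k+1}$ every walk is a strictly increasing path, so a walk of length $k$ from the source to the sink must visit all $k+1$ vertices in order and is therefore unique; indeed $T_{k+1}^k$ is a $0$-$1$ matrix, and Theorem \ref{thh1} (quoted from \cite{HZ1}) says precisely that transitive tournaments are the \emph{extremal} $\F$-free digraphs when $k\ge n-1$. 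So finding a transitive subtournament on $k+1$ vertices produces no forbidden configuration, and your upper bound for the whole range $s=1$, $k\le n\le 2k-1$, has no proof. Relatedly, your claimed base-case structure is wrong at the low end: for $n=k+2,k+3,k+4$ the extremal digraphs are \emph{not} $(1,k,t)$-completely transitive tournaments (they are $K_n$, $K_n'$, $F_n$, and $F_1(n),\dots,F_4(n)$ of Theorems \ref{le6}, \ref{leh6} and \ref{th9}), which is exactly why the characterization only begins at $n=k+5$ and why $n=k+5$ needs a separate argument (Lemma \ref{le14}) to rule out extensions of $F_1,F_2,F_3$. Black-boxing Section 3 covers $n\le k+4$, but leaves the remaining base-case range $k+5\le n\le 2k-1$ supported only by the broken transitive-subtournament argument.

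Two further soft spots. First, the digon reduction (``a $2$-cycle forces two closed walks of length $k$ unless $D$ is sparse'') is not substantiated: a single digon gives only one alternating closed walk of a given length, and ruling out symmetric pairs in the extremal digraphs is done in the paper through \cite[Lemma 1]{HZ1} and careful walk constructions, not by a sparsity dichotomy. Second, in the inductive step the real work is not locating the removable vertex (Lemma \ref{le3} does that automatically, since some $f(A(i_0))$ must meet the extremal count for $n-1$) but showing that the deleted vertex's row and column are forced into the completely transitive pattern; this occupies most of the paper's Section 4 (via Lemmas \ref{le4}, \ref{le7}, \ref{le10} and Corollary \ref{co1}) and is absent from your sketch. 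As it stands the proposal does not yield a proof.
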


We will also determine $Ex(n,\F)$ for $k\ge 4$ and $n=k+2,k+3,k+4$, while  $ex(n,\F)$ for $ n\le k+3$ and $Ex(n,\F)$ for $n\le k +1$ can be easily deduced from \cite{HZ1}.

From now on we deal with digraphs with no parallel arcs but allowing loops, and we use the same notations $\F$, $ex(n,\F)$ and $Ex(n,\F)$ for   digraphs allowing loops as for strict digraphs.  We will give solutions to Problem \ref{pro1} for   digraphs allowing loops. The same results for strict digraphs follow straightforward, since there is no loop in these extremal digraphs from $Ex(n,\F)$.

 \section{$ex(n,\F)$ and $Ex(n,\F)$ for $
 n\le k+4$}

For given integers $n$ and $k$, denote by $M_n\{0,1\}$ the set of $n\times n$ 0-1 matrices, $f(A)$ the number of ones in a 0-1 matrix $A$,
$$\Gamma(n,k)=\left\{A\in M_n\{0,1\}: A^k\in M_n\{0,1\}\right\},$$
 $$\theta(n,k)=\max_{A\in \Gamma(n,k)}f(A) \quad \textrm{and} \quad \Theta(n,k)=\left\{A\in \Gamma(n,k): f(A)=\theta(n,k)\right\}.$$
 Let $A\in M_n\{0,1\}$, $B$  an $m\times m$ principal submatrix of $A$, then it is clear that $A\in \Gamma(n,k)$ implies $B\in \Gamma(m,k)$. Moreover, given any $n\times n$ permutation matrix $P$, $A\in \Gamma(n,k)$ if and only if $P^TAP\in \Gamma(n,k)$.

To determine $\theta(n,k)$ and $\Theta(n,k)$ is an interesting problem posed by Zhan (see \cite[page 234]{ZH}), which has been partially solved by Wu \cite{WU}, Huang and Zhan \cite {HZ1}.

Given a digraph $D$, the $(i,j)$-entry of $(A_D)^k$ equals $t$ if and only if there are exactly $t$ distinct directed walks of length $k$ from vertex $v_i$ to vertex $v_j$ in $D$.
Hence, a digraph $D$ is $\F$-free if and only if its adjacency matrix $A_D$ is in $\Gamma(n,k)$. Moreover,
\begin{equation}\label{eq03}
\theta(n,k) =ex(n,\F).
\end{equation}
It should be noticed that (\ref{eq03}) is not necessarily true for strict digraphs.

For digraphs allowing loops, Huang and Zhan  determined $ex(n,\mathscr{F}_k)$ and $Ex(n,\mathscr{F}_k)$ for $k\ge n-1\ge 4$ as follows.
 \begin{theorem}[\cite{HZ1}]\label{thh1}
 Let $n,k$ be given integers such that
$k\ge n-1\ge 4.$ Then $ex(n,\F)=n(n-1)/2$ and a digraph $D$ is in $Ex(n,\F)$ if and only if $D$ is a transitive tournament of order $n$.
\end{theorem}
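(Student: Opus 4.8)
I would phrase everything in terms of $0$--$1$ matrices, using the dictionary recorded above: a digraph $D$ (loops allowed) is $\F$-free exactly when $A_D\in\Gamma(n,k)$, and $ex(n,\F)=\theta(n,k)$, while the members of $Ex(n,\F)$ correspond to the matrices in $\Theta(n,k)$. So it suffices to prove that, for $k\ge n-1\ge 4$, every $A\in\Gamma(n,k)$ has $f(A)\le\binom n2$, with equality only when $A$ is permutation similar to $T_n$. For the lower bound I would exhibit the transitive tournament: since $T_n$ is strictly upper triangular, a walk of length $n-1$ in $D(T_n)$ is a strictly increasing sequence of $n$ vertices, forcing $T_n^{\,n-1}$ to have its single nonzero entry, a $1$, in position $(1,n)$, and $T_n^{\,m}=0$ for $m\ge n$. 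Hence $T_n^{\,k}\in M_n\{0,1\}$ for every $k\ge n-1$, so $T_n\in\Gamma(n,k)$ and $\theta(n,k)\ge f(T_n)=\binom n2$.

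The upper bound splits according to whether $D=D(A)$ is acyclic, and the acyclic case is immediate and already settles the characterization on that side. A topological ordering of the vertices makes $A$ permutation similar to a strictly upper triangular matrix, whose nonzero entries lie among the $\binom n2$ superdiagonal positions; thus $f(A)\le\binom n2$, and equality forces every superdiagonal entry to equal $1$, i.e. $A$ is permutation similar to $T_n$. Since the unique maximal acyclic digraph is the transitive tournament and it lies in $\Gamma(n,k)$, nothing more is needed here.

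The whole difficulty is therefore concentrated in one claim: \emph{if $D$ contains a directed cycle then $f(A)\le\binom n2-1$}, so that no cyclic digraph is extremal. This is genuinely delicate, because $\F$-freeness does \emph{not} force acyclicity — a single directed cycle, or (for $n=5,\ k=4$) the $5$-cycle with one chord, lies in $\Gamma(n,k)$ — so the argument must show that each cycle is paid for by enough missing arcs. I would organize it through the condensation into strongly connected components $S_1,\dots,S_m$, ordered so that every inter-component arc points forward. Writing $f(A)=\sum_i e(S_i)+\sum_{i<j}e(S_i,S_j)$ and $\binom n2=\sum_i\binom{|S_i|}2+\sum_{i<j}|S_i||S_j|$, and using $e(S_i,S_j)\le|S_i||S_j|$, the claim reduces to
\[
\sum_i\Big(e(S_i)-\binom{|S_i|}2\Big)+\sum_{i<j}\Big(e(S_i,S_j)-|S_i||S_j|\Big)<0,
\]
which in turn follows if every \emph{nontrivial} component (one containing a cycle) contributes a strictly negative net deficit — its internal arcs together with the arcs incident to it falling strictly short of the transitive-tournament count on the same vertices — while trivial components contribute at most $0$.

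The main obstacle is proving this local deficit, and the mechanism is uniform: a cycle inside $S$ lets a walk \emph{wait} a variable number of steps, so whenever there are two routes — two closed walks through a common vertex of $S$ concatenated in different orders, or two distinct arcs entering or leaving $S$ together with the slack of circulating around the cycle — one can assemble two distinct walks of a common length between a fixed pair of endpoints, contradicting $A^k\in M_n\{0,1\}$. The delicate point is to realize these as walks of length \emph{exactly} $k$; this is where $k\ge n-1$ is essential, since every cycle has length at most $n\le k+1$, leaving room to pad a short pair of walks up to length $k$ by extra circulations around a cycle (padding only shifts the length by the cycle length, so matching the residue of $k$ is the technical heart, and is where the small-order restriction $n\ge5$ clears the finitely many sporadic ties). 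Carrying this out shows simultaneously that a strongly connected $\F$-free digraph has few internal arcs and that the only components with an internal surplus — a single loop or a digon, where $e(S)-\binom{|S|}2=1$ — are forced to shed at least one incident arc, making every nontrivial component's net contribution strictly negative. Summing over components gives $f(A)<\binom n2$ in the cyclic case; combined with the acyclic analysis and the lower bound, this yields $\theta(n,k)=n(n-1)/2$ and identifies $\Theta(n,k)$ with the matrices permutation similar to $T_n$. Equivalently, $ex(n,\F)=n(n-1)/2$ and $Ex(n,\F)$ consists precisely of the transitive tournaments of order $n$.
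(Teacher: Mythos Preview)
This theorem is not proved in the present paper; it is quoted from Huang and Zhan \cite{HZ1}, so there is no in-paper argument to compare against. I will therefore assess your proposal on its own terms.

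Your lower bound via $T_n$ and your treatment of the acyclic case are correct and clean. The cyclic case, however, is a strategy rather than a proof, and as written it contains a genuine gap in the accounting. You assert that a loop or a digon, each with internal surplus $e(S)-\binom{|S|}{2}=1$, is ``forced to shed at least one incident arc, making every nontrivial component's net contribution strictly negative.'' But shedding one cross-arc against a surplus of one yields net contribution~$0$, not a strictly negative number; you would need at least two. Worse, a $3$-cycle has $e(S)=3=\binom{3}{2}$, internal surplus zero, so to make its contribution strictly negative you must exhibit a missing cross-arc, and your argument never does this. Concretely, for $n=5$, $k=4$, the digraph consisting of a $3$-cycle on $\{1,2,3\}$ together with vertices $4,5$ and \emph{all} forward cross-arcs has exactly $\binom{5}{2}$ arcs; it is your job to show it is not $\F$-free, and the text supplies no such walk.

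There is also a structural issue with the decomposition: the deficits $e(S_i,S_j)-|S_i||S_j|$ are attached to \emph{pairs} of components, so ``the net contribution of a component'' is not well-defined without an explicit attribution rule, and you give none. Finally, you correctly flag that padding a short pair of walks up to length exactly $k$ by circulating around a cycle only moves the length in multiples of the cycle length, so ``matching the residue of $k$ is the technical heart'' --- but you then do not match it. This is precisely where the hypothesis $k\ge n-1\ge 4$ has to be used in a sharp way, and it does not come for free: with a single cycle of length $c$ and no auxiliary closed walk of coprime length, only residues modulo $c$ are reachable. The Huang--Zhan proof in \cite{HZ1} supplies the explicit walk constructions and the small-case analysis that your outline promises but does not deliver.
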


\noindent They also determine $ex(n, \mathscr{F}_{n-2})=n(n-1)/2-1$ for $n\ge 6$ and $ex(n, \mathscr{F}_{n-3})=n(n-1)/2-2$ for $n\ge 7$. Hence, when $k\ge 4$,
 $ex(n,\F)$ for $5\le n\le k+3$ and $Ex(n,\F)$   for $5\le n\le k+1$  have been determined.

In the following of this section, we will determine $ex(k+4,\F)$ and $Ex(n,\F)$   for $k\ge 4$ and $n=k+2,k+3,k+4$.
We need the following lemmas.

 \begin{lemma}\label{le3}
  Let $n \ge 3$, $p$ and $q$ be nonnegative integers such that $(p,q)\ne (0,0)$, and let $A\in M_{n}\{0,1\}$. If
 $$f(A(i))\le \frac{(n-1)(n-2)}{2}-p\frac{n-1}{2}-q \textrm{ for all } 1\le i\le n,$$ then
 \begin{equation}\label{eqle1}
 f(A)\le \frac{n(n-1)}{2}-p\frac{n+1}{2}-q-1.
 \end{equation}
 \end{lemma}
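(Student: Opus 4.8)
The plan is to exploit a counting (double-counting / averaging) argument over all $n$ principal submatrices $A(i)$, combined with a parity correction to sharpen the bound by the extra additive constant. First I would write $f(A)$ in terms of the row sums and the diagonal: if $r_i$ denotes the number of ones in row $i$ of $A$ and $c_i$ the number of ones in column $i$, then deleting the $i$-th row and column removes exactly $r_i + c_i - a_{ii}$ ones, so
\begin{equation*}
f(A(i)) = f(A) - r_i - c_i + a_{ii}.
\end{equation*}
Summing over $i = 1, \dots, n$ and using $\sum_i r_i = \sum_i c_i = f(A)$ gives $\sum_{i=1}^n f(A(i)) = (n-2) f(A) + \sum_i a_{ii}$. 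Since $0 \le \sum_i a_{ii} \le n$, the hypothesis $f(A(i)) \le \frac{(n-1)(n-2)}{2} - p\frac{n-1}{2} - q$ for every $i$ yields
\begin{equation*}
(n-2) f(A) \le n\left[\frac{(n-1)(n-2)}{2} - p\frac{n-1}{2} - q\right],
\end{equation*}
i.e. $f(A) \le \frac{n}{n-2}\left[\frac{(n-1)(n-2)}{2} - p\frac{n-1}{2} - q\right]$.

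Next I would simplify the right-hand side. Writing $\frac{n}{n-2} = 1 + \frac{2}{n-2}$ and carrying out the algebra, the leading terms give exactly $\frac{n(n-1)}{2} - p\frac{n+1}{2} - q$ plus a correction term of the form $-\frac{2q}{n-2} + \frac{(\text{something})}{n-2}$; more precisely one gets
\begin{equation*}
f(A) \le \frac{n(n-1)}{2} - p\frac{n+1}{2} - q - \frac{2q + p - n}{n-2}.
\end{equation*}
(I would double-check this constant by direct expansion.) Because $f(A)$ is an integer, it suffices to show the fractional correction term is strictly positive, or — when it is not — to recover the missing $-1$ by a separate argument. When $2q + p > n$ the correction is already $> 0$, so $f(A) \le \frac{n(n-1)}{2} - p\frac{n+1}{2} - q - 1$ follows immediately by integrality. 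The residual cases are $2q + p \le n$ (in particular small $p,q$), and here the bound from averaging is only $f(A) \le \frac{n(n-1)}{2} - p\frac{n+1}{2} - q$ or slightly weaker, so one more unit must be squeezed out.

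To get that last unit in the residual range I would look more carefully at when equality could hold in the averaging step: it forces $\sum_i a_{ii} = n$ (every diagonal entry is $1$) and $f(A(i))$ to meet its upper bound for every $i$ simultaneously. I would then argue that having all loops present, together with near-extremal principal submatrices, is incompatible — either it violates one of the $f(A(i))$ bounds after removing a well-chosen vertex, or a short parity/congruence check on $f(A)$ shows the target value $\frac{n(n-1)}{2} - p\frac{n+1}{2} - q$ cannot actually be attained, pushing $f(A)$ down by at least one. The hypothesis $(p,q)\neq(0,0)$ is what makes this rigidity argument bite: it guarantees the submatrix bound is a proper deficiency, so the extremal configuration cannot be the full transitive-tournament-type pattern on $n$ vertices. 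The main obstacle I anticipate is exactly this last step — handling the boundary cases where the crude averaging loses the additive $1$, and in particular ruling out the all-loops near-extremal configuration cleanly rather than through a long case analysis; the rest is routine manipulation.
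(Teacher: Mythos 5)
Your double-counting setup is exactly the paper's: summing $f(A(i)) = f(A) - r_i - c_i + a_{ii}$ over $i$ gives $(n-2)f(A) + d = \sum_{i=1}^n f(A(i))$ with $d=\sum_i a_{ii}$, and then one applies the hypothesis to each summand. The problem is the algebra that follows. Expanding $\frac{n}{n-2}\left[\frac{(n-1)(n-2)}{2}-p\frac{n-1}{2}-q\right]$ correctly, using $\frac{n}{n-2}\cdot\frac{n-1}{2}=\frac{n+1}{2}+\frac{1}{n-2}$ and $\frac{n}{n-2}\,q=q+\frac{2q}{n-2}$, yields
$$f(A)\le \frac{n(n-1)}{2}-p\frac{n+1}{2}-q-\frac{p+2q}{n-2}-\frac{d}{n-2},$$
not the correction $-\frac{2q+p-n}{n-2}$ you wrote down. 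Since $(p,q)\ne(0,0)$ forces $p+2q\ge 1$, the fractional deficit $\frac{p+2q}{n-2}$ is strictly positive for every $n\ge 3$, and integrality of $f(A)$ delivers the extra $-1$ immediately. There is no residual case.

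Consequently the entire second half of your plan --- the range $2q+p\le n$, the rigidity analysis of when the averaging is tight, the ``all loops present'' configuration, the parity/congruence check --- is chasing a gap created by the sign error, and as written it is also not a proof: ``I would then argue that \dots is incompatible'' and ``either \dots or \dots'' leave the decisive step entirely undone. Had the residual case been real, this sketch would not close it; since it is not real, the fix is simply to redo the expansion. (One genuine loose end, which the paper also passes over silently, is the step from a strict inequality to the $-1$ when $p\frac{n+1}{2}$ is not an integer; but in that situation the hypothesis bound $\frac{(n-1)(n-2)}{2}-p\frac{n-1}{2}-q$ is itself a half-integer, so each $f(A(i))$ sits at least $\tfrac12$ below it, and feeding that back through the same computation restores the conclusion.)
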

 \begin{proof}
Using the same idea as in the proof of \cite[Corollary 10]{HZ1}, we count the number of ones in the principal submatrices $A(1),\ldots,A(n)$.    Note that each diagonal entry of $A$ appears $n-1$ times and each off-diagonal entry of $A$ appears $n-2$ times in these submatrices. Suppose $A$ has $d$ nonzero diagonal entries.  Then
 \begin{eqnarray*}
 (n-1)d+(n-2)[f(A)-d]&=&\sum_{i=1}^nf(A(i))
  \le  n\left[\frac{(n-1)(n-2)}{2}-p\frac{n-1}{2}-q \right].
 \end{eqnarray*}
 It follows that
 $$f(A)\le \frac{n(n-1)}{2}-p\frac{n+1}{2}-q-\frac{p+2q}{n-2}-\frac{d}{n-2}.$$
Since $(p+2q)/(n-2)>0$, $d/(n-2)\ge 0$  and $f(A)$ is an integer, we get (\ref{eqle1}).
 \end{proof}
For the sake of convenience, we will always use $\{1,2,\ldots,n\}$ to denote the vertex set of a digraph $D$ of order $n$ and use the notation $i\rightarrow j$ to denote the arc $(i,j)$.
\begin{lemma}\label{le4}
 Let $m=k+t+s+1$ with $s\ge1$, $k\ge1$, $t\ge 3$ being integers, and let $x_{1},y_{1}\in \mathbb{R}^{k}$, $x_{2},y_{2}\in \mathbb{R}^{t}$, $x_{3},y_{3}\in \mathbb{R}^{s}$.  If
 $$
 (a_{ij})=\begin{bmatrix}
             0&J_{k,t}&J_{k,s}&x_{1}\\
             0&T_{t}&J_{t,s}&x_{2}\\
             0&0&0&x_{3}\\
             y_{1}^T&y_{2}^T&y_{3}^T&\alpha
\end{bmatrix}\in \Gamma(m,t+1)$$
and
$$\sum\limits_{i=1}^{3}[f(x_{i})+f(y_{i})]+\alpha\ge s+k+2,$$
  then
  $$\alpha=0,~~
  y_{1}=0, ~~x_{3}=0,\textrm{ and }
   a_{im}a_{mj}=0 \textrm{ for all }j\le i+2,  1\le i,j\le n.$$
 \end{lemma}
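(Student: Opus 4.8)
The plan is to exploit the block structure together with the hypothesis that the matrix lies in $\Gamma(m, t+1)$, i.e. that $(a_{ij})^{t+1}$ is a $0$-$1$ matrix, and that the total ``mass'' in the last row, last column, and the $(m,m)$-entry is large. Write $D$ for the digraph on vertex set $\{1,\dots,m\}$ with this adjacency matrix, and partition $\{1,\dots,k\}=:K$, $\{k+1,\dots,k+t\}=:P$, $\{k+t+1,\dots,k+t+s\}=:S$, and the single vertex $m$. The displayed block matrix says: $K$ dominates everything in $P\cup S$, $P$ carries a transitive tournament $T_t$ and dominates $S$, $S$ has no internal arcs and no arcs into $K\cup P$, and vertex $m$ is connected to the rest only through $x_1,x_2,x_3$ (arcs out of $m$) and $y_1,y_2,y_3$ (arcs into $m$), with $\alpha=a_{mm}$ a possible loop.

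First I would argue $\alpha=0$: a loop at $m$ produces, for every vertex $v$ with a walk of length $t$ to $m$, two distinct walks of length $t+1$ from $v$ to $m$ (go around the loop early or late) as soon as there are at least two walks of length $\le t$ reaching $m$ — and the bipartite-to-$P$ structure plus $|P|=t$ furnishes long walks $K\to P\to P\to\cdots\to P\to m$ of many lengths, forcing a contradiction with $(a_{ij})^{t+1}$ being $0$-$1$. Next, $x_3=0$: an arc $m\to j$ with $j\in S$ together with any two distinct walks of length $t$ from some common start $u$ into $m$ would give two walks of length $t+1$ from $u$ to $j$; one shows two such length-$t$ walks into $m$ must exist once the mass bound $\sum[f(x_i)+f(y_i)]+\alpha\ge s+k+2$ is in force, because that bound forces enough arcs into $m$ from $K\cup P$ (the only vertices that can start long walks) to create multiplicity. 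Similarly $y_1=0$: an arc $i\to m$ with $i\in K$ creates a walk segment that, composed with the dense $K\to(P\cup S)$ block and the already-forced structure, again duplicates a length-$(t+1)$ walk. The final claim $a_{im}a_{mj}=0$ for all $j\le i+2$ is the assertion that $m$ has no ``short return'' arcs: if $i\to m\to j$ with $j$ only a little after $i$ in the natural ordering $1,2,\dots,m$, then the transitive/dense structure already supplies a second walk of length $t+1$ from $i$ to $j$ of the form $i\to(\text{step inside }K\cup P)\to\cdots\to j$, contradicting the $0$-$1$ condition on $(a_{ij})^{t+1}$; here the constraint $j\le i+2$ is exactly what guarantees there is room for such an alternate walk of the right length $t+1$ inside the block structure.

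Concretely, the key steps in order are: (1) unpack the block form into adjacency statements and fix the vertex ordering; (2) observe that $S$ is a ``sink layer'' and $K$ a ``source layer'', so any walk of length $t+1$ ending at $m$ or at an $S$-vertex has its early vertices constrained, which localizes where multiplicity can occur; (3) kill the loop $\alpha$; (4) use the mass hypothesis $\sum[f(x_i)+f(y_i)]+\alpha\ge s+k+2$ to show the number of arcs into $m$ from $K\cup P$ is large enough that some vertex reaches $m$ by two distinct length-$t$ walks; (5) with that multiplicity in hand, successively rule out $x_3\ne 0$ and $y_1\ne 0$ by extending a duplicated length-$t$ walk by one arc out of $m$; (6) finally, for the ``short return'' claim, show that if $a_{im}=a_{mj}=1$ with $j\le i+2$ then the dense blocks $J_{k,t},J_{k,s},T_t,J_{t,s}$ contain an alternate $i\to j$ walk of length exactly $t+1$, contradicting $\Gamma(m,t+1)$.

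I expect step (4) — extracting, from the single scalar inequality $\sum[f(x_i)+f(y_i)]+\alpha\ge s+k+2$, a vertex with two distinct length-$t$ walks into $m$ — to be the main obstacle, since one must carefully account for how arcs of $y_2$ (into $m$ from $P$) interact with the transitive tournament $T_t$ to build length-$t$ walks, and separate the ``cheap'' contributions (arcs $y_1$ from $K$, which are the most efficient at creating long walks and will end up forced to vanish) from the ``expensive'' ones; a counting argument comparing the available mass $s+k+2$ against the capacity of the configurations that avoid multiplicity should close it, but it is the delicate part. The loop-elimination and short-return steps are more routine once the walk-counting bookkeeping is set up.
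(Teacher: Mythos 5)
There is a genuine gap, and it starts with a misreading of the block structure. In the displayed matrix the vectors $x_1,x_2,x_3$ occupy the last \emph{column}, so they record arcs \emph{into} $m$ (from $K$, $P$, $S$ respectively), while $y_1,y_2,y_3$ occupy the last \emph{row} and record arcs \emph{out of} $m$; you state the opposite convention. This is not merely notational: the blocks that must vanish are $x_3$ (arcs $S\to m$) and $y_1$ (arcs $m\to K$), and your sketches of these two steps argue about the wrong arcs. More seriously, your whole plan for killing $x_3$ and $y_1$ rests on step (4): extracting from the mass hypothesis a vertex with two distinct walks of length $t$ into $m$, and then extending by one arc out of $m$. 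You flag this as the main obstacle, and it is in fact not closable as stated: the hypothesis $\sum_i[f(x_i)+f(y_i)]+\alpha\ge s+k+2$ is symmetric between in-arcs and out-arcs of $m$, so it cannot force the in-arcs specifically to be numerous; the admissible extremal configurations (see Corollary \ref{co6}) include ones in which $m$ has \emph{no} in-arcs at all, so no such duplicated length-$t$ walk into $m$ need exist. An argument routed through that intermediate claim cannot succeed.

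The paper's proof is both simpler and structured differently. The steps $x_3=0$ and $y_1=0$ need no counting and no mass hypothesis at all: a single arc $i\to m$ with $i\in S$ already produces two distinct walks of length $t+1$ from $k$ to $m$, namely $k\to k+1\to k+3\to\cdots\to k+t\to i\to m$ and $k\to k+2\to k+3\to\cdots\to k+t\to i\to m$, because the transitive tournament $T_t$ allows the second step to be rerouted; dually, a single arc $m\to j$ with $j\in K$ gives two walks of length $t+1$ from $m$ to $k+t+1$. The mass hypothesis is used only afterwards, to eliminate the loop $\alpha$: if $\alpha=1$ the remaining mass forces either two arcs into $m$ or two arcs out of $m$, and inserting the loop at different positions of a walk that dwells at $m$ gives the contradiction. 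Your step (6) on the short-return condition $a_{im}a_{mj}=0$ for $j\le i+2$ is the right idea (an alternate $i\to j$ walk of length $t+1$ through the dense blocks), but it requires the case analysis on the location of $i$ that the paper carries out and that your sketch omits. To repair the proposal: correct the direction convention, replace step (4) entirely by the direct rerouting argument inside $T_t$, and reserve the mass hypothesis for the loop.
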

 \begin{proof}
 Denote   $A=(a_{ij})$.
First  we claim that $x_{3}=0$ and $y_{1}=0$. Otherwise suppose $x_3\ne 0$ or $y_1\ne 0$. Then $a_{im}=1$ for some $i\in \{k+t+1,\ldots,k+t+s$\} or $a_{mj}=1$ for some $j\in \{1,2,\ldots,k\}$. It follows that   $D(A)$  has two distinct walks  of length $t+1$ from $k$ to $m$ or from $m$ to $k+t+1$:
$$\left\{\begin{array}{l}
 k\rightarrow k+1 \rightarrow k+3 \rightarrow\cdots\rightarrow k+t\rightarrow i\rightarrow m,\\
 k\rightarrow k+2 \rightarrow k+3 \rightarrow\cdots\rightarrow k+t\rightarrow i\rightarrow m,\\
\end{array}\right.$$
$$\left\{\begin{array}{l}
 m\rightarrow j\rightarrow k+1\rightarrow k+3\rightarrow k+4\rightarrow \cdots\rightarrow k+t+1,\\
 m\rightarrow j\rightarrow k+1\rightarrow k+2\rightarrow k+4\rightarrow \cdots\rightarrow k+t+1,\\
\end{array}\right.$$
which contradicts  $A\in \Gamma(m,t+1)$.
Hence,  $x_{3}$ and $y_{1}$ are zero vectors.

Next we assert that $\alpha=0$. Otherwise,  $\alpha=1$.  Since $$\sum\limits_{i=1}^{3}(f(x_{i})+f(y_{i}))\ge s+k+1,$$ we have either
 $$\textrm{}\sum\limits_{i=1}^{3}f(x_{i})\ge k+1 \textrm{ or } \sum\limits_{i=1}^{3}f(y_{i})\ge s+1.$$ If $\sum\limits_{i=1}^{3}f(x_{i})\ge k+1$, then the last column of $A$ has at least two nonzero entries $a_{im}=a_{jm}=1$ with $1\le i<j\le k+t$. Hence  $D(A)$  has the following two distinct walks of length $t+1$ from $i$ to $m$:
$$\left\{\begin{array}{l}
 i\rightarrow m\rightarrow m\rightarrow m\rightarrow\cdots\rightarrow m,\\
 i\rightarrow j\rightarrow m\rightarrow m\rightarrow\cdots\rightarrow m.
\end{array}\right.$$
If $\sum\limits_{i=1}^{3}f(y_{i})\ge s+1$, then the last row of $A$ has at least two nonzero entries $a_{mi}=a_{mj}=1$ with $k+1\le i<j\le m$ and $i\le k+t$. It follows that $D(A)$ has the following two distinct walks of length $t+1$ from $m$ to $j$:
$$\left\{\begin{array}{l}
 m\rightarrow m\rightarrow \cdots\rightarrow m\rightarrow i\rightarrow j, \\
 m\rightarrow m\rightarrow   \cdots\rightarrow m\rightarrow m\rightarrow j.
\end{array}\right.$$
In both cases we get contradictions. Therefore, $\alpha=0$.

Next we claim $a_{im}a_{mj}=0$ for $j\le i+2$.  Otherwise suppose $a_{im}=a_{mj}=1$ with $1\le i,j\le m-1$ and $j\le i+2$. Since $x_{3}=y_{1}=0$, we have $i\le k+t$ and $j\ge k+1$.
We distinguish the following cases to find two distinct walks of length $t+1$ with the same endpoints in $D(A)$, which contradicts $A\in \Gamma(m,t+1)$.
If $i\le k$, then $j\le k+2$ and $D(A)$ has
$$\left\{\begin{array}{l}
 i\rightarrow k+1\rightarrow k+2\rightarrow \cdots\rightarrow k+t+1 ,\\
i\rightarrow m\rightarrow j\rightarrow k+3\rightarrow \cdots\rightarrow k+t+1.
\end{array}\right.$$
If $k<i\le k+t-2$, then $j\le k+t$ and $D(A)$ has
$$\left\{\begin{array}{l}
 k\rightarrow k+1\rightarrow k+2\rightarrow \cdots\rightarrow k+t\rightarrow k+t+1,\\
 k\rightarrow k+1\rightarrow
  \cdots\rightarrow i\rightarrow m\rightarrow j\rightarrow i+3\rightarrow \cdots\rightarrow k+t+1.
\end{array}\right.$$
If $i= k+t-1$, then $j\le k+t+1$ and $D(A)$ has
\begin{equation*}
\left\{\begin{array}{l}
 k\rightarrow k+1\rightarrow k+2\rightarrow \cdots\rightarrow i\rightarrow k+t\rightarrow k+t+1,\\
 k\rightarrow k+1\rightarrow k+2\rightarrow \cdots\rightarrow i\rightarrow m\rightarrow k+t+1,\texttt{ \it  if } j=k+t+1,\\
 k\rightarrow k+2\rightarrow \cdots\rightarrow i\rightarrow m\rightarrow j\rightarrow\cdots\rightarrow   k+t+1, \texttt{ \it if } j\le k+t.
\end{array}\right.
\end{equation*}
If $i= k+t$,  then $j\le k+t+2$ and $D(A)$ has
$$\left\{\begin{array}{l}
 k\rightarrow k+2\rightarrow k+3\rightarrow \cdots\rightarrow i\rightarrow m\rightarrow j,\\
 k\rightarrow k+1\rightarrow k+3\rightarrow \cdots\rightarrow i\rightarrow m\rightarrow j.
\end{array}\right.$$
 Therefore,  $a_{im}a_{mj}=0$ for all $j\le i+2$.
\end{proof}

\begin{corollary}\label{co6}
Let $x,y\in \mathbb{R}^{n-1}$ with $n\ge 6$. If
\begin{equation}\label{eqh2}
 \begin{bmatrix}
T_{n-1}&x\\y^T&\beta
\end{bmatrix}\in \Gamma(n,n-2),
\end{equation}
 and
 $$f(x)+f(y)+\beta=n-2,$$
 then one of the following holds:
\begin{itemize}
\item[(1)] $x=( 1,\ldots,1,0)^T,y=0$ and $\beta=0$;
\item[(2)] $y=(0,1,\ldots,1 )^T,x=0$ and $\beta=0$.
\end{itemize}
\end{corollary}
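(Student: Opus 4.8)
The plan is to recognize that the matrix in (\ref{eqh2}) is literally an instance of the matrix treated in Lemma~\ref{le4}, and then to finish with a short counting argument. Write $x=(x_1,\dots,x_{n-1})^T$ and $y=(y_1,\dots,y_{n-1})^T$ for the coordinate entries; since $\Gamma(n,n-2)\subseteq M_n\{0,1\}$, these are $0$--$1$ vectors and $\beta\in\{0,1\}$. Split the index set $\{1,\dots,n-1\}$ of $T_{n-1}$ into the three \emph{consecutive} blocks $\{1\}$, $\{2,\dots,n-2\}$, $\{n-1\}$, of sizes $1$, $n-3$, $1$. Because $T_{n-1}$ is the strictly upper triangular all-ones matrix, vertex $1$ is a source and vertex $n-1$ a sink, so with respect to this partition $T_{n-1}$ takes exactly the shape
\[
\begin{bmatrix}
0 & J_{1,n-3} & J_{1,1}\\
0 & T_{n-3} & J_{n-3,1}\\
0 & 0 & 0
\end{bmatrix},
\]
and hence the whole matrix in (\ref{eqh2}), with the last row and column adjoined, is precisely the matrix $(a_{ij})$ of Lemma~\ref{le4} for the parameters $k=1$, $t=n-3$, $s=1$, $m=n$, where $x_1,x_2,x_3$ (resp. $y_1,y_2,y_3$) are the restrictions of $x$ (resp. $y$) to the three blocks and $\alpha=\beta$. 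No permutation is needed, as the blocks are already in order.

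Next I would verify the hypotheses of Lemma~\ref{le4}. The requirement $t\ge 3$ is $n-3\ge 3$, i.e. $n\ge 6$, which is assumed, while $s,k\ge 1$ is clear. Moreover $\sum_{i=1}^{3}[f(x_i)+f(y_i)]+\alpha=f(x)+f(y)+\beta=n-2$, and the threshold $s+k+2=4$ satisfies $4\le n-2$ because $n\ge 6$. So Lemma~\ref{le4} applies and gives $\beta=0$, $y_1=0$, $x_{n-1}=0$, together with $a_{in}a_{nj}=0$ whenever $j\le i+2$. Reading this off through $a_{in}=x_i$ and $a_{nj}=y_j$ for $i,j\le n-1$ (the cases $i=n$ or $j=n$ are vacuous since $a_{nn}=\beta=0$), the last assertion says: whenever $x_i=1$ and $y_j=1$ one must have $j\ge i+3$.

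Finally I would force one of $x,y$ to vanish by counting. Suppose both are nonzero; let $p$ be the largest index with $x_p=1$ and $q$ the smallest index with $y_q=1$, so $q\ge p+3$. The support of $x$ lies in $\{1,\dots,p\}$ and that of $y$ in $\{q,\dots,n-1\}$, whence $f(x)+f(y)\le p+(n-q)\le p+(n-p-3)=n-3$, contradicting $f(x)+f(y)=n-2$ (recall $\beta=0$). Hence $x=0$ or $y=0$, and since $f(x)+f(y)=n-2>0$ not both are zero. If $y=0$ then $f(x)=n-2$; since the support of $x$ is contained in $\{1,\dots,n-2\}$ by $x_{n-1}=0$, a set of exactly $n-2$ indices, we get $x=(1,\dots,1,0)^T$, which is case~(1). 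Symmetrically, using $y_1=0$, the case $x=0$ yields $y=(0,1,\dots,1)^T$, case~(2).

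I do not expect a genuine obstacle here. The one step that carries all the content is the first: observing that "a transitive tournament with one extra vertex attached" is exactly the configuration of Lemma~\ref{le4} once one chooses $k=s=1$ and $t=n-3$. After that, it is only the bookkeeping of the block decomposition (and the identifications $\alpha=\beta$, $\sum f(x_i)+\sum f(y_i)=f(x)+f(y)$) together with the trivial inequality $n-3<n-2$ and two one-line support counts.
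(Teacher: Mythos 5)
Your proposal is correct and follows essentially the same route as the paper: both apply Lemma \ref{le4} with $k=s=1$ (and $t=n-3$) to get $\beta=0$, the vanishing of the first entry of $y$ and the last entry of $x$, and the condition $a_{in}a_{nj}=0$ for $j\le i+2$, and then both rule out $x,y$ being simultaneously nonzero by comparing the supports of $x$ and $y$ against the total count $f(x)+f(y)=n-2$. The only difference is cosmetic — the paper derives $j_0-i_0\le 2$ from the count and contradicts the lemma's product condition, while you derive $q\ge p+3$ from the lemma and contradict the count — which is the same argument read in the other direction.
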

\begin{proof}
 Denote the matrix in (\ref{eqh2}) by $A=(a_{ij})$. Applying Lemma \ref{le4} with $k=s=1$, we have
\begin{equation}\label{eqh3}
\beta=a_{n1}=a_{n-1,n}=0,\textrm{ and } a_{in}a_{nj}=0 \textrm{ for all }j\le i+2.
\end{equation}

  We assert $f(x)=0$ or $f(y)=0$. Otherwise, assume that $a_{i_0n}$ is the last nonzero component in $x$, and $a_{nj_0}$ is the first nonzero component in $y$. Since $f(x)+f(y)=n-2\le i_0+n-1-(j_0-1)$, we have $j_0-i_0\le 2$, and $a_{i_0n}a_{nj_0}=0$ follows from (\ref{eqh3}), which contradicts the assumption that $a_{i_0n}a_{nj_0}=1$. Therefore, $x=0$ or $y=0$. It follows that either (1) or (2) holds.
 \end{proof}

Now we are ready to characterize $Ex(k+2,\F)$ and $Ex(k+3,\F)$ for $k\ge 4$.

 \begin{theorem}\label{le6}
 Let $n\ge 6$ be an integer. Then
 \begin{equation}\label{eqh4}
 ex(n,\mathscr{F}_{n-2})=\frac{n(n-1)}{2}-1.
 \end{equation}
 Moreover, a digraph $D$ is in $Ex(n,\mathscr{F}_{n-2})$ if and only if $A_D$ is permutation similar to \begin{equation*}\label{eq2}
 K_n\equiv\begin{bmatrix}
 T_{n-2}&J_{n-2,2}\\0&0
 \end{bmatrix}\textrm{\quad or \quad}
 K'_n\equiv\begin{bmatrix}
 0&J_{2,n-2}\\0&T_{n-2}
 \end{bmatrix}.
 \end{equation*}

 \end{theorem}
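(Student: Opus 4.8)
The plan is to prove the upper bound in \eqref{eqh4} via the inductive/counting machinery already set up, then extract the extremal structure from the equality case. For the upper bound, set $k=n-2$ and note that any $D\in Ex(n,\mathscr{F}_{n-2})$ has adjacency matrix $A\in\Gamma(n,n-2)$. By Theorem \ref{thh1} (applied with $n-1$ in place of $n$, since $k=n-2\ge (n-1)-1$), every principal submatrix $A(i)$ of order $n-1$ satisfies $f(A(i))\le \binom{n-1}{2}$. I want to do slightly better: I claim $f(A(i))\le\binom{n-1}{2}-1$ cannot be forced for all $i$ directly, so instead I apply Lemma \ref{le3} with $p=0$, $q=1$ — but this requires $f(A(i))\le\binom{n-1}{2}-1$ for all $i$, which need not hold. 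The cleaner route is: first show $f(A)\le\binom n2$ is impossible, i.e. $A$ cannot be (permutation similar to) a transitive tournament $T_n$, because $T_n\notin\Gamma(n,n-2)$ for $n\ge 6$ — indeed $T_n^{\,n-2}$ has a $2$ in the $(1,n)$ entry (there are two walks of length $n-2$ from $1$ to $n$ in the transitive tournament once $n\ge 6$: one can check $(T_n^{\,n-2})_{1n}=\binom{n-2}{n-3}=n-2\ge 4$). Hence $f(A)\le\binom n2-1$, giving \eqref{eqh4} provided we exhibit one matrix in $\Gamma(n,n-2)$ with $\binom n2-1$ ones; the matrices $K_n$ and $K'_n$ serve this purpose, and a direct computation shows $K_n^{\,n-2}=0$ (the longest walk in $D(K_n)$ has length $n-2$ but must end at one of the last two vertices, which are sinks, so no walk of length $n-2$ revisits; more carefully, $K_n^{\,n-1}=0$ and one checks $K_n^{\,n-2}\in M_n\{0,1\}$). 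Thus $ex(n,\mathscr{F}_{n-2})\ge\binom n2-1$, and combined with the upper bound we get equality.

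For the characterization, let $D\in Ex(n,\mathscr{F}_{n-2})$, so $f(A)=\binom n2-1$ and $A\in\Gamma(n,n-2)$. Since $f(A)=\binom n2-1>\binom{n-1}{2}+ (n-2) - \lfloor\cdot\rfloor$, an averaging argument over the $n$ principal submatrices $A(i)$ forces most of them to have $f(A(i))=\binom{n-1}{2}$, hence (by Theorem \ref{thh1}) to be transitive tournaments of order $n-1$. Concretely, with $d$ the number of nonzero diagonal entries, counting ones in $A(1),\dots,A(n)$ gives $(n-2)f(A)+d=\sum_i f(A(i))\le n\binom{n-1}{2}$, so $\sum_i\bigl(\binom{n-1}{2}-f(A(i))\bigr)= n\binom{n-1}{2}-(n-2)f(A)-d = 2(n-2)\cdot\frac12\cdot\text{(something)} $ — the point is this sum equals $n-2+(\text{correction})$ and is small, forcing at least one $A(i)$, say $A(n)$ after relabeling, to be a transitive tournament; up to permutation similarity $A(n)=T_{n-1}$. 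Then $A$ has the form $\begin{bmatrix}T_{n-1}&x\\ y^T&\beta\end{bmatrix}$ (the last vertex can always be moved into position $n$) with $f(x)+f(y)+\beta = f(A)-f(T_{n-1}) = \binom n2-1-\binom{n-1}{2} = n-2$. Now Corollary \ref{co6} applies verbatim (valid since $n\ge 6$) and yields exactly the two possibilities: $x=(1,\dots,1,0)^T$, $y=0$, $\beta=0$, giving $A=K_n$; or $y=(0,1,\dots,1)^T$, $x=0$, $\beta=0$, giving $A=K'_n$.

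The main obstacle I anticipate is the averaging step that produces a transitive-tournament principal submatrix of order $n-1$: I need to be careful that the slack $\sum_i(\binom{n-1}{2}-f(A(i)))$ is strictly smaller than $n$, so that not every $A(i)$ can be deficient, and to handle the diagonal term $d$ (loops) correctly — Lemma \ref{le3} is designed for exactly this, but here $p=q=0$ is the borderline forbidden case, so I cannot quote it and must redo the count by hand, using the sharper input $f(A(i))\le\binom{n-1}{2}$ together with the fact established above that $f(A(i))=\binom{n-1}{2}$ forces $A(i)$ transitive (hence $A(i)$ has zero diagonal, which in turn constrains $d$). Once one good submatrix is found and put in the form \eqref{eqh2}, Corollary \ref{co6} does all the remaining work, and verifying $K_n,K_n'\in\Gamma(n,n-2)$ together with $f(K_n)=f(K_n')=\binom n2-1$ is a routine matrix-power check that I would state but not belabor.
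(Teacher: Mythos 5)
Your characterization argument is, in substance, the paper's own proof: locate one order-$(n-1)$ principal submatrix with $\binom{n-1}{2}$ ones, invoke Theorem \ref{thh1} to make it $T_{n-1}$, write $A=\bigl[\begin{smallmatrix}T_{n-1}&x\\ y^T&\beta\end{smallmatrix}\bigr]$ with $f(x)+f(y)+\beta=n-2$, and finish with Corollary \ref{co6}. Your worry about Lemma \ref{le3} is misplaced: you do not need the case $p=q=0$. The paper applies the lemma in the contrapositive with $p=0$, $q=1$: if every $A(i)$ had $f(A(i))\le\binom{n-1}{2}-1$, then $f(A)\le\binom{n}{2}-2$, contradicting $f(A)=\binom{n}{2}-1$; hence some $A(i)$ has $f(A(i))\ge\binom{n-1}{2}$, and Theorem \ref{thh1} forces equality and transitivity. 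Your hand-rolled count reaches the same place, so this part is fine, just an unnecessary detour. (Minor point: in case (2) the resulting matrix is permutation similar to $K'_n$, not equal to it; the paper conjugates by the cyclic shift.)

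The genuine gap is in your derivation of the upper bound $ex(n,\mathscr{F}_{n-2})\le\binom{n}{2}-1$. You argue that $f(A)=\binom{n}{2}$ is impossible ``i.e.\ $A$ cannot be permutation similar to $T_n$,'' but the equivalence between $f(A)=\binom{n}{2}$ and $A\cong T_n$ is exactly what is unproved: Theorem \ref{thh1} gives that equivalence only for $k\ge n-1$, and here $k=n-2$; moreover $A\in\Gamma(n,n-2)$ does not imply $A\in\Gamma(n,n-1)$, so you cannot apply Theorem \ref{thh1} to $A$ itself. The step can be repaired: the counting identity $(n-2)f(A)+d=\sum_i f(A(i))\le n\binom{n-1}{2}$ gives $f(A)\le\binom{n}{2}$, and if equality held then $d=0$ and every $A(i)$ would be a transitive tournament, which (for $n\ge4$: any $3$-cycle would survive in some $A(i)$) forces $A\cong T_n$, contradicting $(T_n^{\,n-2})_{1n}=n-2\ge2$. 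But as written this intermediate deduction is asserted, not proved. The paper sidesteps the whole issue by quoting the value of $ex(n,\mathscr{F}_{n-2})$ from \cite[Corollary 10]{HZ1}; you should either do the same or supply the missing implication.
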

 \begin{proof}
 By   \cite[Corollary 10]{HZ1} we get (\ref{eqh4}). Suppose $D$ is a digraph in $Ex(n,\mathscr{F}_{n-2})$ and $A\equiv A_D$.  Applying Lemma \ref{le3}, there exists some $i\in\{1,2,\ldots,n\}$ such that  $f(A(i))\ge \frac{(n-1)(n-2)}{2}$. Since $A(i)\in \Gamma(n-1,n-2)$, applying Theorem \ref{thh1} we get $f(A(i))=\frac{(n-1)(n-2)}{2}$ and  $A(i)$ is permutation similar to $T_{n-1}$. Using permutation similarity if necessary, without loss of generality we assume $i=n$ and
$$A=\begin{bmatrix}
T_{n-1}&x\\y^T&\alpha
\end{bmatrix} $$
with $x,y\in \mathbb{R}^{n-1}$. It follows that

$$f(x)+f(y)+\alpha=f(A)-f(A(n))=ex(n,\mathscr{F}_{n-2})-\frac{(n-1)(n-2)}{2}=n-2.$$

\noindent Applying Corollary \ref{co6},  one of the following holds.
\begin{itemize}
\item[(1)] $x=( 1,\ldots,1,0)^T,y=0$ and $\alpha=0.$  Then $A=K_n$;
\item[(2)] $y=(0,1,\ldots,1 )^T,x=0$ and $\alpha=0.$  Then $PAP^T=K'_n$,  where
$$P=\begin{bmatrix}
 0&1\\
 I_{n-1}&0
 \end{bmatrix}.$$
\end{itemize}
Therefore, $A_D$ is permutation similar to $K_n$ or $K_n'$.

 Conversely, if the adjacency matrix  $A$ of a digraph $D$ is permutation similar to $K_n$ or $K'_n$, by direct computation we can verify  $f(A)=ex(n, \mathscr{F}_{n-2})$ and $A^{n-2}\in M_{n}\{0,1\}$. Hence $D\in  Ex(n,\mathscr{F}_{n-2})$.
 \end{proof}

\begin{theorem}\label{leh6}
 Let $n\ge 7$ be an integer. Then
 \begin{equation}\label{eqh5}
 ex(n,\mathscr{F}_{n-3})=\frac{n(n-1)}{2}-2.
 \end{equation}
 Moreover, a digraph $D$ is in $Ex(n,\mathscr{F}_{n-3})$ if and only if $A_D$ is permutation similar to
 $$
F_n\equiv\begin{bmatrix}
             0&J_{2,n-4}&J_{2,2}\\
             0&T_{n-4}&J_{n-4,2}\\
             0&0&0
\end{bmatrix}.$$
 \end{theorem}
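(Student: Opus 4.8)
The plan is to imitate the proof of Theorem~\ref{le6}, bootstrapping from that result. Equation~(\ref{eqh5}) is already on record (it follows from \cite{HZ1}, as noted above), so only the characterization of $Ex(n,\mathscr{F}_{n-3})$ needs work. Let $D\in Ex(n,\mathscr{F}_{n-3})$ and write $A=A_D\in\Gamma(n,n-3)$, so $f(A)=\tfrac{n(n-1)}{2}-2$. The contrapositive of Lemma~\ref{le3} applied with $p=0$, $q=2$ produces an index $i$ with $f(A(i))\ge\tfrac{(n-1)(n-2)}{2}-1$. Since $A(i)\in\Gamma(n-1,n-3)=\Gamma(n-1,(n-1)-2)$ and $n-1\ge 6$, Theorem~\ref{le6} forces $f(A(i))=\tfrac{(n-1)(n-2)}{2}-1$ and $A(i)$ permutation similar to $K_{n-1}$ or to $K'_{n-1}$.

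Next I would cut the work to a single case. The property of lying in $Ex(n,\mathscr{F}_k)$ is invariant under reversing all arcs, i.e.\ under $A\mapsto A^T$; moreover the reversal permutation turns $T_m^T$ into $T_m$, from which one checks $(K'_{n-1})^T$ is permutation similar to $K_{n-1}$ and $F_n^T$ is permutation similar to $F_n$. Hence it suffices to treat $A(i)\cong K_{n-1}$, the other case following by passing to $A^T$. So, after a permutation similarity, assume $i=n$ and $A(n)=K_{n-1}$. Splitting off the first row and column of the block $T_{n-3}$ exhibits $K_{n-1}$—and therefore $A$—in exactly the block shape required by Lemma~\ref{le4} with $k=1$, $t=n-4$, $s=2$, $m=n$. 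Since $\sum_{j=1}^{3}[f(x_j)+f(y_j)]+\beta=f(A)-f(A(n))=n-2\ge 5=s+k+2$ (here $n\ge 7$) and $A\in\Gamma(n,n-3)$, Lemma~\ref{le4} gives $\beta=0$, $a_{n1}=0$, $a_{n-2,n}=a_{n-1,n}=0$, and $a_{in}a_{nj}=0$ whenever $j\le i+2$.

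The combinatorial core is then short. Set $I=\{i:a_{in}=1\}$ and $J=\{j:a_{nj}=1\}$; by the conclusions just obtained $I\subseteq\{1,\dots,n-3\}$, $J\subseteq\{2,\dots,n-1\}$, and $|I|+|J|=n-2$. If $I\neq\emptyset$, put $i_0=\max I\le n-3$; from $a_{i_0,n}=1$ the vanishing condition forces $a_{nj}=0$ for all $j\le i_0+2$, hence $J\subseteq\{i_0+3,\dots,n-1\}$ and $|I|+|J|\le i_0+(n-i_0-3)=n-3$, a contradiction. Therefore $I=\emptyset$ (the last column of $A$ is zero) and $|J|=n-2$ forces $J=\{2,\dots,n-1\}$ (the last row equals $(0,1,\dots,1,0)$). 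Substituting these values and regrouping the vertices into the blocks $\{1,n\}$, $\{2,\dots,n-3\}$, $\{n-2,n-1\}$ shows $A$ is permutation similar to $F_n$. For the converse, a direct count gives $f(F_n)=\tfrac{n(n-1)}{2}-2$, and since $D(F_n)$ is acyclic and its only walks of length $n-3$ are the four source $\to$ (Hamiltonian path through the transitive middle) $\to$ sink walks, each with a distinct endpoint pair, we get $F_n^{\,n-3}\in M_n\{0,1\}$, so $D(F_n)\in Ex(n,\mathscr{F}_{n-3})$.

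I expect the main obstacle to be verifying carefully that, after the correct re-blocking of $K_{n-1}$, the matrix $A$ really does satisfy the hypotheses of Lemma~\ref{le4} (including the size inequality, which is where $n\ge 7$ enters), and organizing the transpose reduction cleanly so that the $K'_{n-1}$ alternative need not be reworked by hand. The subsequent counting step and the converse computation are routine.
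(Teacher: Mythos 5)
Your proof is correct, and its skeleton is the same as the paper's: Lemma \ref{le3} plus Theorem \ref{le6} to locate a principal submatrix $A(n)$ permutation similar to $K_{n-1}$ or $K'_{n-1}$, then the re-blocking of $K_{n-1}$ into the shape required by Lemma \ref{le4} with $k=1$, $t=n-4$, $s=2$ (which is exactly where the hypothesis $n\ge 7$ enters, via $t\ge 3$). You diverge from the paper in two places, both legitimately and to good effect. First, the paper reruns the whole argument for the $K'_{n-1}$ case, whereas you dispose of it by the transpose-plus-reversal symmetry ($A\mapsto A^T$ preserves $\Gamma(n,k)$ and $f$, sends $K'_{n-1}$ to a matrix permutation similar to $K_{n-1}$, and fixes $F_n$ up to permutation similarity); this halves the work. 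Second, for the endgame the paper first shows $a_{n,n-2}=a_{n,n-1}=1$ by comparing $f(A(n-2))$ and $f(A(n-1))$ against Theorem \ref{le6} and then invokes Corollary \ref{co6} applied to $A(n-1)$, while you extract the conclusion directly from the full strength of Lemma \ref{le4} (the condition $a_{in}a_{nj}=0$ for $j\le i+2$) via the pigeonhole on $i_0=\max I$, which gives $|I|+|J|\le n-3<n-2$ unless $I=\emptyset$. Your count is airtight (with $I\subseteq\{1,\dots,n-3\}$ from $x_3=\alpha=0$ and $J\subseteq\{2,\dots,n-1\}$ from $y_1=\alpha=0$), and the resulting $A$, with sources $\{1,n\}$, transitive middle $\{2,\dots,n-3\}$ and sinks $\{n-2,n-1\}$, is visibly $F_n$; the converse verification is also sound since the acyclic structure admits exactly four walks of length $n-3$, one per source--sink pair.
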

 \begin{proof}
 From  \cite[Corollary 11]{HZ1}
 we get $(\ref{eqh5})$. Suppose $D\in Ex(n,\mathscr{F}_{n-3})$ and   $A\equiv A_D$.  Applying \ref{le3} we see  that $A$ contains a submatrix $A(i)$, say $A(n)$, such that $f(A(n))\ge \frac{(n-1)(n-2)}{2}-1$. By Theorem \ref{le6},
  $f(A(n))=\frac{(n-1)(n-2)}{2}-1$ and $A(n)$ is permutation similar to $K_{n-1}$ or $K'_{n-1}$.

First  we consider the case that $A(n)$ is permutation similar to $K_{n-1}$. Without loss of generality we can assume $A(n)=K_{n-1}$ and
 $$A=\begin{bmatrix}
0&J_{1,n-4}&J_{1,2}&x_{1}\\
0&T_{n-4}&J_{n-4,2}&x_{2}\\
0&0&0&x_{3}\\
y_{1}^T&y_{2}^T&y_{3}^T&\alpha
\end{bmatrix},$$
 where $x_{1},y_{1}\in \mathbb{R}$, $x_{2},y_{2}\in \mathbb{R}^{n-4}$, and $x_{3},y_{3}\in \mathbb{R}^{2}$.

 Let $x=(x^T_{1}, x_{2}^T, x_{3}^T) $ and $y=(y_{1}^T,y_{2}^T,y_{3}^T) $. Then
$$\alpha+f(x)+f(y)=f(A)-f(A(n))=n-2.$$
 Applying Lemma \ref{le4}, we get $x_{3}=0$, i.e., $a_{n-2,n}=a_{n-1,n}=0$.

 Let $i=n-2$ or $n-1$.  Then
 \begin{equation}\label{eqq37}
 f(A(i))=f(A)-(n-3)-a_{ni}=  \frac{(n-1)(n-2)}{2}-a_{ni}.
  \end{equation}
  On the other hand, since $A(i)\in \Gamma(n-1,n-3)$, by Theorem \ref{le6} we have
  \begin{equation}\label{eqh6}
  f(A(i))\le\frac{(n-1)(n-2)}{2}-1.
   \end{equation}
Combining (\ref{eqq37}) and (\ref{eqh6}) we get $a_{ni}=1$.

 Now applying Corollary \ref{co6} to $A(n-1)$  we have
  $y=(0,1,\ldots,1 ),x=0$, $\alpha=0,$  and
$$A= \begin{bmatrix}
0&J_{1,n-4}&J_{1,2}&0\\
0&T_{n-4}&J_{n-4,2}&0\\
0&0&0&0\\
0&J_{1,n-4}&J_{1,2}&0
\end{bmatrix}=P^T F_nP$$
where $$P=\begin{bmatrix}
 0&1\\I_{n-1}&0
 \end{bmatrix}.$$

Next suppose $A(n)$ is permutation similar to $K'_{n-1}$. Without loss of generality we can assume $A(n)=K'_{n-1}$ and
 $$A=\begin{bmatrix}
0&J_{2,n-4}&J_{2,1}&x_{1}\\
0&T_{n-4}&J_{n-4,1}&x_{2}\\
0&0&0&x_{3}\\
y_{1}^T&y_{2}^T&y_{3}^T&\alpha
\end{bmatrix},$$
 where $x_{1},y_{1}\in \mathbb{R}^2$, $x_{2},y_{2}\in \mathbb{R}^{n-4}$, and $x_{3},y_{3}\in \mathbb{R}$. Applying the same argument as above by counting $f(A(1))$, $f(A(2))$  and applying Corollary \ref{co6} to $A(1)$,  we  get $A=F_{n}$.

Conversely, if the adjacency matrix  $A$ of a digraph $D$ is permutation similar to $F_n$, by direct computation we can verify  $f(A)=ex(n, \mathscr{F}_{n-3})$ and $A^{n-3}\in M_{n}\{0,1\}$. Hence $D\in  Ex(n,\mathscr{F}_{n-3})$.\\
 \end{proof}
 \par

 Next we determine $ex(k+4,\F)$ for $k\ge 4$ and $Ex(k+4,\F)$ for $k\ge 5$.
 \begin{lemma}\label{le7}
Let $x,y\in \mathbb{R}^{n-1}$ with $n\ge 6$, and
\begin{equation*}
 A=\begin{bmatrix}
T_{n-1}&x\\y^T&\alpha
\end{bmatrix}.
\end{equation*}
\begin{itemize}
\item[(i)]
If $f(x)+f(y)+\alpha=n-2$, then $A\in \Gamma(n,n-1)$ if and only if \begin{equation}\label{eqh39}
\alpha=0, ~~ x=(a^T,0)^T,~~ y=(0,b^T)^T,
\end{equation}  where $a\in \mathbb{R}^s$, $b\in \mathbb{R}^{n-s-1}$ with $ s\in\{0,1,\ldots,n-2\}$. Here $s=0$ means $x=0$.
\item[(ii)]
If $f(x)+f(y)+\alpha=n-1$,  then $A\in \Gamma(n,n-1)$ if and only if \begin{equation}\label{eqh310}
\alpha=0, ~~x=(J_{1, s},0)^T, ~~y=(0,J_{1,n-s-1})^T,
\end{equation}
where $s\in\{0,1,\ldots,n-2\}$.
  \end{itemize}
\end{lemma}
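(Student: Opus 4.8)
The plan is to prove both directions of each equivalence, focusing first on the ``only if'' direction, which is the substantive part. Throughout, write $A = (a_{ij})$ for the displayed matrix and recall that $A \in \Gamma(n,n-1)$ means $A^{n-1} \in M_n\{0,1\}$, i.e. $D(A)$ contains no two distinct walks of length $n-1$ with the same endpoints. Since $T_{n-1} \in \Gamma(n-1,n-1)$ trivially (indeed $T_{n-1}^{n-1}=0$), all the obstructions must involve the last vertex $n$. First I would establish, in both parts, that $\alpha=0$: if $\alpha = a_{nn}=1$, then the loop at $n$ together with any in-arc $a_{in}=1$ (which exists because $f(x)+f(y)+\alpha \geq n-2 \geq 4 > 1$) gives two walks $i \to n \to n \to \cdots \to n$ of length $n-1$ unless we also route through another in-neighbor of $n$; more carefully, I would argue that with $f(x)+f(y) \geq n-3$ there are enough arcs incident to $n$ to build a forbidden pair, so $\alpha=0$. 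One clean way: apply Lemma~\ref{le4} with suitable parameters, or replicate its short walk-construction arguments directly, since the hypothesis $f(x)+f(y)+\alpha \geq n-2$ matches the threshold $s+k+2$ there with $k=s=1$, $t=n-3$.

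Next, with $\alpha=0$, the core claim is that $x$ is ``supported on a prefix'' and $y$ on the ``complementary suffix'': precisely, there is no index $i$ with $a_{in}=1$ (an in-arc to $n$ from vertex $i<n$) and an index $j$ with $a_{nj}=1$ (an out-arc from $n$ to vertex $j<n$) such that a walk through the arc $i\to n \to j$ can be extended to length $n-1$ in a way that competes with the unique $T_{n-1}$-walk. The mechanism is exactly that of Corollary~\ref{co6}: inside $T_{n-1}$ there is, for each pair $p<q$, a unique walk $p \to p{+}1 \to \cdots \to q$, and by padding at the ends (using $1\to 2 \to \cdots$ at the front and $\cdots \to n{-}1$ at the back) one gets a walk of length $n-1$ from $1$ to $n-1$; if $a_{in}=a_{nj}=1$ with $j \le i+1$ (part (ii)) or $j \le i$ in the relevant counting regime (part (i)), the detour $\cdots \to i \to n \to j \to \cdots$ produces a second walk of the same length and endpoints. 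I would do the bookkeeping: let $i_0$ be the largest index with $a_{i_0 n}=1$ and $j_0$ the smallest with $a_{n j_0}=1$ (assuming both $x,y$ nonzero for contradiction); then $f(x) \le i_0$ and $f(y) \le (n-1)-(j_0-1) = n-j_0$, so $f(x)+f(y) \le i_0 - j_0 + n$. In case (i) this gives $n-2 \le i_0-j_0+n$, i.e. $j_0 \le i_0+2$; in case (ii), $j_0 \le i_0+1$. Then the explicit detour walk of length $n-1$ exists, contradiction — so $x=0$ or $y=0$, and then the count $f(x)+f(y)$ forces the ``interval'' shape. Finally, once we know $x = (a^T,0)^T$ and $y=(0,b^T)^T$ (part (i)) or the all-ones prefixes (part (ii)), I still need that \emph{every} such choice actually lies in $\Gamma(n,n-1)$; this is the ``if'' direction.

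For the ``if'' direction I would verify directly that $A^{n-1} \in M_n\{0,1\}$ for matrices of the stated form. The key observation is that adding the arcs from $x=(a^T,0)^T$ (arcs $i \to n$ with $i$ in the support of $a \subseteq \{1,\dots,s\}$) and $y=(0,b^T)^T$ (arcs $n \to j$ with $j$ in the support of $b \subseteq \{s+1,\dots,n-1\}$) to the transitive tournament $T_{n-1}$ keeps the digraph acyclic: any path using vertex $n$ enters from some $i \le s$ and leaves to some $j \ge s+1 > i$, so the linear order $1 < 2 < \cdots < s < n < s+1 < \cdots < n-1$ is a topological order of $D(A)$ (one checks every arc respects it: $T_{n-1}$-arcs $p\to q$ have $p<q$, hence stay ordered since $n$ is inserted ``in a gap''; the new arcs go $i \to n$ with $i\le s$ and $n \to j$ with $j \ge s+1$). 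An acyclic digraph on $n$ vertices has no walk of length $\geq n$... but here we need length exactly $n-1$, so a walk of length $n-1$ is a Hamilton path and is therefore determined by its endpoints only if the acyclic digraph has a unique Hamilton path — which is not automatic. The hard part will be precisely this: showing uniqueness of the length-$(n-1)$ walk between any fixed pair of endpoints. I expect to handle it by noting that a walk of length $n-1$ in an $n$-vertex acyclic digraph must visit all $n$ vertices exactly once (a Hamilton path following the unique topological order), so between $i$ and $j$ such a walk exists only when $\{i,j\}$ are the extreme vertices in the topological order and then it is forced to be $1 \to 2 \to \cdots \to s \to n \to s+1 \to \cdots \to n-1$ — unique — unless $a$ or $b$ is missing an entry, in which case that Hamilton path's arcs may not all be present and there is no length-$(n-1)$ walk at all. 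I would also double-check the boundary cases $s=0$ and $s=n-2$ and confirm part (ii)'s all-ones requirement is exactly what makes the Hamilton path's edges all present. This final uniqueness/acyclicity argument, and reconciling it with the counting hypotheses, is where I'd spend the most care.
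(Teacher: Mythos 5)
Your necessity argument contains a genuine error. You declare the configuration $a_{in}=a_{nj}=1$ forbidden for $j\le i$ in part (i) and for $j\le i+1$ in part (ii), and then conclude ``so $x=0$ or $y=0$.'' That conclusion is false and contradicts the statement of the lemma itself: for $1\le s\le n-3$ the matrices in (\ref{eqh39}) have both $a$ and $b$ nonzero, and the extremal matrices of part (ii) contain exactly the pair $a_{sn}=a_{n,s+1}=1$, i.e.\ the case $j=i+1$ that you declare forbidden. The underlying miscalculation: splicing the detour $i\to n\to j$ into the spine $1\to 2\to\cdots\to n-1$ produces a walk of length $n-j+i$, which is $n-1$ when $j=i+1$ (a single walk, with no competitor, since the spine itself has length only $n-2$) and $n-2$ when $j=i+2$; neither yields two distinct walks of length $n-1$ with the same endpoints. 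Two competing walks only arise when $j\le i$, and even then one must modify the spine (the paper routes one copy through $\cdots\to p\to p+2\to\cdots$ and the other through $\cdots\to p+1\to p+2\to\cdots$ to absorb the extra length). Moreover, even granting your forbidden regions, the logic does not close in part (i): your counting gives $j_0\le i_0+2$, which is perfectly compatible with the non-forbidden values $j_0=i_0+1,i_0+2$, so no contradiction follows. The correct chain is: prove $a_{in}a_{nj}=0$ for all $i\ge j$, deduce $j_0\ge i_0+1$, combine with the count $f(x)+f(y)\le i_0+(n-j_0)$ to get $j_0\in\{i_0+1,i_0+2\}$ in part (i) and $j_0=i_0+1$ in part (ii), and read off the prefix/suffix supports with $s=i_0$ --- no contradiction, and no vanishing of $x$ or $y$. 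You appear to have conflated this lemma with Corollary \ref{co6}, where the exponent is $n-2$, the forbidden region really is $j\le i+2$, and the conclusion $x=0$ or $y=0$ does follow.

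Your sufficiency argument, by contrast, is essentially correct and takes a genuinely different route from the paper. You observe that $D(A)$ is acyclic with topological order $1<\cdots<s<n<s+1<\cdots<n-1$, that every walk of length $n-1$ in an acyclic digraph on $n$ vertices is a Hamiltonian path, and that the transitive tournament on $\{1,\ldots,n-1\}$ forces any such path to be the spine with $n$ inserted in the unique admissible gap; hence $A^{n-1}$ has at most one entry equal to $1$. The paper instead dominates $A$ entrywise by a matrix that is permutation similar to $T_n$ and uses monotonicity of $A\mapsto A^{n-1}$ on nonnegative matrices. Your version is more conceptual and explains \emph{why} the prefix/suffix shape works; the paper's is shorter and reuses $T_n\in\Gamma(n,n-1)$ with no further analysis.
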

\begin{proof}
{\it(i)}~ Suppose $A\in \Gamma(n,n-1)$. First we claim $\alpha=0$. Otherwise, since $f(x)+f(y)=n-3\ge 3$, we have either $f(x)\ge 2$ or $f(y)\ge 2$. If $f(x)\ge 2$, say,  $a_{in}=a_{jn}=1$ with $1\le i<j\le n-1$, then $D(A)$ has the following two distinct walks from $i$ to $n$ with the same length $n-1$:
$$\left\{\begin{array}{l}
 i\rightarrow n\rightarrow n\rightarrow n\rightarrow\cdots\rightarrow n,\\
 i\rightarrow j\rightarrow n\rightarrow n\rightarrow\cdots\rightarrow n.
\end{array}\right.$$
If $f(y)\ge 2$, say,  $a_{ni}=a_{nj}=1$ with $1\le i<j\le n-1$, then $D(A)$ has the following two distinct walks from $n$ to $j$ with the same length $n-1$:
$$\left\{\begin{array}{l}
 n\rightarrow n\rightarrow  \cdots\rightarrow n\rightarrow i\rightarrow j, \\
 n\rightarrow n\rightarrow   \cdots\rightarrow n\rightarrow n\rightarrow j.
\end{array}\right.$$
In both cases we get contradictions. Hence $\alpha=0$.

Next we claim
\begin{equation}\label{eqn311}
a_{in}a_{nj}=0 \textrm{~~~for all~~~}i\ge j.
 \end{equation}
 If $x=0$ or $y=0$, the claim is clear. Suppose  $x$, $y$ are nonzero, and there exist $i\ge j$  such that $a_{in}a_{nj}=1$. Then we have the following cases  and in each of these cases $D(A)$ has   two different walks of length $n-1$ with the same endpoints, which contradicts $A\in \Gamma(n,n-1)$.

{\it Case 1.} $i\le 2$. $D(A)$ has
 $$\left\{\begin{array}{l}
 1\rightarrow i\rightarrow n\rightarrow j\rightarrow i+2\rightarrow i+3\rightarrow\cdots \rightarrow n-1,\\
 1\rightarrow i\rightarrow n\rightarrow j\rightarrow i+1\rightarrow i+3\rightarrow\cdots \rightarrow n-1,\\
\end{array}\right.$$
where the arc $1\rightarrow i$ does not appear if $i=1$.

{\it Case 2.} $i=3$.  $D(A)$ has
 $$\left\{\begin{array}{l}
 1\rightarrow 3\rightarrow n\rightarrow j\rightarrow i+1\rightarrow\cdots \rightarrow n-1,\\
 1\rightarrow 2\rightarrow 3\rightarrow n\rightarrow j\rightarrow i+2\rightarrow\cdots \rightarrow n-1.\\
\end{array}\right.$$

{\it Case 3.} $4\le i\le n-1$.  $D(A)$ has
$$\left\{\begin{array}{l}
 1\rightarrow 3\rightarrow 4\rightarrow\cdots \rightarrow i\rightarrow n\rightarrow j\rightarrow i+1\rightarrow\cdots \rightarrow n-1,\\
 1\rightarrow 2\rightarrow 4\rightarrow\cdots \rightarrow i\rightarrow n\rightarrow j\rightarrow i+1\rightarrow\cdots \rightarrow n-1,\\
\end{array}\right.$$
where the walk $j\rightarrow i+1\rightarrow\cdots \rightarrow n-1$ does not appear if $i=n-1$.

   Let $a_{sn}$ be the last nonzero component in $x$ and $a_{nt}$ be the first nonzero component in $y$. Since $f(x)+f(y)=n-2$,  by (\ref{eqn311}) we have $t-s=1$ or 2. Hence  (\ref{eqh39}) holds.

Conversely, suppose $A$ satisfies (\ref{eqh39}). Let
\begin{equation*}\label{eqq12}
B =\begin{bmatrix}
T_{s}&J_{s,n-s-1}&J_{s,1}\\
0&T_{n-s-1}&0\\
0&J_{1,n-s-1}&0
\end{bmatrix}
\end{equation*}
with $0\le s\le n-1$.
To prove $A\in \Gamma(n,n-1)$, it suffices to verify
 $B  \in \Gamma(n,n-1),$  since $B\ge A$, where the notation $\ge$ is to be understood entrywise.

If $s=n-1$, then $B=T_{n}\in \Gamma(n,n-1)$.
If $s<n-1$, then
$$B =\begin{bmatrix}
T_{s}&J_{s,1}&J_{s,n-s-2}&J_{s,1}\\
0&0&J_{1,n-s-2}&0\\
0&0&T_{n-s-2}&0\\
0&1&J_{1,n-s-2}&0
\end{bmatrix} $$
 is permutation similar to
$$\begin{bmatrix}
T_{s}&J_{s,1}&J_{s,1}&J_{s,n-s-2}\\
0&0&1&J_{1,n-s-2}\\
0&0&0&J_{1,n-s-2}\\
0&0&0&T_{n-s-2}
\end{bmatrix}=T_{n}.$$
Therefore, $B \in \Gamma(n,n-1)$. This completes the proof for (i).

 (ii) For the sufficiency part, if (\ref{eqh310}) holds, then $A=B\in \Gamma(n,n-1)$.   For the necessity part,  let $a_{sn}$ be the last nonzero component in $x$ and $a_{nt}$ be the first nonzero component in $y$. Since $f(x)+f(y)=n-1$, applying the same arguments as above we get $\alpha=0$ and $t-s=1$. It follows that (\ref{eqh310}) holds.
 \end{proof}
\begin{theorem}\label{le8}
Let $n\ge 8$ be an integer. Then
\begin{equation}\label{eq316}
ex(n,\mathscr{F}_{n-4})=\frac{n(n-1)}{2}-4.
\end{equation}
 \end{theorem}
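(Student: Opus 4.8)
The proof naturally splits into a construction giving $ex(n,\mathscr{F}_{n-4})\ge\binom n2-4$ and a matching upper bound. For the lower bound I would exhibit an explicit digraph. Set $k=n-4\ (\ge 4)$ and let $D_0$ be obtained from a transitive tournament on $k$ positions by replacing four of those positions with two non-adjacent copies each, every copy keeping the same in- and out-neighbourhoods; equivalently $A_{D_0}$ is the principal submatrix of $J_2\otimes T_k$ on all $k$ vertices of the first block together with the copies of four of them, so $D_0$ is a $(1,k,4)$-completely transitive tournament when $n>8$ and a $(2,4,0)$-completely transitive tournament when $n=8$. Grouping the $n$ vertices by position, $A_{D_0}$ is a ``blown-up'' transitive tournament with block sizes $m_1,\dots,m_k\in\{1,2\}$, exactly four of them equal to $2$, so $\sum_i m_i^2=16+(k-4)=n+8$ and $f(A_{D_0})=\sum_{i<j}m_im_j=\tfrac12\bigl(n^2-\sum_i m_i^2\bigr)=\binom n2-4$. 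Moreover every diagonal block of $A_{D_0}$ is zero and there is an arc from block $i$ to block $j$ precisely when $i<j$, so every walk strictly increases the block index at each step; a walk of length $n-4$ would meet $n-3>k$ distinct blocks, which is impossible. Hence $A_{D_0}^{\,n-4}=0\in M_n\{0,1\}$, so $A_{D_0}\in\Gamma(n,n-4)$ and $ex(n,\mathscr{F}_{n-4})\ge\binom n2-4$.

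For the upper bound let $A\in\Gamma(n,n-4)$. Each principal submatrix $A(i)$ lies in $\Gamma(n-1,n-4)=\Gamma\bigl(n-1,(n-1)-3\bigr)$, so Theorem \ref{leh6} (applicable since $n-1\ge 7$) gives $f(A(i))\le\binom{n-1}2-2$ for every $i$. Lemma \ref{le3} with $p=0,\ q=2$ then yields $f(A)\le\binom n2-3$, and it remains to exclude equality. Suppose $f(A)=\binom n2-3$. Counting the ones of $A$ in $A(1),\dots,A(n)$ exactly as in the proof of Lemma \ref{le3} shows $\sum_{i=1}^n\bigl(\binom{n-1}2-f(A(i))\bigr)=3(n-2)-d$ where $d\ge 0$ is the number of loops of $D(A)$; since each summand is at least $2$, at least $6$ of the $A(i)$ satisfy $f(A(i))=\binom{n-1}2-2$, and for each such $i$ Theorem \ref{leh6} forces $A(i)$ to be permutation similar to $F_{n-1}$. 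After a permutation similarity we may assume $A(n)=F_{n-1}$, so $A=\begin{bmatrix}0&J_{2,n-5}&J_{2,2}&x_1\\0&T_{n-5}&J_{n-5,2}&x_2\\0&0&0&x_3\\ y_1^T&y_2^T&y_3^T&\alpha\end{bmatrix}$ with $x_i,y_i$ conformal to the blocks (sizes $2,n-5,2$), and $\sum_i\bigl(f(x_i)+f(y_i)\bigr)+\alpha=f(A)-f(A(n))=n-2$.

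Now I would apply Lemma \ref{le4} with $(k,t,s)=(2,\,n-5,\,2)$: its hypotheses hold because $A\in\Gamma(n,n-4)$, $n-5\ge 3$, and $\sum_i(f(x_i)+f(y_i))+\alpha=n-2\ge 6=s+k+2$. It gives $\alpha=0$, $y_1=0$, $x_3=0$, and $a_{in}a_{nj}=0$ whenever $j\le i+2$. Let $S=\{i:a_{in}=1\}$ and $T=\{j:a_{nj}=1\}$ be the in- and out-neighbourhoods of vertex $n$. From $\alpha=0,\ x_3=0$ we get $S\subseteq\{1,\dots,n-3\}$; from $\alpha=0,\ y_1=0$ we get $T\subseteq\{3,\dots,n-1\}$; and $|S|+|T|=f(x)+f(y)=n-2$, so both sets are nonempty (each of the two index ranges has only $n-3$ elements). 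Taking $i=\max S$ and $j=\min T$ in $a_{in}a_{nj}=0$ for $j\le i+2$ forces $\min T\ge\max S+3$, whence $n-2=|S|+|T|\le\max S+(n-\min T)\le\max S+n-(\max S+3)=n-3$, a contradiction. Hence $f(A)\le\binom n2-4$, which together with the construction proves \eqref{eq316}.

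The step I expect to be the real obstacle is passing from the easy bound $\binom n2-3$ to $\binom n2-4$: one must locate a vertex-deleted copy of the rigid extremal configuration $F_{n-1}$ and then show that no single vertex can be re-attached with as many as $n-2$ incident arcs without creating two walks of length $n-4$ with the same endpoints. Lemma \ref{le4} is precisely the tool that pins down the attachment — killing the loop, the backward arcs $y_1$, the forward arcs $x_3$, and forbidding short in/out configurations at vertex $n$ — after which the bookkeeping on $|S|$, $|T|$ and the gap $\min T\ge\max S+3$ closes the argument; making those ranges and that gap interact correctly is the delicate point, and it is also the place where the hypothesis $n\ge 8$ is used (it is exactly what makes Theorem \ref{leh6} and Lemma \ref{le4} applicable here).
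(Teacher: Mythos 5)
Your proof is correct, and the skeleton (lower-bound witness plus the chain Lemma \ref{le3} $\to$ Theorem \ref{leh6} $\to$ reduction to $A(n)=F_{n-1}$ $\to$ Lemma \ref{le4}) matches the paper's; the two places where you genuinely diverge are the witness and the closing contradiction. For the lower bound the paper uses the matrix $F_1(n)$ built from $J_{3,n-5}$ blocks, whose $(n-4)$-th power is a nonzero $0$-$1$ matrix, whereas you use a blown-up transitive tournament (an $F_4(n)$-type completely transitive tournament) whose $(n-4)$-th power vanishes outright; both verifications are immediate, yours by the block-index argument, the paper's by direct computation. For the upper bound, after Lemma \ref{le4} gives $\alpha=0$, $x_3=y_1=0$ and $a_{in}a_{nj}=0$ for $j\le i+2$, the paper passes to the sub-submatrix $A(1,n-1)$, counts its ones against Theorem \ref{le6} to force $a_{1n}=a_{n,n-1}=1$, and then invokes Corollary \ref{co6} on $A(1,n-1)$ to conclude $x=0$ or $y=0$, a contradiction. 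You instead exploit the full strength of the condition $a_{in}a_{nj}=0$ for $j\le i+2$: with $S,T$ the in/out neighbourhoods of vertex $n$, the constraints $S\subseteq\{1,\dots,n-3\}$, $T\subseteq\{3,\dots,n-1\}$ and $\min T\ge\max S+3$ give $|S|+|T|\le n-3<n-2$ directly. This is a cleaner finish: it avoids Corollary \ref{co6} and the extra count on $A(1,n-1)$ entirely, at no cost in hypotheses, since everything you use is already in the conclusion of Lemma \ref{le4}.
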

\begin{proof}
  Let $A$ be the adjacency matrix of any $\mathscr{F}_{n-4}$-free digraph  $D$ of order $n$. Then $A\in \Gamma(n,n-4)$ and  $A(i)\in \Gamma(n-1,n-4)$ for all $1\le i\le n$. Hence
   $$f(A(i))\le  ex(n-1,\mathscr{F}_{n-4})=\frac{(n-1)(n-2)}{2}-2$$
   for all $1\le i\le n$.   By Lemma \ref{le3}, we have
   \begin{equation}\label{eq313}
   f(A)\le \frac{n(n-1)}{2}-3.
    \end{equation}

   Suppose equality in (\ref{eq313}) holds.
    Then by Lemma \ref{le3}, $A$ contains a submatrix $A(i)$ with $\frac{(n-1)(n-2)}{2}-2$ nonzero entries.  Using permutation similarity if necessary, without loss of generality we assume $f(A(n))=\frac{(n-1)(n-2)}{2}-2$. Since  $A(n)\in \Gamma(n-1,n-4)$, by Theorem \ref{leh6}, we may further assume
\begin{equation}\label{eq311}
A=(a_{ij})=\begin{bmatrix}
             0&J_{2,n-5}&J_{2,2}&x_{1}\\
             0&T_{n-5}&J_{n-5,2}&x_{2}\\
             0&0&0&x_{3}\\
             y^{T}_{1}&y^{T}_{2}&y^{T}_{3}&\alpha
\end{bmatrix},
\end{equation}
where $x_1,x_3,y_1, y_3\in \mathbb{R}^{2}$, $x_2,y_2\in \mathbb{R}^{n-5}$.

Let $x=(x_1^T,x_2^T,x_3^T)$ and $y=(y_1^T,y_2^T,y_3^T)$. Then $$f(x)+f(y)+\alpha=f(A)-f(A(n))=n-2.$$
Applying Lemma \ref{le4} to $A$ we know $y_{1}=x_{3}=0$ and $\alpha=0$.

  Since $a_{n-1,n}=a_{n1}=0$ and $A(1,n-1)\in \Gamma(n-2,n-4)$,  by (\ref{eq311}) we have
\begin{eqnarray*}
f(A(1,n-1))&=&f(A)-2(n-5)-3-a_{1n}-a_{n1}-a_{n-1,n}-a_{n,n-1}\\
 &=&\frac{(n-2)(n-3)}{2}+1-a_{1n}-a_{n,n-1}\\
 &\le&\frac{(n-2)(n-3)}{2}-1
  \end{eqnarray*}
  where the inequality follows from Theorem \ref{le6}.
   Hence,
\begin{equation}\label{eq312}
a_{1n}=a_{n,n-1}=1.
 \end{equation}
 On the other hand, applying Corollary \ref{co6} to $A(1,n-1)$ we have either $x=0$ or $y=0$, which contradicts (\ref{eq312}).
 Hence, (\ref{eq313}) is a strict inequality and we have
\begin{equation}\label{eq314}
ex(n,\mathscr{F}_{n-4})\le \frac{n(n-1)}{2}-4.
\end{equation}

Now let $D$ be the digraph with adjacency matrix
$$
B=\begin{bmatrix}
             0&J_{3,n-5}&J_{3,2}\\
             0&T_{n-5}&J_{n-5,2}\\
             0&0&0\\
\end{bmatrix}.$$ By direct computation, we have
$$
B^{n-4}=\begin{bmatrix}
             0&0&J_{3,2}\\
             0&0&0\\
             0&0&0\\
\end{bmatrix},$$
and hence $D$ is $\mathscr{F}_{n-4}$-free.
Therefore,
\begin{equation}\label{eq315}
ex(n,\mathscr{F}_{n-4})\ge f(B)=\frac{n(n-1)}{2}-4.
\end{equation}
Combining (\ref{eq314}) and (\ref{eq315}) we get (\ref{eq316}).
\end{proof}

\begin{lemma}\label{le10}
 Let $k\ge 5$ and $s$ be positive integers,  let $x_i,y_i\in \mathbb{R}^k$ with  components from $\{0,1\}$  for $i=1,2,\ldots,s$, and let  $$A=(a_{ij})=\begin{bmatrix}
T_{k}&x_1&x_2&x_3&\cdots&x_s\\
y_1^T&0&1&0&\cdots&0\\
y_2^T&0&0&1&\ddots&\vdots\\
y_3^T&0&0&0&\ddots&0\\
\vdots&\vdots&\vdots&\vdots&\ddots&1\\
y_s^T&1&0&\cdots&\cdots&0
\end{bmatrix}$$.
\begin{itemize}
\item[(i)]If there is some  $i\in\{1,2,\ldots,s\}$ such that $f(x_i)\ge 3$ or $f(y_{i})\ge 3$, then $A \notin \Gamma(k+s,p)$ for any integer $p\ge 2$.
 \item[(ii)]If $s=2$ and there is some  $i\in\{1,2\}$ such that $f(x_i)=f(y_{i})= 2$, then $A \notin \Gamma(k+s,p)$   for any integer $p\ge 5$.
\end{itemize}

 \end{lemma}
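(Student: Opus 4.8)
The plan is to exhibit, in each case, two distinct walks of the prescribed length $p$ sharing both endpoints, thereby showing $A\notin\Gamma(k+s,p)$. The matrix $A$ describes a digraph on vertices $\{1,\dots,k\}\cup\{k+1,\dots,k+s\}$ whose restriction to $\{1,\dots,k\}$ is the transitive tournament $T_k$, whose restriction to $\{k+1,\dots,k+s\}$ is the directed cycle $(k+1)\to(k+2)\to\cdots\to(k+s)\to(k+1)$, and where $x_i$ encodes arcs from $\{1,\dots,k\}$ into $k+i$ while $y_i$ encodes arcs from $k+i$ into $\{1,\dots,k\}$. The key structural remark I would record first: because the restriction to the last $s$ vertices is a single directed $s$-cycle, from any vertex $k+i$ there is a unique walk of any given length staying inside the cycle, and it returns to $\{1,\dots,k\}$ only via the $y$-arcs; similarly the transitive tournament $T_k$ on $\{1,\dots,k\}$ contains, between suitable vertices, walks of several distinct lengths (e.g. the two walks $1\to2\to\cdots$ versus $1\to3\to\cdots$, or inserting/deleting an intermediate vertex), which is the standard source of ``slack'' exploited already in Lemmas \ref{le4} and \ref{le7}. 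For part (i), fix the index $i$ with $f(x_i)\ge 3$ (the case $f(y_i)\ge3$ being symmetric via transposition, since $A^T$ has the same block shape with the cycle reversed). Pick three indices $a<b<c$ in $\{1,\dots,k\}$ with $a_{a,k+i}=a_{b,k+i}=a_{c,k+i}=1$. Then from $a$ one can reach $k+i$ by the single arc $a\to k+i$, and also by $a\to b\to k+i$, and also by $a\to b\to c\to k+i$ using only arcs of $T_k$ and the three $x_i$-arcs; these give walks of lengths $1,2,3$ from $a$ to $k+i$. Padding each at the tail by looping around the $s$-cycle the appropriate number of times and then, if $p$ is large, returning and looping inside $T_k$ — more simply, by first walking the length-$1$ (resp. length-$2$) $a$-to-$k+i$ walk and then traversing the cycle — one obtains, for every $p\ge2$, two distinct walks of length $p$ from $a$ to some common terminal vertex: choose the two $a$-to-$k+i$ walks of lengths $1$ and $2$ and append to each the unique length $p-1$ (resp. $p-2$) walk forward along the directed $s$-cycle starting at $k+i$; since $1\ne 2$, the resulting walks are distinct (they differ already in their second vertex), and both have length $p$ and terminal vertex $k+i+(p-1)\bmod s$ (indices of cycle vertices taken cyclically). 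This handles part (i).

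For part (ii) take $s=2$, so the last two vertices form a digon $k+1\leftrightarrow k+2$, and suppose say $f(x_1)=f(y_1)=2$ (the index $i=2$ is symmetric, as is the $x_2,y_2$ case). Write $a<b$ for the two rows with $a_{a,k+1}=a_{b,k+1}=1$ and $c<d$ for the two columns with $a_{k+1,c}=a_{k+1,d}=1$. Now I would build two length-$p$ walks through the vertex $k+1$: the general shape is ``enter $k+1$ from $\{1,\dots,k\}$, possibly bounce once or twice on the digon to $k+2$ and back, then leave $k+1$ into $\{1,\dots,k\}$, then ride $T_k$ forward.'' The point of $p\ge5$ is to have enough room: one walk uses the entry $a\to k+1$ and the exit $k+1\to c$, the other uses $b\to k+1$ and $k+1\to d$, and the surplus length is absorbed by traversing the $k+1,k+2$ digon a different number of times on the two walks (each ``bounce'' $k+1\to k+2\to k+1$ costs $2$) together with a length-mismatch trick inside $T_k$ at the start (the two-versus-three-step detour $u\to w$ vs.\ $u\to v\to w$) to reconcile parities. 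Concretely I would aim for the two walks
$$
a\to k+1\to k+2\to k+1\to c\to(\text{forward in }T_k\text{ of length }p-4),
$$
$$
b\to k+1\to d\to(\text{forward in }T_k\text{ of length }p-2),
$$
arranging the initial vertex and the tail endpoints so the two walks share both endpoints; one uses that between any two vertices of $T_k$ that are far enough apart there exist walks of every length from the shortest-path length up to $k-1$, so the tails can be made to terminate at a common vertex provided $p$ is at least $5$ and $k\ge5$. If the naive attempt fails to match endpoints for small residues of $p$, a finite case check on $p\bmod 2$ (or on which of $a,b,c,d$ coincide) completes it — this is exactly the routine bookkeeping I would relegate to the proof.

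The main obstacle is bookkeeping in part (ii): ensuring that after prescribing the two entry arcs, the two exit arcs, and the number of digon bounces, the two tails inside $T_k$ can be forced to the same terminal vertex with the right total length, for every $p\ge5$ and without assuming anything about whether $a,b,c,d$ are distinct or how they interleave. The cleanest route is probably to prove a small auxiliary fact first: in $T_k$ with $k\ge5$, for any $u\in\{1,\dots,k\}$ and any $\ell$ with $1\le\ell\le k-1$ there is a walk of length exactly $\ell$ from $u$ of the form $u<u'<u''<\cdots$, and moreover for $\ell\le k-1$ one can also find one of length $\ell$ starting $u\to(\text{second vertex})$ for at least two different choices of second vertex when $u\le k-2$; with that in hand, the length- and endpoint-matching in part (ii) becomes mechanical. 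Everything else reduces to the walk-padding idea already used repeatedly in the preceding lemmas, so I expect parts (i) and the structural setup to be short, with part (ii) carrying essentially all the weight.
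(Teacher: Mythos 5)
Your overall strategy (exhibit two distinct length-$p$ walks with common endpoints) is the right one and is the paper's, but both of your concrete constructions break down.

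In part (i), your two walks enter the $s$-cycle at different time steps: the first is $a\to k+i$ followed by $p-1$ cycle steps, the second is $a\to b\to k+i$ followed by $p-2$ cycle steps. Since the bottom block is a single directed $s$-cycle, these terminate at \emph{consecutive} cycle vertices, which are distinct for every $s\ge 2$ (the only case that occurs in the applications); your claim that both end at $k+i+(p-1)\bmod s$ is false for the second walk. The repair is exactly what the paper does: with three arcs $a\to k+i$, $b\to k+i$, $c\to k+i$ and $a<b<c$, take the two length-$2$ entries $a\to b\to k+i$ and $a\to c\to k+i$ and append to both the \emph{same} cycle walk of length $p-2$; the walks then agree from step $2$ onward and differ in their second vertex.

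In part (ii) there are two genuine gaps. First, your two displayed walks start at different vertices $a\ne b$, and you never arrange a common initial vertex. Second, and fatally, your tails ``forward in $T_k$ of length $p-4$'' (resp.\ $p-2$) do not exist once $p>k+3$: $T_k$ is acyclic, so its longest walk has length $k-1$, and $p\ge 5$ is unbounded. All surplus length must be absorbed in the digon, not in $T_k$. The paper's construction starts both walks at $p_1$ and ends both at $q_2$ (where $p_1<p_2$ are the rows of $x_1$ and $q_1<q_2$ the columns of $y_1$), puts the entire surplus into bounces $k+1\to k+2\to k+1$ (each of even length), and fixes the parity by optionally inserting the single $T_k$-arcs $p_1\to p_2$ at the start and $q_1\to q_2$ at the end: for odd $p$ compare $p_1\to k+1\to\cdots\to k+1\to q_1\to q_2$ with $p_1\to p_2\to k+1\to\cdots\to k+1\to q_2$, and for even $p$ swap which walk carries the extra $T_k$-arc. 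The ``routine bookkeeping'' you defer is precisely where the content of part (ii) lies, and the concrete attempt you wrote down cannot be completed as stated.
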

\begin{proof}
{\it (i)}~~If there is some $t$ such that  $f(x_t)\ge 3$,  then we have $a_{i,k+t}=a_{j,k+t}=a_{m,k+t}=1$ for $1\le i<j<m\le k$. Without loss of generality we assume $t=1$. For any $p\in \{2,3,\ldots,k\}$, we can find two  distinct walks of length $p$ between the same endpoints in the following walks in $D(A)$.
$$\left\{\begin{array}{l}
 i\rightarrow j\rightarrow k+1\rightarrow k+2\rightarrow\cdots\rightarrow k+s \rightarrow k+1\rightarrow k+2\rightarrow\cdots \\
 i\rightarrow m\rightarrow k+1\rightarrow k+2\rightarrow\cdots\rightarrow k+s \rightarrow k+1\rightarrow k+2\rightarrow\cdots
\end{array}\right.$$

If  there is some $t$, say $t=1$, such that  $f(y_t)\ge 3$, then we have $a_{k+1,i}=a_{k+1,j}=a_{k+1,m}=1$ for $1\le i<j<m\le k$. For any $p\in \{2,3,\ldots,k\}$, we can find two  distinct walks of length $p$ between the same endpoints in the following walks in $D(A)$
$$\left\{\begin{array}{l}
 \cdots\rightarrow  k+s\rightarrow k+1\rightarrow k+2\rightarrow\cdots\rightarrow k+s\rightarrow k+1\rightarrow i\rightarrow m,\\
 \cdots\rightarrow  k+s\rightarrow k+1\rightarrow k+2\rightarrow\cdots\rightarrow k+s\rightarrow k+1\rightarrow j\rightarrow m.
\end{array}\right.$$
Therefore,  $A\notin \Gamma(k+s,p)$ for  any $p\in \{2,3,\ldots,k\}$.\\
\par

{\it (ii)}~~Without loss of generality, we assume $f(x_1)=f(y_1)=2$ and $a_{p_1,k+1}=a_{p_2,k+1}=a_{k+1,q_1}=a_{k+1,q_2}=1$ with $1\le p_1<p_2\le k$ and $1\le q_1<q_2\le k$.
If $p\ge 5$ is  odd, then $D(A)$ has the following two distinct walks of length $p$ from $p_1$ to $q_2$:
$$\left\{\begin{array}{l}
 p_1 \rightarrow  k+1\rightarrow k+2\rightarrow\cdots \rightarrow k+1  \rightarrow q_1 \rightarrow q_2,\\
  p_1\rightarrow p_2 \rightarrow  k+1\rightarrow k+2\rightarrow\cdots \rightarrow k+1  \rightarrow q_2.
\end{array}\right.$$
If $p\ge 5$ is  even, then $D(A)$ has the following two distinct walks of length $p$ from $p_1$ to $q_2$:
$$\left\{\begin{array}{l}
 p_1 \rightarrow p_2\rightarrow  k+1\rightarrow k+2\rightarrow\cdots \rightarrow k+1  \rightarrow q_1 \rightarrow q_2,\\
  p_1\rightarrow  k+1 \rightarrow  k+2\rightarrow k+1\rightarrow\cdots\rightarrow k+2 \rightarrow k+1  \rightarrow q_2.
\end{array}\right.$$

Therefore, $A\not\in \Gamma(k+s,p)$ for any integer $p\ge 5$.
\end{proof}

\begin{theorem}\label{th9}
 Let $n\ge 9$ be an integer. Then a digraph $D$ is in $Ex(n,\mathscr{F}_{n-4})$ if and only if $A_D$ is permutation similar to one of the following matrices
  $$
F_{1}(n)\equiv\begin{bmatrix}
             0&J_{3,n-5}&J_{3,2}\\
             0&T_{n-5}&J_{n-5,2}\\
             0&0&0\\
\end{bmatrix},\quad
F_{2}(n)\equiv\begin{bmatrix}
             0&J_{2,n-5}&J_{2,3}\\
             0&T_{n-5}&J_{n-5,3}\\
             0&0&0\\
\end{bmatrix},$$$$
F_{3}(n)\equiv\begin{bmatrix}
             0&J_{2,n-5}&J_{2,2}&J_{2,1}\\
             0&T_{n-5}&J_{n-5,2}&U_{m}\\
             0&0&0&0\\
             0&U'_{m}&J_{1,2}&0
\end{bmatrix},\quad
F_{4}(n)\equiv\begin{bmatrix}
T_{n-4}&w_{1}&w_{2}&w_{3}&w_{4}\\
u_{1}&0&1&1&1\\
u_{2}&0&0&1&1\\
u_{3}&0&0&0&1\\
u_{4}&0&0&0&0
\end{bmatrix},$$ where
$$U_{m}=(J_{1,m},0)^T,~~  U'_{m}=(0,J_{1,n-m-7}) \textrm{~~ with~~} 0\leq m\leq n-7$$  and
 $$w_{j}=(J_{1,k_j},0)^T, ~~u_{j}=(0,J_{1,n-k_j-5})  \textrm{~~for~~} i=1,2,3,4$$ with $0\leq k_1<k_2<k_3<k_4\leq n-5$.
 \end{theorem}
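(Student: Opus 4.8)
The plan is to follow the same scheme used for Theorems \ref{le6}, \ref{leh6} and \ref{le8}: induct one vertex at a time. Let $D\in Ex(n,\mathscr{F}_{n-4})$ and $A\equiv A_D$, so $f(A)=\frac{n(n-1)}{2}-4$ by Theorem \ref{le8}. First I would apply Lemma \ref{le3} (with the bound $ex(n-1,\mathscr{F}_{n-4})=\frac{(n-1)(n-2)}{2}-2$ from Theorem \ref{leh6}, which forces $p=2$, $q=0$ in that lemma) to locate an index $i$ with $f(A(i))\ge\frac{(n-1)(n-2)}{2}-2$, and hence, since $A(i)\in\Gamma(n-1,n-4)$ and Theorem \ref{le8} gives the reverse inequality, $f(A(i))=\frac{(n-1)(n-2)}{2}-2$ with $A(i)\in Ex(n-1,\mathscr{F}_{n-4})$. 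Now I need the characterization of $Ex(n-1,\mathscr{F}_{n-4})$, i.e.\ of digraphs of order $n-1$ avoiding $\mathscr{F}_{(n-1)-3}$-type configurations of length $n-4$; this is exactly Theorem \ref{leh6} with parameter $(n-1)-3$, so $A(i)$ is permutation similar to $F_{n-1}$. After a permutation I may assume $i=n$ and $A(n)=F_{n-1}$, which writes $A$ in the block form \eqref{eq311}-style layout with border vectors $x=(x_1^T,x_2^T,x_3^T)^T$, $y=(y_1^T,y_2^T,y_3^T)^T$ and corner entry $\alpha$, satisfying $f(x)+f(y)+\alpha=f(A)-f(A(n))=n-2$.

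Next I would extract structure on the border. Apply Lemma \ref{le4} (with the appropriate $k,t,s$ matching the block sizes $2,n-5,2$ of $F_{n-1}$) to get $\alpha=0$, $y_1=0$, $x_3=0$, and the "no short path through $n$" conditions $a_{in}a_{nj}=0$ for $j\le i+2$. Then, exactly as in the proof of Theorem \ref{le8}, I would delete one or two further well-chosen vertices (those at the extreme ends of the transitive part, or the two "sink" vertices of $F_{n-1}$), count nonzero entries of the resulting principal submatrix, and compare with $ex(n-2,\mathscr{F}_{n-4})$ (Theorem \ref{leh6} again, or Theorem \ref{le6}): this pins down several individual border entries to be $1$. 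Feeding these back, together with the $a_{in}a_{nj}=0$ constraints and Corollary \ref{co6} applied to a size-$(n-1)$ principal submatrix, forces $x$ (or $y$) to be an initial run of $1$'s followed by $0$'s (resp.\ a $0$ followed by a run of $1$'s). Tracking which of the several sub-cases of Corollary \ref{co6} occur, and where the extra "defect" of $4$ deleted arcs is distributed among the blocks, yields precisely the four families $F_1(n),F_2(n),F_3(n),F_4(n)$: $F_1$ and $F_2$ when the defect collapses into one big $J$-block giving $3$ sources or $3$ sinks on the transitive tournament, $F_3$ when it splits as a $2$-block plus an isolated extra vertex attached by a partially-filled column/row $U_m,U_m'$, and $F_4$ when the border vertex $n$ enters a $4$-cycle-like interaction with three further appended vertices carrying nested runs $w_j,u_j$ of $1$'s (here Lemma \ref{le10}(i),(ii) rule out columns/rows of weight $\ge 3$ and the weight-$(2,2)$ pattern, which is what confines this case to the stated nested form with $k_1<k_2<k_3<k_4$). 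For the converse, for each $F_j(n)$ I would compute $F_j(n)^{n-4}$ directly (as in Theorem \ref{le8} for $F_1$) and check it is a $0$-$1$ matrix, and verify $f(F_j(n))=\frac{n(n-1)}{2}-4$.

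The main obstacle I anticipate is the case analysis bookkeeping, not any single hard idea: once $A(n)$ is reduced to $F_{n-1}$, the vector $x$ or $y$ can attach to the three blocks of $F_{n-1}$ in several inequivalent ways, and one must carefully use the path-avoidance relations from Lemma \ref{le4} together with repeated applications of Corollary \ref{co6} and Lemma \ref{le10} to prune all possibilities down to exactly the listed four families — in particular separating $F_3$ (one "extra" vertex with a fractional $J$-block $U_m$) from the degenerate endpoints $m=0$ or $m=n-7$ where it merges into $F_1$ or $F_2$, and confirming that the nested-run structure in $F_4$ is forced rather than merely sufficient. A secondary subtlety is that the induction requires the case $n=8$ (characterization of $Ex(8,\mathscr{F}_4)$) as a base, or else a large enough lower bound $n\ge 9$ so that all the deleted-submatrix orders stay within the ranges where Theorems \ref{le6}, \ref{leh6}, \ref{le8} apply; I would check at the outset that $n\ge 9$ suffices for every submatrix order ($n-1\ge 8$, $n-2\ge 7$, $n-3\ge 6$) invoked above.
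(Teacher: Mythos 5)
Your opening step contains a genuine gap that invalidates the overall architecture. You claim that Lemma \ref{le3} ``locates an index $i$ with $f(A(i))\ge \frac{(n-1)(n-2)}{2}-2$.'' It does not. Reading Lemma \ref{le3} in the contrapositive with $p=0$, $q=3$: if every $f(A(i))\le \frac{(n-1)(n-2)}{2}-3$, the lemma yields only $f(A)\le \frac{n(n-1)}{2}-4$, which is exactly $ex(n,\mathscr{F}_{n-4})$ and hence no contradiction. So the best you can extract is an index with $f(A(i))\ge \frac{(n-1)(n-2)}{2}-3$, and you are left with a genuine dichotomy: either some $A(q)$ attains $\frac{(n-1)(n-2)}{2}-2$ (the paper's Case 1, which, via Theorem \ref{leh6}, Lemma \ref{le4} and Corollary \ref{co6}, produces exactly $F_1(n)$, $F_2(n)$, $F_3(n)$ --- this part of your sketch is essentially the paper's argument), or \emph{every} $f(A(i))$ equals $\frac{(n-1)(n-2)}{2}-3$. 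This second case is not vacuous: it is precisely where $F_4(n)$ lives. One checks directly that for $F_4(n)$ every $\delta_i=f(A)-f(A(i))$ is at least $n-2$, so no vertex-deleted submatrix of $F_4(n)$ is ever permutation similar to $F_{n-1}$, and consequently $F_4(n)$ is unreachable from your starting configuration $A(n)=F_{n-1}$. Your sketch gestures at $F_4$ via Lemma \ref{le10}, but offers no route into that case. (Your parenthetical ``$p=2$, $q=0$'' is also off; the bound $\frac{(n-1)(n-2)}{2}-2$ corresponds to $p=0$, $q=2$.)

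The missing case is the bulk of the paper's proof and requires a different mechanism: starting from $\delta_i\ge n-2$ for all $i$, one runs a cascade of deletions --- showing successively (Claims 1 and 2) that $f(A(i,n))\le\frac{(n-2)(n-3)}{2}-2$ and $f(A(i,n-1,n))\le\frac{(n-3)(n-4)}{2}-1$ with equality attained somewhere, until $A(n-3,n-2,n-1,n)$ is forced to be $T_{n-4}$ --- and then four more structural claims (acyclicity of the trailing $4\times 4$ block via Lemma \ref{le10}, the form $x_i=(a_i^T,0)^T$, $y_i=(0,b_i^T)^T$ via Lemma \ref{le7}, the separation $s_{n-3}<s_{n-2}<s_{n-1}<s_n$, and $t_i=s_i+2$) to pin down the nested runs $w_j,u_j$. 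None of this is recoverable from repeated applications of Corollary \ref{co6} to a single border row and column, so as written your proposal establishes only the $F_1,F_2,F_3$ branch of the necessity direction.
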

\begin{proof}
Suppose $A\equiv A_D$ is permutation similar to one of $F_1(n),F_2(n),F_3(n)$ and $F_4(n)$.   It is clear that $f(A)=\frac{n(n-1)}{2}-4$. To prove $D\in Ex(n,\mathscr{F}_{n-4})$, it suffices to prove $F_i^{n-4}(n)\in M_n\{0,1\}$   for $i=1,2,3,4$.

By direct computation we know
  $$F_{1}^{n-4}(n)=\begin{bmatrix}
             0&J_{3,2}\\
             0&0
\end{bmatrix},\quad F_{2}^{n-4}(n)=\begin{bmatrix}
              0&J_{2,3}\\
             0&0
\end{bmatrix},\quad  F_{3}^{n-4}(n)=\begin{bmatrix}
             0&J_{2,2}&O_{2\times 1}\\
             0&0&0
\end{bmatrix}$$ are all 0-1 matrices,
where $O_{2\times 1}$ is the $2\times 1$ zero matrix.

For $i=4$, let $\alpha=\{1,2,\ldots,n-4,n-3+k_1,n-3+k_2,n-3+k_3,n-3+k_4\}$ and
$$A' \equiv J_2\otimes T_{n-4}=\begin{bmatrix}
T_{n-4}&T_{n-4}\\
T_{n-4}&T_{n-4}
\end{bmatrix}.$$
Then  $F_4(n)=A'[\alpha]$ is a principal submatrix of $A'$. Moreover, $$(A')^{n-4}=(J_2\otimes T_{n-4})^{n-4}=J_2^{n-4}\otimes T_{n-4}^{n-4}=0$$ implies $F_4^{n-4}(n)=0$. Thus we get the sufficiency of Theorem \ref{th9}.

Next we prove the necessity part of Theorem \ref{th9}.
Suppose $D\in Ex(n,\mathscr{F}_{n-4})$.
Denote by $A\equiv A_D$ the adjacency matrix of $D$. Then $f(A)=ex(n,\mathscr{F}_{n-4})$, $A\in \Gamma(n,n-4)$ and  $A(i)\in \Gamma(n-1,n-4)$ for all $1\le i\le n$. By (\ref{eqh5}) we have
$$f(A(i))\le \frac{(n-1)(n-2)}{2}-2  \textrm{~~for all~~} 1\le i\le n.$$
We distinguish two cases.

{\it Case 1.} $f(A(q))=\frac{(n-1)(n-2)}{2}-2$ for some $q\in\{1,2,\ldots,n\}$. By Theorem \ref{leh6}, without loss of generality, we assume $q=n$ and
$$A=\begin{bmatrix}
             0&J_{2,n-5}&J_{2,2}&x_{1}\\
             0&T_{n-5}&J_{n-5,2}&x_{2}\\
             0&0&0&x_{3}\\
             y^{T}_{1}&y^{T}_{2}&y^{T}_{3}&\alpha
\end{bmatrix},$$
where $x_1,x_3,y_1, y_3\in \mathbb{R}^{2}$, $x_2,y_2\in \mathbb{R}^{n-5}$. Let $x=(x_1^T,x_2^T,x_3^T)$ and $y=(y_1^T,y_2^T,y_3^T)$. Since
\begin{equation}\label{eq319}
f(x)+f(y)+\alpha=f(A)-f(A(n))=n-3\ge 6,
\end{equation}
applying Lemma \ref{le4}, we have
\begin{equation}\label{eq318}
\alpha=0, ~y_{1}=x_{3}=0,\textrm{~~and~~ }a_{in}a_{nj}=0\textrm{~~for~~}j\le
i+2.
\end{equation}

If  $x=0$, then $A$ is permutation similar to $F_{1}(n)$. If  $y=0$, then $A=F_{2}(n)$.
If both $x$ and $y$ are nonzero, let $a_{sn}$ be the last nonzero component in $x$ and $a_{nt}$ be the first nonzero component in $y$. By (\ref{eq319}) and (\ref{eq318}) we have
\begin{equation}\label{eq320}
t-s=3,~~x=(J_{1,s},0)\textrm{~~and~~}y=(0,J_{1,n-s-3}).
\end{equation}
By exchanging row 1 and row 2 of $A$, and exchanging column 1 and column 2 of $A$ simultaneously, we obtain a new matrix $A'=(a'_{ij})$. Applying Lemma \ref{le4} to $A'$ we have $a'_{in}a'_{nj}=0$\textrm{ for }$j\le
i+2$.   Hence $$a_{1n}a_{n4}=a'_{2n}a'_{n4}=0.$$
Similarly, by interchanging the roles of the indices $n-1$ and $n-2$,  we get $$a_{n-4,n}a_{n,n-1}=0.$$ Therefore, in (\ref{eq320}) we have  $s\ne 1$ and $t\ne n-1$. Hence $A=F_3(n)$ with $m=s-2$.\\
\par

Case 2. $f(A(i))\le\frac{(n-1)(n-2)}{2}-3$ for all $i$. Denote by $$\delta_{i}=\sum_{j=1}^na_{ij}+\sum_{j\ne i}a_{ji}$$ the number of nonzero entries in the $i$-th row and the $i$-th column of $A$.
   Then \begin{equation}\label{eq321}
\delta_{i}= f(A)-f(A(i))\ge n-2 \textrm{~~for all~~} 1\le i\le n.
\end{equation}
Applying Lemma \ref{le3} to $A$, there exists some $i_0$ such that $$f(A(i_0))=\frac{(n-1)(n-2)}{2}-3.$$  Without loss of generality, we assume $i_0=n$.
For any $i\in\{1,\ldots,n-1\}$, since $A(i,n)\in \Gamma(n-2,n-4)$, by Theorem \ref{le6} we have
\begin{equation}\label{eq322}
f(A(i,n))\le\frac{(n-2)(n-3)}{2}-1.
\end{equation}

Next we prove the following claim.

{\it{\bf Claim 1.} Let $i\in\{1,2,\ldots,n-1\}$. Then
\begin{equation}\label{eq3323}
f(A(i,n))\le\frac{(n-2)(n-3)}{2}-2.
\end{equation}
Moreover, there exists some  $i\in\{1,2,\ldots,n-1\}$ such that  equality holds in (\ref{eq3323}). }

 Suppose equality in (\ref{eq322}) holds for some $i_1$, say, $i_1=n-1$. Then  by Theorem \ref{le6}, $A(n-1,n)$ is permutation similar to $K_{n-2}$ or $K'_{n-2}$.
  By (\ref{eq321}) we have
\begin{eqnarray*}
 \delta_{n-1}&=&f(A(n))-f(A(n-1,n))+a_{n-1,n}+a_{n,n-1}\\
 &=&n-4+a_{n-1,n}+a_{n,n-1}\\
 &\ge& n-2.
 \end{eqnarray*}
  It follows that $a_{n-1,n}=a_{n,n-1}=1$.

  If $A(n-1,n)$ is permutation similar to $K'_{n-2}$, without loss of generality, we may assume
$$A=\begin{bmatrix}
             0&J_{2,n-4}&x_{1}&x_{3}\\
             0&T_{n-4}&x_{2}&x_{4}\\
             y^{T}_{1}&y^{T}_{2}&\alpha&1\\
             y^{T}_{3}&y^{T}_{4}&1&\alpha'
\end{bmatrix},$$
where $x_1,x_3,y_1, y_3\in \mathbb{R}^{2}$, $x_2,y_2,x_4,y_4\in \mathbb{R}^{n-4}$. By  \cite[Lemma 1]{HZ1}, we have $\alpha=\alpha'=0$ and $$a_{in}+a_{ni}\le 1\textrm{~~for all~~}1\le i\le n-2.
$$
 Thus  $f(x_3)+f(y_3)\le 2$ and
 \begin{equation}\label{eq324}
 f(x_3)+f(x_4)+f(y_3)+f(y_4)=\delta_n-2=n-4\ge 5.
 \end{equation}

If $x_4$ has two nonzero entries, say, $a_{i_1,n}=a_{i_2,n}=1$ with $3\le i_1,i_2\le n-2$, then $D$ has the following distinct walks of length $n-4$  between the same endpoints
$$\left\{\begin{array}{l}
2\rightarrow i_1\rightarrow n\rightarrow n-1\rightarrow n\rightarrow\cdots\rightarrow n(\rightarrow n-1),\\
2\rightarrow i_2\rightarrow n\rightarrow n-1\rightarrow n\rightarrow\cdots\rightarrow n(\rightarrow n-1).
\end{array}\right.$$
Hence $f(x_4)\le 1$ and
 \begin{equation}\label{eq325}
 f(y_4)=n-4-f(x_3)-f(y_3)-f(x_4)\ge n-7\ge 2.
 \end{equation}

If $y_3$ and $y_4$ have three nonzero entries, say, $a_{n, j_1}=a_{n, j_2}=a_{n, j_3}=1$ with $j_1<j_2<j_3\le n-2$ and $j_2\ge 3$, then $D$ has the following distinct walks of length $n-4$ between the same endpoints
$$\left\{\begin{array}{l}
(n-1\rightarrow) n\rightarrow n-1\rightarrow n\rightarrow\cdots\rightarrow n\rightarrow n-1\rightarrow n\rightarrow j_3,\\
(n-1\rightarrow) n\rightarrow n-1\rightarrow n\rightarrow\cdots\rightarrow n \rightarrow j_1\rightarrow j_2\rightarrow j_3.
\end{array}\right.$$
Combining this with (\ref{eq324}) and (\ref{eq325}) , we have
 $$f(y_4)=2,~~y_3=0,~~f(x_3)=2 \textrm{~~and~~}f(x_4)=1.$$
Suppose the nonzero entries in $y_4$ are $a_{n,j_1}$ and $a_{n,j_2}$ with $3\le j_1<j_2\le n-2$.
Then  $D$ has two distinct walks of length $n-4$ between the same endpoints in the following walks:
$$\left\{\begin{array}{l}
1  \rightarrow 3\rightarrow 4\rightarrow\cdots\rightarrow n-2,\\
1  \rightarrow n \rightarrow n-1\rightarrow n\rightarrow\cdots\rightarrow n \rightarrow j_2 \rightarrow n-2 ,\\
1  \rightarrow n \rightarrow n-1\rightarrow n\rightarrow\cdots\rightarrow n \rightarrow j_1\rightarrow j_2 \rightarrow n-2 ,
\end{array}\right.$$
where $j_2\rightarrow n-2$ does not appear when $j_2=n-2$.
This  contradicts the condition that $A\in \Gamma(n,n-4)$.

 If $A(n-1,n)$ is permutation similar to $K_{n-2}$, then we may assume
$$A=\begin{bmatrix}
             T_{n-4}&J_{n-4, 2}&y_{2}&y_{4}\\
             0&0&y_{1}&y_{3}\\
             x^{T}_{2}&x^{T}_{1}&\alpha&1\\
             x^{T}_{4}&x^{T}_{3}&1&\alpha'
\end{bmatrix},$$
where $x_1,x_3,y_1, y_3\in \mathbb{R}^{2}$, $x_2,y_2,x_4,y_4\in \mathbb{R}^{n-4}$. Applying similar arguments as above we can deduce $A\not\in \Gamma(n,n-4)$, which contradicts $D\in Ex(n,\mathscr{F}_{n-4})$.
Hence (\ref{eq322}) is a strict inequality and we get (\ref{eq3323}).

 On the other hand, applying Lemma \ref{le3} to $A(n)$, there exists some  $i\in\{1,2,\ldots,n-1\}$ such that  equality in  (\ref{eq3323})
 holds. Thus we get Claim 1.\\

\par

 \vskip 8pt
Now without loss of generality we assume
  $f(A(n-1,n))=\frac{(n-2)(n-3)}{2}-2$.  Then $A(i,n-1,n)\in \Gamma(n-3,n-4)$ and
  \begin{equation}\label{eqq26}
  f(A(i,n-1,n))\le\frac{(n-3)(n-4)}{2}\textrm{~~for all~~} 1\le i\le n-2.
  \end{equation}
   Next we prove the following claim.\\
  \par

{\it{\bf Claim 2.}
 Let $i\in\{1,2,\ldots,n-2\}$. Then
 \begin{equation}\label{eq326}
 f(A(i,n-1,n))\le\frac{(n-3)(n-4)}{2}-1.
 \end{equation}
Moreover, there exists some  $i\in\{1,2,\ldots,n-2\}$ such that  equality in (\ref{eq326}) holds.}\\
 \par

 Suppose equality in (\ref{eqq26}) holds for some $i$, say, $i=n-2$. By Theorem \ref{thh1}, $A(n-2,n-1,n)$ is permutation similar to $T_{n-3}$. Without loss of generality, we assume
$$A=\begin{bmatrix}
             T_{n-3}&x_{1}&x_{2}&x_{3}\\
             y^{T}_{1}&a_{n-2,n-2}&a_{n-2,n-1}&a_{n-2,n}\\
             y^{T}_{2}&a_{n-1,n-2}&a_{n-1,n-1}&a_{n-1,n}\\
             y^{T}_{3}&a_{n,n-2}&a_{n,n-1}&a_{n,n}
\end{bmatrix},$$where $x_i,y_i\in \mathbb{R}^{n-3}$, for $i=1,2,3$.
 Since $\delta_{n-2}\ge n-2$  and $$f(x_{1})+f(y_{1})+a_{n-2,n-2}=f(A(n-1,n))-f(A(n-2,n-1,n))=n-5,$$
  we have
  $$\sum\limits_{i=n-1,n}(a_{n-2,i}+a_{i,n-2})=\delta_{n-2}-[f(x_{1})+f(y_{1})+a_{n-2,n-2}]\ge 3. $$
  Then either $a_{n-2,n-1}+a_{n-1,n-2}=2$ or $a_{n-2,n}+a_{n,n-2}=2$.
  Without loss of generality, we assume $a_{n-1,n-2}=a_{n-2,n-1}=1$. By Lemma 1 (ii) of \cite{HZ1}, we obtain $a_{n-1,n-1}=a_{n-2,n-2}=0$ and
  $$f(x_1)+f(y_1)=n-5\ge 4.$$
  Then we have $f(x_1)\ge 3$ or $f(y_1)\ge 3$, or $f(x_1)=f(y_1)=2$.
  Applying Lemma \ref{le10} to $A(n)$, we get $D\notin  Ex(n,\mathscr{F}_{n-4})$, a contradiction.

 Therefore, (\ref{eqq26}) is a strict inequality and we have  (\ref{eq326}).
Moreover, applying Lemma \ref{le3} to $A(n-1,n)$ we get the second part of Claim 2. \\

\par

Without loss of generality we assume $$f(A(n-2,n-1,n))=\frac{(n-3)(n-4)}{2}-1.$$
For any  $i\in \{1,2,\ldots,n-3\}$,  since $A(i,n-2,n-1,n)\in \Gamma(n-4,n-4)$,   by Theorem \ref{thh1}  we have
\begin{equation}\label{eq327}
f(A(i,n-2,n-1,n))\le\frac{(n-4)(n-5)}{2}.
\end{equation}
Applying  Lemma \ref{le3} to $A(n-2,n-1,n)$,  there is some $i$, say, $i=n-3$ such that equality in (\ref{eq327}) holds. It follows that $A(n-3,n-2,n-1,n)$ is permutation similar to $T_{n-4}$ and we may assume
 $$A=(a_{ij})=\begin{bmatrix}
T_{n-4}&x_{n-3}&x_{n-2}&x_{n-1}&x_{n}\\
y_{n-3}^T&a_{n-3,n-3}&a_{n-3,n-2}&a_{n-3,n-1}&a_{n-3,n}\\
y_{n-2}^T&a_{n-2,n-3}&a_{n-2,n-2}&a_{n-2,n-1}&a_{n-2,n}\\
y_{n-1}^T&a_{n-1,n-3}&a_{n-1,n-2}&a_{n-1,n-1}&a_{n-1,n}\\
y_{n}^T&a_{n,n-3}&a_{n,n-2}&a_{n,n-1}&a_{n,n}
\end{bmatrix}$$ where $x_{i},y_{i}\in \mathbb{R}^{n-4}$  for $i=n,n-1,n-2,n-3$.

Since
\begin{eqnarray*}
 f(x_{n-3})+f(y_{n-3})+a_{n-3,n-3}
 =f(A(n-2,n-1,n))-f(A(n-3,n-2,n-1,n))
 =n-5,
\end{eqnarray*}
   applying Lemma \ref{le7}  to $A(n-2,n-1,n)$ we have $a_{n-3,n-3}=0$ and  \begin{eqnarray}\label{eqn328}
 f(x_{n-3})+f(y_{n-3})
 =n-5\ge 4.
\end{eqnarray}

   We assert
    \begin{equation}\label{eq328}
     a_{n-3,n-2}a_{n-2,n-3}=0.
    \end{equation}
  Otherwise  $a_{n-3,n-2}=a_{n-2,n-3}=1$. Applying Lemma \ref{le10} to $A(n-1,n)$ we can deduce $A\not\in \Gamma(n,n-4)$, a contradiction.

  Similarly, we have
  $$a_{n-3,i}a_{i,n-3}=0 \textrm{~~for~~} i=n-1,n.$$
  It follows that $$\sum\limits_{i=n-2}^{n}(a_{n-3,i}+a_{i,n-3})\le 3.$$ On the other hand, $$\sum\limits_{i=n-2}^{n}(a_{n-3,i}+a_{i,n-3})=\delta_{n-3}-f(x_{n-3})-f(y_{n-3})\ge n-2-(n-5)=3.$$
  Hence, we have $\sum\limits_{i=n-2}^{n}(a_{n-3,i}+a_{i,n-3})= 3$ and $$a_{n-3,i}+a_{i,n-3}=1\textrm{~~for~~} i=n-2,n-1,n.$$   Applying Lemma \ref{le7} to $A(n-3,n-1,n)$ we obtain $a_{n-2,n-2}=0$ and
   \begin{eqnarray*}
   &&  f(x_{n-2})+f(y_{n-2})\\
   && =f(A(n-1,n))-f(A(n-2,n-1,n))-a_{n-2,n-2}-a_{n-3,n-2}-a_{n-2,n-3}\\
   &&=n-5.
   \end{eqnarray*}

     Repeating the  above arguments, we get $$a_{n-2,i}+a_{i,n-2}=1\textrm{~~for~~} i=n-1,n$$
      and $$a_{n-1,n-1}=a_{nn}=0,~~a_{n-1,n}+a_{n,n-1}=1.$$
       Moreover, we have
   \begin{equation}\label{eq330}
   f(x_{n-i})+f(y_{n-i})=n-5\textrm{~~for~~}i=0,1,2,3.
   \end{equation}

Now we verify\\
\par

{ \it {\bf Claim 3.} $B\equiv A[n-3,n-2,n-1,n]$ is permutation similar to $T_{4}$.}\\
\par

It is well known that the adjacency matrix of an acyclic digraph is permutation similar to a strictly upper triangular matrix. Suppose Claim 3 does not hold.  Then the digraph $D(B)$  has at least one cycle. Note that $a_{ii}=0$ and $a_{ij}a_{ji}=0$ for $i,j=n-3,\ldots,n$. $D(B)$ has no loop or 2-cycle.  If $D(B)$ has a 4-cycle, then $B$ is permutation similar to
$$B'=\begin{bmatrix}
0&1&b_{13}&0\\
0&0&1&b_{24}\\
b_{31}&0&0&1\\
1&b_{42}&0&0
\end{bmatrix}.$$
Now $b_{24}=1$ implies  $D(B)$ has a cycle $1\rightarrow 2\rightarrow 4\rightarrow 1$ of length $3$;   $b_{42}=1$ implies  $D(B)$ has a cycle $2\rightarrow 3\rightarrow 4\rightarrow 2$ of length $3$. Therefore, $D$ always  has a 3-cycle with vertices from $\{n-3,n-2,n-1,n\}.$
Without loss of generality, we assume the 3-cycle is  $n-3\rightarrow n-2\rightarrow n-1\rightarrow n-3$.

 Applying Lemma \ref{le10} to $A(n)$, we get
$$f(x_{i})\le 2 \textrm{~~and~~} f(y_{i})\le 2 \textrm{~~ for~~} i=n-3,n-2,n-1.$$ By (\ref{eq330}) we have $n=9$ and
$$f(x_{i})=f(y_{i})=2 \textrm{~~for~~} i=n-3,n-2,n-1.$$
Suppose $a_{j_1,6}=a_{j_2,6}=a_{8,j_3}=a_{8,j_4}=1$ with $1\le j_1<j_2\le n-4=5$ and $1\le j_3<j_4\le 5$. Then $D$ has   two distinct walks from $j_1$ to $j_4$ of length $n-4$:
$$\left\{\begin{array}{l}
 j_1\rightarrow 6\rightarrow 7\rightarrow 8\rightarrow j_{3}\rightarrow j_4,\\
 j_1\rightarrow j_2\rightarrow 6\rightarrow 7\rightarrow 8\rightarrow j_4,
\end{array}\right.$$
a contradiction.
Therefore, $D(B)$ is acyclic  and Claim 3 holds. \\
\par

Without loss of generality, we assume
$$A=\begin{bmatrix}
T_{n-4}&x_{n-3}&x_{n-2}&x_{n-1}&x_{n}\\
y_{n-3}^T&0&1&1&1\\
y_{n-2}^T&0&0&1&1\\
y_{n-1}^T&0&0&0&1\\
y_{n}^T&0&0&0&0
\end{bmatrix}.$$

For $i=n,n-1,n-2,n-3$, let $a_{s_{i},i}$ be the last nonzero component in $x_{i}$ and $a_{i,t_{i}}$ be the first nonzero component in $y_{i}$, where $s_{i}\equiv 0$ if $x_{i}=0$ and $t_{i}\equiv n-3$  if $y_i=0$.  Applying Lemma \ref{le7} to $A(n-2,n-1,n)$, $A(n-3,n-1,n)$, $A(n-3,n-2,n)$  and $A(n-3,n-2,n-1)$,  we have
\begin{equation}\label{eqn331}
x_i=(a_i^T,0)^T,~~y_i=(0,b_i^T)^T \textrm{~~for~~} i=n-3,n-2,n-1,n,
\end{equation}
 where $a_i\in \mathbb{R}^{s_i}$, $b_i\in \mathbb{R}^{n-s_i-4}$. Moreover, by (\ref{eq330}) we have
 \begin{equation}\label{eq331}
s_{i}<t_{i}\le s_{i}+2\textrm{~~for~~}i=n,n-1,n-2,n-3.
\end{equation}

Next, we verify the following claim.\\
\par

{\it{\bf Claim 4.}  If $n-3\le i<j\le n$, then
\begin{equation}\label{eq332}
t_{j}>s_{i}+1.
\end{equation}}
\par

We assert
\begin{equation}\label{eq34}
a_{ij}=0 \textrm{~~for~~}n-5\le i\le n-4, n-3\le j\le i+3.
 \end{equation}
 Otherwise, $D$ has the following two distinct walks of length $n-4$ from $1$ to $j+1$ or $j+2$:
$$\left\{\begin{array}{l}
 1\rightarrow 2\rightarrow 4\rightarrow\cdots\rightarrow i\rightarrow j\rightarrow j+1 (\rightarrow j+2), \\
 1\rightarrow 3\rightarrow 4\rightarrow\cdots \rightarrow i\rightarrow j\rightarrow j+1 (\rightarrow j+2).
\end{array}\right.$$
 Similarly, we have $a_{i1}=0$ for $i=n-2,n-1,n$. Otherwise, $D$ has the following two distinct walks of length $n-4$ from $i-1$ to $n-4$:
$$\left\{\begin{array}{l}
 i-1\rightarrow i\rightarrow 1\rightarrow 2\rightarrow 4\rightarrow\cdots\rightarrow n-4,\\
 i-1\rightarrow i\rightarrow 1\rightarrow 3\rightarrow 4\rightarrow\cdots\rightarrow n-4.
\end{array}\right.$$

Given any $n-3\le i<j\le n$,
we have $$t_j\ge 2 \textrm{~~for~~} n-2\le j\le n$$ and $$s_{i}\le n-5 \textrm{~~for~~} n-3\le i\le n-1.$$ If $s_{i}=0$, then  $t_{j}>s_{i}+1$ and (\ref{eq332}) holds.  For $s_{i}\ge 1$, if (\ref{eq332}) does not hold, we can distinguish the following cases to find two distinct walks of length $n-4$ between the same endpoints to deduce contradictions.

{\it Subcase 1.}  $t_{j}= s_{i}+1$.
 If $s_{i}=1$, then $t_j=2$ and $D$ has
$$\left\{\begin{array}{l}
 1\rightarrow i\rightarrow j\rightarrow t_{j}\rightarrow 4\rightarrow 5\rightarrow \cdots\rightarrow n-4,\\
 1\rightarrow i\rightarrow j\rightarrow t_{j}\rightarrow 3\rightarrow 5\rightarrow \cdots\rightarrow n-4.
\end{array}\right.$$
If $s_{i}=2$ and $a_{1i}=0$, then by (\ref{eq330}) and (\ref{eqn331}), $t_j=3$, $t_{i}=s_{i}+1=3$ and $D$ has
$$\left\{\begin{array}{l}
 1\rightarrow 2\rightarrow i\rightarrow j\rightarrow t_{j}\rightarrow 5\rightarrow 6\rightarrow \cdots\rightarrow n-4,\\
 1\rightarrow 2\rightarrow i\rightarrow t_{i}\rightarrow 4\rightarrow 5\rightarrow \cdots\rightarrow n-4.
\end{array}\right.$$
If $s_{i}=2$ and $a_{1i}=1$, then   $D$ has
$$\left\{\begin{array}{l}
 1\rightarrow 2\rightarrow i\rightarrow j\rightarrow t_{j}\rightarrow 5\rightarrow 6\rightarrow \cdots\rightarrow n-4,\\
 1\rightarrow i\rightarrow j\rightarrow t_{j}\rightarrow 4\rightarrow 5\rightarrow 6\rightarrow \cdots\rightarrow n-4.
\end{array}\right.$$
If $s_{i}=3$ and $a_{2i}=0$, then $t_{i}=s_{i}+1=4$ and $D$ has
$$\left\{\begin{array}{l}
 1\rightarrow 3\rightarrow i\rightarrow j\rightarrow t_{j}\rightarrow 5\rightarrow 6\rightarrow \cdots\rightarrow n-4,\\
 1\rightarrow 2\rightarrow 3\rightarrow i\rightarrow t_{i}\rightarrow 5\rightarrow 6\rightarrow \cdots\rightarrow n-4.
\end{array}\right.$$
If $s_{i}=3$ and $a_{2i}=1$, then $D$ has
$$\left\{\begin{array}{l}
 1\rightarrow 3\rightarrow i\rightarrow j\rightarrow t_{j}\rightarrow 5\rightarrow 6\rightarrow \cdots\rightarrow n-4,\\
 1\rightarrow 2\rightarrow i\rightarrow j\rightarrow t_{j}\rightarrow 5\rightarrow 6\rightarrow \cdots\rightarrow n-4.
\end{array}\right.$$
If  $ 4\le s_{i}\le n-5$, then $D$ has
$$\left\{\begin{array}{l}
 1\rightarrow 3\rightarrow 4\rightarrow \cdots \rightarrow s_{i}\rightarrow i\rightarrow j\rightarrow t_{j} \rightarrow  t_{j}+1\rightarrow \cdots\rightarrow n-4 ,\\
 1\rightarrow 2\rightarrow 4\rightarrow \cdots \rightarrow s_{i}\rightarrow i\rightarrow j\rightarrow t_{j} \rightarrow  t_{j}+1\rightarrow \cdots\rightarrow n-4 ,
\end{array}\right.$$
 where the walk $t_j\rightarrow  t_{j}+1\rightarrow \cdots\rightarrow n-4$ does not appear when $s_i=n-5$.

{\it Subcase 2.} $t_{j}<s_{i}+1$. Since $t_{j}\ge 2$ for $j=n-2,n-1,n$, we have $s_{i}\ge 2$. If $s_i=2$ or $3$, $D$ has the same walks as in the previous subcase.
If $4\le s_{i}\le n-5$, the walks
$$\left\{\begin{array}{l}
 1\rightarrow 3\rightarrow 4\rightarrow \cdots\rightarrow s_{i}\rightarrow i\rightarrow j\rightarrow t_{j}\rightarrow  s_{i}+1\rightarrow \cdots\rightarrow n-4   \\
 1\rightarrow 2\rightarrow 4\rightarrow \cdots\rightarrow s_{i}\rightarrow i\rightarrow j\rightarrow t_{j}\rightarrow  s_{i}+1\rightarrow \cdots\rightarrow n-4
\end{array}\right.$$
 contain two walks of length $n-4$ with the same endpoints.

 Thus we obtain Claim 4.\\
 \par

Now we show

{\it {\bf Claim 5.}
$s_i\ne s_{j}$\textrm{ for }$n-3\le i<j\le n$.}\\
\par

 Suppose $s_k=s_{l}$ for some $n-3\le k<l\le n$. Then by the definition of $s_{i}$ we have
 \begin{equation}\label{eq335}
 a_{s_{k}+1,k}=a_{s_{k}+1,l}=0.
 \end{equation}
 From (\ref{eqn331}) we have
  \begin{equation}\label{eq336}
  a_{s_{k}+1,j}+a_{j,s_{k}+1}\le 1\textrm{~~for~~ }j=n-3,n-2,n-1,n.
  \end{equation}
By (\ref{eq321}),
 $$\delta_{s_{k}+1}=n-5+\sum\limits_{j=n-3}^{n}(a_{s_{k}+1,j}+a_{j,s_{k}+1})\ge n-2.$$
Combining this with (\ref{eq335}) and (\ref{eq336}) we have
 $$a_{k,s_{k}+1}+a_{l,s_{k}+1}\ge1.$$
    By (\ref{eq331}) and (\ref{eq332}), we have $t_{l}=s_{l}+2$. Hence, $$a_{l,s_{k}+1}=0\textrm{~~and~~ } a_{k,s_{k}+1}=1.$$
     It follows that
     \begin{equation}\label{eq337}
     t_{k}=s_{k}+1.
     \end{equation}
  If $s_{k}=0$, then $D$ has two distinct walks of length $n-4$ from $k$ to $n-4$:
$$\left\{\begin{array}{l}
 k\rightarrow 1\rightarrow 2\rightarrow \cdots \rightarrow  n-4,\\
 k\rightarrow l\rightarrow 2\rightarrow \cdots \rightarrow n-4,
\end{array}\right.$$
a contradiction.
 If $1\le s_{k}\le n-6$,  then $D$ has two distinct walks of length $n-4$ from $1$ to $n-4$:
$$\left\{\begin{array}{l}
 1\rightarrow 2\rightarrow \cdots \rightarrow s_{k}\rightarrow k\rightarrow l\rightarrow t_{l}\rightarrow t_{l}+1\rightarrow\cdots\rightarrow n-4,\\
 1\rightarrow 2\rightarrow \cdots \rightarrow s_{k}\rightarrow k\rightarrow t_{k}\rightarrow t_{k}+1\rightarrow\cdots\rightarrow n-4,
\end{array}\right.$$
a contradiction. Hence we have $s_k\ge n-5$.

On the other hand, by (\ref{eq34}) we have
\begin{equation*}
s_{n-3}\le n-6,  s_{n-2}\le n-6\textrm{ and }s_{n-1}\le n-5.
 \end{equation*}
 It follows that $$s_{n-3}\ne s_{n-2},~~k=n-1,~~l=n\textrm{ ~~and~~} s_k=n-5.$$
Moreover, there exists  $s_m\ge 1$ with $m\in \{n-3,n-2\}$.
  Now we can distinguish the following cases to find distinct walks of length $n-4$ with the same endpoints in $D$ to deduce contradictions.

If $t_{m}=s_{m}+1$, then by (\ref{eq337}) $D$ has
$$\left\{\begin{array}{l}
 1\rightarrow 2\rightarrow \cdots \rightarrow s_{m}\rightarrow m\rightarrow s_{m}+1\rightarrow\cdots\rightarrow n-4,\\
 1\rightarrow 2\rightarrow \cdots \rightarrow s_{k}\rightarrow k\rightarrow n-4.
\end{array}\right.$$
If $s_{m}+2=t_{m}\le n-5$, then $D$ has
$$\left\{\begin{array}{l}
 1\rightarrow 2\rightarrow \cdots \rightarrow s_{m}\rightarrow m\rightarrow t_{m}\rightarrow \cdots\rightarrow s_k\rightarrow k\rightarrow n-4,\\
 1\rightarrow 2\rightarrow \cdots \rightarrow s_{k}\rightarrow k\rightarrow n-4.
\end{array}\right.$$
If $s_{m}+2=t_{m}= n-4$,  then $s_{m}=n-6$ and $D$ has
$$\left\{\begin{array}{l}
 1\rightarrow 2\rightarrow \cdots \rightarrow s_m\rightarrow m\rightarrow k\rightarrow n-4,\\
 1\rightarrow 2\rightarrow \cdots \rightarrow s_{k}\rightarrow k\rightarrow n-4.
\end{array}\right.$$
Hence we get Claim 5.\\

\par

  Combining Claim 4 and Claim 5  we obtain
  $$s_{j}>s_{i} \textrm{~~for~~} n-3\le i<j\le n.$$
   Otherwise,  $s_{j}<s_{i}$ leads to $t_{j}\le s_{j}+2< s_{i}+2$, which contradicts  Claim 4. Therefore, we have
\begin{equation}\label{eq338}
0\le s_{n-3}<s_{n-2}<s_{n-1}<s_{n}\le n-4.
\end{equation}

Finally, we verify

  {\it {\bf Claim 6.} $t_{i}=s_{i}+2$ for $i\in\{n-3,n-2,n-1,n\}$.}\\

 \par
Suppose $t_{n}=s_{n}+1$. By (\ref{eq338}) and (\ref{eq331}) we have $1\le s_{n-2}\le n-6$ and $s_{n}\ge t_{n-2}$. We can distinguish the following cases to find  two distinct walks of length $n-4$ from $1$ to $n-4$ or $n$ in $D$, which contradicts $D\in Ex(n,\mathscr{F}_{n-4})$.
If $t_{n-2}=s_{n-2}+2$,  then $D$ has
$$\left\{\begin{array}{l}
 1\rightarrow 2\rightarrow \cdots\rightarrow s_{n}\rightarrow n \rightarrow s_{n}+1
 \rightarrow s_{n}+2\rightarrow \cdots\rightarrow n-4 , \\
 1\rightarrow 2\rightarrow \cdots\rightarrow s_{n-2}\rightarrow n-2\rightarrow s_{n-2}+2\rightarrow \cdots
 \rightarrow s_{n}\rightarrow n \rightarrow s_{n}+1\rightarrow \cdots\rightarrow n-4 ,
\end{array}\right.$$ where the walk $n \rightarrow s_{n}+1\rightarrow \cdots\rightarrow n-4$ does not appear when $s_n=n-4$.
If $t_{n-2}=s_{n-2}+1$ and $s_n\le n-5$,
then $D$ has
$$\left\{\begin{array}{l}
 1\rightarrow 2\rightarrow \cdots\rightarrow s_{n}\rightarrow n\rightarrow s_{n}+1
 \rightarrow s_{n}+2\rightarrow \cdots\rightarrow n-4,\\
 1\rightarrow 2\rightarrow \cdots\rightarrow s_{n-2}\rightarrow n-2\rightarrow s_{n-2}+1
 \rightarrow s_{n-2}+2\rightarrow \cdots\rightarrow n-4.
\end{array}\right.$$
If $t_{n-2}=s_{n-2}+1$ and $s_n= n-4$, then $a_{n-4,n}=1 $ and $D$ has
$$\left\{\begin{array}{l}
 1\rightarrow 2\rightarrow \cdots\rightarrow n-5\rightarrow n-4\rightarrow n,\\
 1\rightarrow 2\rightarrow \cdots\rightarrow s_{n-2}\rightarrow n-2\rightarrow t_{n-2}\rightarrow t_{n-2}+2\rightarrow \cdots\rightarrow n-4\rightarrow n, \textrm{ if }s_{n-2}\le n-7,\\
    1\rightarrow 3\rightarrow \cdots\rightarrow s_{n-2}\rightarrow n-2\rightarrow t_{n-2}\rightarrow t_{n-2}+1\rightarrow \cdots\rightarrow n-4\rightarrow n, \textrm{ if }s_{n-2}= n-6.
\end{array}\right.$$
 Hence, $t_{n}=s_{n}+2$ and $s_n\le n-5$.

Next suppose  $t_{i}=s_{i}+1$   for $i\in\{n-2,n-1\}$ and $j\in \{n-2,n-1\}\setminus\{i\}$. Then $s_i,s_j>0$.
 If $t_j=s_j+1$, then $D$ has the following two distinct walks of length $n-4$ from $1$ to $n-4$:
 $$\left\{\begin{array}{l}
 1\rightarrow 2 \rightarrow \cdots \rightarrow   s_{i}\rightarrow i\rightarrow s_{i}+1\rightarrow \cdots\rightarrow n-4,\\
 1\rightarrow 2 \rightarrow \cdots \rightarrow    s_{j}\rightarrow j\rightarrow s_{j}+1\rightarrow \cdots\rightarrow n-4,
\end{array}\right.$$
a contradiction. If $t_j=s_j+2$, then $D$ has the following two distinct walks of length $n-4$ from $1$ to $n-4$:
$$\left\{\begin{array}{l}
 1 \rightarrow \cdots \rightarrow  s_{i}\rightarrow i\rightarrow s_{i}+1\rightarrow \cdots\rightarrow n-4,\\
 1  \rightarrow \cdots \rightarrow s_{i}\rightarrow i\rightarrow s_{i}+1\rightarrow \cdots\rightarrow  s_{j}\rightarrow j\rightarrow s_{j}+2\rightarrow \cdots\rightarrow n-4,~~\textrm{if}~~i<j,\\
  1  \rightarrow \cdots \rightarrow s_{j}\rightarrow j\rightarrow s_{j}+2\rightarrow \cdots\rightarrow  s_{i}\rightarrow i\rightarrow s_{i}+1\rightarrow \cdots\rightarrow n-4,~~\textrm{if}~~i>j \textrm{ and } s_i\ge s_j+2\\
   1  \rightarrow \cdots \rightarrow s_{j}\rightarrow j\rightarrow i\rightarrow s_{i}+1\rightarrow  \cdots\rightarrow  n-4,~~\textrm{if}~~i>j \textrm{ and } s_i=s_j+1,
\end{array}\right.$$
  a contradiction.  Hence, we get
  $$t_{n-2}=s_{n-2}+2,\quad t_{n-1}=s_{n-1}+2.$$

  Now we  conclude $t_{n-3}=s_{n-3}+2$.  Otherwise $D$ has the following two distinct walks of length $n-4$ from $1$ or $n-3$ to $n-4$:
  $$\left\{\begin{array}{l}
  1\rightarrow 2\rightarrow   \cdots \rightarrow s_{n-3}\rightarrow  n-3\rightarrow t_{n-3}\rightarrow \cdots\rightarrow n-4,\\
  1\rightarrow 2\rightarrow   \cdots \rightarrow s_{n-3}\rightarrow  n-3\rightarrow t_{n-3}\rightarrow \cdots\rightarrow s_{n-1}\rightarrow n-1\rightarrow s_{n-1}+2\rightarrow \cdots\rightarrow  n-4,
\end{array}\right.$$
 where the walk $1\rightarrow 2 \rightarrow \cdots \rightarrow  s_{n-3} \rightarrow n-3$ does not appear when $s_{n-3}=0$.
Thus we get Claim 6.\\
\par

Finally, combining (\ref{eq330}) and Claim 6 we have $A=F_4(n)$. This completes the proof.

 \end{proof}

From the proof of Theorem \ref{th9}, we have the following corollary.
\begin{corollary}\label{co1}
 Let $k\ge 5$ be an integer, $n=k+4$ and
   $$A=(a_{ij})=\begin{bmatrix}
T_{k}&B\\
C&E
\end{bmatrix}\in \Gamma(n,k).$$
 If
\begin{eqnarray*}
 f(A)=\frac{n(n-1)}{2}-4, &&f(A(n))=\frac{(n-1)(n-2)}{2}-3,\\
   f(A(n-1,n))=\frac{(n-2)(n-3)}{2}-2,  &&f(A(n-2,n-1,n))=\frac{(n-3)(n-4)}{2}-1,
 \end{eqnarray*}
 then $A$ is permutation similar to $F_4(n)$  by permuting its last 4 rows and columns.
 \end{corollary}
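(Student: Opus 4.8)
The plan is to recognise that the four cardinality hypotheses, together with the block form $A=\begin{bmatrix}T_k&B\\C&E\end{bmatrix}$ --- which in fact fixes the leading principal submatrix $A(n-3,n-2,n-1,n)=T_{n-4}=T_k$ outright, supplying also the fifth value $f(A(n-3,n-2,n-1,n))=\binom{n-4}{2}$ --- place us in exactly the configuration reached partway through Case~2 of the proof of Theorem~\ref{th9}, namely at the point where the inductive descent (Claims~1--2 there) has already been carried out and the cascade
$$f(A)=\binom{n}{2}-4,\ \ f(A(n))=\binom{n-1}{2}-3,\ \ f(A(n-1,n))=\binom{n-2}{2}-2,\ \ f(A(n-2,n-1,n))=\binom{n-3}{2}-1$$
is in force. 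So one writes $A$ in the block form
$$A=\begin{bmatrix}
T_{n-4}&x_{n-3}&x_{n-2}&x_{n-1}&x_{n}\\
y_{n-3}^{T}&a_{n-3,n-3}&a_{n-3,n-2}&a_{n-3,n-1}&a_{n-3,n}\\
y_{n-2}^{T}&a_{n-2,n-3}&a_{n-2,n-2}&a_{n-2,n-1}&a_{n-2,n}\\
y_{n-1}^{T}&a_{n-1,n-3}&a_{n-1,n-2}&a_{n-1,n-1}&a_{n-1,n}\\
y_{n}^{T}&a_{n,n-3}&a_{n,n-2}&a_{n,n-1}&a_{n,n}
\end{bmatrix},\qquad x_i,y_i\in\mathbb{R}^{n-4},$$
reads off $f(x_i)+f(y_i)+a_{ii}=n-5$ for $i\in\{n-3,n-2,n-1,n\}$ by subtracting consecutive members of the cascade, and (exactly as in Case~2 of that proof) records the degree bounds $\delta_i:=f(A)-f(A(i))\ge n-2$ for every $i$. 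From here the arguments of Claims~3--6 of the proof of Theorem~\ref{th9} apply verbatim, and it remains only to run through them.

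Concretely: Lemma~\ref{le7} applied to $A(n-2,n-1,n)$ forces $a_{n-3,n-3}=0$, hence $f(x_{n-3})+f(y_{n-3})=n-5$, and symmetrically $a_{ii}=0$ for $i=n-2,n-1,n$; Lemma~\ref{le10} applied to the relevant $(n-1)\times(n-1)$ principal submatrices rules out $a_{ij}=a_{ji}=1$ for $n-3\le i<j\le n$, so the degree counts give $a_{ij}+a_{ji}=1$ for every such pair and in particular $a_{n-1,n}+a_{n,n-1}=1$. Consequently the $4\times4$ core $A[n-3,n-2,n-1,n]$ has no loop and no $2$-cycle; a $4$-cycle would force a $3$-cycle (the $b_{24},b_{42}$ argument of Claim~3), and a $3$-cycle among these four vertices, together with Lemma~\ref{le10} applied to $A(n)$, would force $n=9$ and $f(x_i)=f(y_i)=2$, which exhibits two distinct walks of length $n-4=k$ with the same endpoints --- a contradiction. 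Hence this core is acyclic, and after permuting the last four rows and columns we may assume it equals $T_4$.

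For the final identification, put $s_i=$ the index of the last nonzero entry of $x_i$ and $t_i=$ the index of the first nonzero entry of $y_i$ (with $s_i=0$ when $x_i=0$ and $t_i=n-3$ when $y_i=0$). Lemma~\ref{le7}, applied successively to $A(n-2,n-1,n)$, $A(n-3,n-1,n)$, $A(n-3,n-2,n)$ and $A(n-3,n-2,n-1)$, gives $x_i=(a_i^{T},0)^{T}$, $y_i=(0,b_i^{T})^{T}$ and $s_i<t_i\le s_i+2$ for each of the four indices. Then Claims~4--6 of the proof of Theorem~\ref{th9} --- each a walk-comparison step that produces two distinct walks of length $k$ with common endpoints unless the asserted inequality or equality holds --- yield $0\le s_{n-3}<s_{n-2}<s_{n-1}<s_{n}\le n-5$ and $t_i=s_i+2$ for all four indices, and this says precisely that $A=F_4(n)$ with $(k_1,k_2,k_3,k_4)=(s_{n-3},s_{n-2},s_{n-1},s_{n})$. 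The only genuine labour is this last block of walk comparisons (Claims~4 and~6), which would be the main obstacle in a fresh proof; but since it is already written out in the proof of Theorem~\ref{th9}, the corollary requires no new idea --- its entire content is that the block hypothesis on $A$ lets one bypass the inductive descent of that proof and begin directly at Claim~3.
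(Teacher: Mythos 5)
Your reconstruction follows the only route the paper itself indicates (the paper gives no proof beyond the remark that the corollary follows from the proof of Theorem \ref{th9}, and Claims 3--6 of that proof are indeed what must be run), but there is a genuine gap at the point where you ``record the degree bounds $\delta_i=f(A)-f(A(i))\ge n-2$ for every $i$ exactly as in Case 2 of that proof.'' In Case 2 of the proof of Theorem \ref{th9} those bounds are not derived: they are the defining hypothesis of that case, namely $f(A(i))\le \frac{(n-1)(n-2)}{2}-3$ for all $i$, and that hypothesis does not appear among the hypotheses of the corollary. From $A(i)\in\Gamma(n-1,n-4)$ and Theorem \ref{leh6} one only gets $f(A(i))\le\frac{(n-1)(n-2)}{2}-2$, i.e.\ $\delta_i\ge n-3$, which is one short of what Claims 3--6 consume: the bound $\delta_i\ge n-2$ is exactly what forces $a_{ij}+a_{ji}=1$ for $n-3\le i<j\le n$ (hence $f(x_i)+f(y_i)=n-5$ for all four of the last indices, not just for $i=n-3$), and it is used again in Claim 5 for indices $i\le n-4$.

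This is not a removable technicality, because the stated hypotheses do not imply the conclusion. Take $F_3(n)$ with $m=0$; its four non-adjacent pairs are $\{1,2\}$, $\{n-2,n-1\}$, $\{3,n\}$, $\{4,n\}$, and its vertex $n$ has $\delta_n=n-3$. Reorder the vertices so that the first $n-4$ positions carry $1,n,5,6,\ldots,n-3,n-2$ (which induce $T_{n-4}$) and the last four carry $2,3,4,n-1$ (which induce $T_4$, are pairwise adjacent, and each have $\delta=n-2$). The resulting matrix lies in $\Gamma(n,n-4)$, has the required block form with leading block $T_k$, and satisfies all four displayed equalities of the corollary; yet it is not permutation similar to $F_4(n)$, since two of its last four vertices are non-adjacent to the \emph{same} vertex of the $T_{n-4}$ block, whereas the four non-adjacent pairs of $F_4(n)$ form a perfect matching (equivalently, the parameters $k_1<k_2<k_3<k_4$ must be distinct). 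So the argument cannot begin at Claim 3 on the stated hypotheses alone: one must either add the hypothesis $f(A(i))\le\frac{(n-1)(n-2)}{2}-3$ for all $i$ (which is what is actually available where the corollary is invoked), or explicitly rule out the Case-1 configurations $F_1(n),F_2(n),F_3(n)$. Your proposal does neither.
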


\section{Proof of Theorem \ref{thh2}}
In this section we give the proof of Theorem \ref{thh2}. We will use induction on $n$. First we need the following lemma to show that Theorem \ref{thh2} holds for $k=5$.

\begin{lemma}\label{le14}
 Let $n\ge 10$ be an integer. Then
\begin{equation}\label{eq339}
ex(n,\mathscr{F}_{n-5})=\frac{n(n-1)}{2}-5.
 \end{equation}
 Moreover, a digraph $D$ is in $Ex(n,\mathscr{F}_{n-5})$  if and only if $D$ is a $(1,n-5,5)$-completely transitive tournament.
 \end{lemma}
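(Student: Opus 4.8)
The plan is to prove the size formula (\ref{eq339}) together with the classification of the extremal digraphs, treating the bound and the construction first. For the upper bound, if $D$ is $\mathscr{F}_{n-5}$-free of order $n$ with adjacency matrix $A$, then every principal submatrix $A(i)$ lies in $\Gamma(n-1,(n-1)-4)$, so $f(A(i))\le ex(n-1,\mathscr{F}_{(n-1)-4})=\binom{n-1}{2}-4$ by Theorem \ref{le8} (valid since $n-1\ge 8$); feeding this into Lemma \ref{le3} with $p=0$, $q=4$ yields $f(A)\le\binom{n}{2}-5$. For the matching lower bound and the ``if'' direction, take a $(1,n-5,5)$-completely transitive tournament: its adjacency matrix $A$ is a principal submatrix of $\Pi_{2,n-5}=J_2\otimes T_{n-5}$, and since $(J_2\otimes T_{n-5})^{n-5}=J_2^{n-5}\otimes T_{n-5}^{n-5}=0$ ($T_{n-5}$ being nilpotent) we get $A^{n-5}=0$, so $D$ is $\mathscr{F}_{n-5}$-free; exactly $5$ unordered pairs of vertices are non-adjacent (one per two-element super-vertex), so $f(A)=\binom{n}{2}-5$.

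For the reverse inclusion, let $D\in Ex(n,\mathscr{F}_{n-5})$ with $A=A_D$. Lemma \ref{le3} with $p=0$, $q=5$ (used contrapositively) together with the bound $f(A(i))\le\binom{n-1}{2}-4$ shows that some $A(i)$, which we may take to be $A(n)$, attains $\binom{n-1}{2}-4$; hence $A(n)\in Ex(n-1,\mathscr{F}_{(n-1)-4})$ and, by Theorem \ref{th9} (legitimate since $n-1\ge 9$), $A(n)$ is permutation similar to one of $F_1(n-1),F_2(n-1),F_3(n-1),F_4(n-1)$. Writing $A=\left[\begin{smallmatrix}A(n)&x\\ y^{T}&\alpha\end{smallmatrix}\right]$ one has $f(x)+f(y)+\alpha=n-2$. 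I would rule out the first three shapes as follows: in each case $A$ — or, for $F_3(n-1)$, the matrix obtained by deleting the ``special'' vertex — has the block form of Lemma \ref{le4} (with $(k,s)=(3,2),(2,3),(2,2)$ respectively, the numerical hypothesis holding because $n\ge 10$), so $\alpha=0$, the source vertices receive nothing from $n$, and the sink vertices send nothing to $n$. For $F_1$ and $F_2$ this already forces $\delta_v\le n-3$ for a source (resp.\ sink) vertex $v$, whence $f(A(v))=f(A)-\delta_v\ge\binom{n-1}{2}-3>\binom{n-1}{2}-4=ex(n-1,\mathscr{F}_{(n-1)-4})$, contradicting $A(v)\in\Gamma(n-1,n-5)$. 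For $F_3$ the same estimate instead forces every source, every sink, and the special vertex $z$ to be adjacent to $n$, and applied to $z$ (which has degree $n-4$ in $F_3(n-1)$) it forces the $2$-cycle $z\leftrightarrow n$; combining this $2$-cycle with the arcs $\mathrm{source}\to n$, $n\to\mathrm{sink}$ and $\mathrm{source}\to z\to\mathrm{sink}$ of $F_3(n-1)$ produces two distinct walks of length $n-5$ from a source to a sink, contradicting $A\in\Gamma(n,n-5)$. Hence $A(n)$ is $F_4(n-1)$, the $(1,n-5,4)$-completely transitive tournament.

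It remains to show $A$ is then the $(1,n-5,5)$-completely transitive tournament. Since $F_4(n-1)$ is acyclic, has independence number $2$, and all of its walks have length $\le n-6$, every length-$(n-5)$ walk of $A$ must traverse $n$. First, $\alpha=0$: if $\alpha=1$ then $f(x)\ge 3$ (or $f(y)\ge 3$) would yield two adjacent in-neighbours (or out-neighbours) of $n$ and hence, after padding with the loop, two distinct length-$(n-5)$ walks; so $\delta_n=f(x)+f(y)+1\le 5<n-2$, absurd. Next, $n$ lies on no $2$-cycle: a $2$-cycle $v\leftrightarrow n$ would allow amplifying a suitable walk of $A(n)$ along this $2$-cycle into two distinct length-$(n-5)$ walks with the same endpoints. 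Consequently $n$ is one-way adjacent to exactly $n-2$ vertices and non-adjacent to exactly one vertex $v$. Restricting $A$ to the $T_{n-5}$-block of $F_4(n-1)$ together with $n$ gives a matrix of the shape treated by Lemma \ref{le7}, and since $n$ carries no $2$-cycle its degree into that block is $n-6$ or $n-5$, so Lemma \ref{le7} pins $n$'s neighbours in that block down to an initial/final segment — placing $n$ at a definite super-vertex position. A final step rules out the ``$n-5$'' alternative and checks $n$'s four remaining adjacencies, showing that $\{v,n\}$ is a pair carrying the neighbourhood of the former singleton $\{v\}$; that is, $A$ is the $(1,n-5,5)$-completely transitive tournament obtained from $F_4(n-1)$ by restoring $v$'s partner.

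The hardest part is this last case: eliminating $2$-cycles through $n$ and then determining $n$'s neighbourhood exactly, which needs an intricate analysis of pairs of length-$(n-5)$ walks sorted by the position of $v$ among the super-vertices of $F_4(n-1)$. Among the three ``easy'' subcases, $F_3(n-1)$ is the most delicate, because the forbidden walk pair only materializes after one has first manufactured the $2$-cycle $z\leftrightarrow n$ from the submatrix counts.
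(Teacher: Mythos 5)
Your upper bound, your construction of the $(1,n-5,5)$-completely transitive tournament, the identification of some $A(n)$ attaining $\binom{n-1}{2}-4$ (hence equal to one of $F_1(n-1),\dots,F_4(n-1)$ by Theorem \ref{th9}), and your elimination of $F_1,F_2,F_3$ all match the paper's proof; your walk pair for the $F_3$ case (alternating along the $2$-cycle $z\leftrightarrow n$, entering from a source via either $z$ or $n$ and exiting to a sink from either) is a clean and correct variant of the paper's. The problem is the main case $A(n)=F_4(n-1)$, where your sketch has genuine gaps. First, ``$n$ lies on no $2$-cycle'' is asserted but not proved: for the four special vertices $n-1,\dots,n-4$ the paper gets this from Lemma \ref{le10} (using that each has $n-6\ge 4$ neighbours inside the $T_{n-5}$ block), but for a $2$-cycle between $n$ and a vertex $j\le n-5$ of the block a separate walk construction is needed (and for small $j$ the obvious ``skip vertex $2$ versus skip vertex $3$'' trick has to be adapted); you give no argument for either subcase. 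Second, when the unique non-neighbour $v$ of $n$ lies in the block, your restriction falls under Lemma \ref{le7}(i), whose conclusion is only the support condition $x=(a^T,0)^T$, $y=(0,b^T)^T$ with one arbitrary missing entry --- it does \emph{not} pin $n$'s neighbours down to an initial segment followed by a final segment, since the single gap could sit strictly inside $a$ or $b$; ruling that out requires interaction with the other four special vertices, which you never invoke.

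Third, and most importantly, the step you compress into ``checks $n$'s four remaining adjacencies'' is where the real content of this case lives: one must show that the $5\times 5$ corner on $\{n-4,\dots,n\}$ is a transitive tournament and that the orientation of each arc between $n$ and $n-i$ is forced by the relative order of $n$'s insertion position $q$ and the positions $k_1<\dots<k_4$ (together with $q\notin\{k_1,\dots,k_4\}$). The paper does this by proving $f(A(n-i))=\binom{n-1}{2}-4$ for each $i$, applying Corollary \ref{co1} to each $A(n-i)$ to see that every $4\times 4$ principal submatrix of the corner is permutation similar to $T_4$, and then quoting Lemma 9 of \cite{HZ1} to conclude the corner is $T_5$; nothing in your outline substitutes for this. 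As written, your Case 4 would also be consistent with, say, a corner containing a $3$-cycle or with $n$'s block-neighbourhood having an interior gap, so the proof is incomplete precisely at the step that distinguishes the completely transitive tournaments from other candidates.
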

\begin{proof}
Let $D$  be any $ \mathscr{F}_{n-5}$-free digraph of order $n$.
Denote by $A\equiv A_D$.
Given any $i\in \{1,2,\ldots,n\}$, since $A(i)\in \Gamma(n-1,n-5)$, by Theorem \ref{le8}
we have
\begin{equation}\label{eqq41}
f(A(i))\le \frac{(n-1)(n-2)}{2}-4.
\end{equation}
Applying Lemma \ref{le3} to $A$, we have
$$f(A)\le \frac{n(n-1)}{2}-5.$$
Hence,
 $$ex(n,\mathscr{F}_{n-5})\le \frac{n(n-1)}{2}-5.$$

Let $D$ be any $(1,n-5,5)$-completely transitive tournament. Then $A_D$ is a principal submatrix of
 $A'=
J_2\otimes T_{n-5}
 $. Since $A'(\alpha)\in\Gamma(n,n-5)$, the digraph $D(A')$, and hence $D$ is $\mathscr{F}_{n-5}$-free. It is clear that
 $D$ has size  $\frac{n(n-1)}{2}-5$. Thus we get (\ref{eq339}) and the sufficiency of the second part.
\\
\par

Let $D\in Ex(n,\mathscr{F}_{n-5})$ and  $A\equiv A_D$.
Again, denote by $\delta_i$ the number of nonzero entries lying in the $i$-th row and the $i$-th column of $A$. Then by (\ref{eqq41}),
\begin{equation}\label{eq40}
\delta_i=f(A)-f(A(i))\ge n-2 \textrm{~~for all~~}1\le i\le n.
\end{equation}

Applying Lemma \ref{le3} we get
$$f(A(i_0))\ge
\frac{(n-1)(n-2)}{2}-4$$ for some $i_0\in \{1,2,\ldots,n\}$.
Without loss of generality, we assume $i_0=n$. By Theorem \ref{th9}, $A(n)$ is permutation similar to   $F_t(n-1)$ with $t\in\{1,2,3,4\}$.
 Now we distinguish four cases.

{\it Case 1.} $A(n)$ is permutation similar to $F_1(n-1)$. Without loss of generality, we assume
$$A=\begin{bmatrix}
             0&J_{3,n-6}&J_{3,2}&x_{1}\\
             0&T_{n-6}&J_{n-6,2}&x_{2}\\
             0&0&0&x_{3}\\
             y_{1}^{T}&y_{2}^{T}&y_{3}^{T}&\alpha
\end{bmatrix},$$ where $x_{1},y_{1}\in \mathbb{R}^{3}$, $x_{2},y_{2}\in \mathbb{R}^{n-6}$, $x_{3}, y_{3}\in \mathbb{R}^{2}$.

 Since $$\delta_{n}=\sum\limits_{i=1}^{3}[f(x_{i})+f(y_{i})]+\alpha
=f(A)-f(A(n))=n-2,$$   applying Lemma \ref{le4} to $A$ we have
$$y_{1}=0\textrm{ and }x_{3}=0.$$
Then $\delta_1\le  n-3$, which contradicts (\ref{eq40}).

 {\it Case 2.} $A(n)$ is  permutation similar to $F_2(n-1)$.
  Applying the same arguments as in Case 1 we get $\delta_{n-2}\le n-3$, a contradiction.

  {\it Case 3.} $A(n)$ is  permutation similar to $F_3(n-1)$. Without loss of generality, we assume
  $$A=\begin{bmatrix}
             0&J_{2,n-6}&J_{2,2}&J_{2,1}&x_1\\
             0&T_{n-6}&J_{n-6,2}&U_{m}&x_2\\
             0&0&0&0&x_3\\
             0&U'_{m}&J_{1,2}&0&a_{n-1,n}\\
             y_1^T&y_2^T&y_3^T&a_{n,n-1}&a_{nn}
\end{bmatrix}$$
where $x_1,x_3,y_1,y_3\in \mathbb{R}^2$, $x_2,y_2\in \mathbb{R}^{n-6}$, $U_{m}=(J_{1,m},0)^T$, $U'_{m}=(0,J_{1,n-m-8})$, $0\le m\le n-8$.
By (\ref{eq40}), $$\delta_{n-1}=n-4+a_{n-1,n}+a_{n,n-1}\ge n-2$$
implies $a_{n-1,n}=a_{n,n-1}=1$.
Applying  Lemma \ref{le4} to $A(n-1)$ we get $x_3=0$ and $y_1=0$. Hence, $\delta_1\ge n-2$ and $\delta_2\ge n-2$ force  $x_1=J_{2,1}$. Then $D$ has the following two distinct walks of length $n-5$ from 1 to $n-1$ or $n$:
  $$\left\{\begin{array}{l}
 1\rightarrow 2\rightarrow n-1   \rightarrow n \rightarrow n-1\rightarrow\cdots \rightarrow n-1(\rightarrow n),\\
 1\rightarrow n\rightarrow n-1   \rightarrow n \rightarrow n-1\rightarrow\cdots \rightarrow n-1(\rightarrow n),
\end{array}\right.$$
a contradiction.

  {\it Case 4.} $A(n)$ is  permutation similar to $F_4(n-1)$. Without loss of generality we assume
$$A=(a_{ij})=\begin{bmatrix}
T_{n-5}&w_{4}&w_{3}&w_{2}&w_{1}&x\\
u_{4}&0&1&1&1&a_{n-4,n}\\
u_{3}&0&0&1&1&a_{n-3,n}\\
u_{2}&0&0&0&1&a_{n-2,n}\\
u_{1}&0&0&0&0&a_{n-1,n}\\
y^{T}&a_{n,n-4}&a_{n,n-3}&a_{n,n-2}&a_{n,n-1}&a_{n,n}
\end{bmatrix}\equiv \begin{bmatrix} T_{n-5}&B\\C&E\end{bmatrix},$$ where $x,y\in \mathbb{R}^{n-5}$, $$w_{i}=(J_{1,q_i},0)^T,~~  u_{i}=(0,J_{1,n-q_i-6}) \textrm{~~for~~} i\in \{1,2,3,4\} $$  with $0\leq q_4<q_3<q_2<q_1\leq n-6$.

We claim
\begin{equation}\label{eq41}
a_{n-i,n}a_{n,n-i}=0 \textrm{~~for~~}i=1,2,3,4.
\end{equation}
Otherwise suppose
  $a_{n-i,n}=a_{n,n-i}=1$ for some $i\in \{1,2,3,4\}$.
   Set $$\alpha=\{1,2,\ldots,n-5,n-i,n\}.$$
   Applying Lemma \ref{le10} to $A[\alpha]$ we get
     $A\not\in \Gamma(n,n-5)$, a contradiction. Thus we obtain (\ref{eq41}).

On the other hand,   by (\ref{eq40}) we have
$$a_{n,n-i}+a_{n-i,n}=\delta_{n-i}-f(w_i)-f(u_i)-3\ge 1\textrm{~~for~~} i\in \{1,2,3,4\}.$$
 Hence, we have $a_{n,n-i}+a_{n-i,n}=1$, and
 $$f(A(n-i))=f(A)-\delta_{n-i}=\frac{(n-1)(n-2)}{2}-4\textrm{~~for~~}i=1,2,3,4.$$

Given $i\in\{1,2,3,4,5\}$, applying Corollary \ref{co1} to $A(n-i)$ we know  each $4\times 4$ principal submatrix of $E$  is permutation similar to $T_4$. Let $w_5=x$ and $u_5=y^T$. By Lemma 9 of \cite{HZ1}, $E$ is permutation similar to $T_5$ and $A$ is permutation similar to
$$H=\begin{bmatrix}
T_{n-5}&w_{\sigma_1}&w_{\sigma_2}&w_{\sigma_3}&w_{\sigma_4}&w_{\sigma_5}\\
u_{\sigma_1}&0&1&1&1&1\\
u_{\sigma_2}&0&0&1&1&1\\
u_{\sigma_3}&0&0&0&1&1\\
u_{\sigma_4}&0&0&0&0&1\\
u_{\sigma_5}&0&0&0&0&0
\end{bmatrix}$$
with $\sigma$ a permutation of $\{1,2,3,4,5\}$. Applying Corollary \ref{co1} to each $A(n-i)$ again we get
  $$w_{\sigma_i}=(J_{1,k_i},0)^T,~~ u_{\sigma_i}=(0,J_{1,n-k_i-6})\textrm{~~for~~} i=1,2,3,4,5$$  with $0\le k_{1}<k_{2}<k_{3}<k_{4}<k_{5}\le n-6$.

Denote $G=J_2\otimes T_{n-5}$ and $$\beta=\{n-4,n-5,\ldots,2(n-5)\}
\setminus\{n-4+k_1,n-4+k_2,n-4+k_3,n-4+k_4,n-4+k_5\}.$$  Then $H=G(\beta)$ and $D$ is a $(1,n-5,5)$-completely transitive tournament.
   \end{proof}
 Now we are ready to present the proof of Theorem \ref{thh2}.\\
 \par

 {\bf Proof of Theorem \ref{thh2}.}~~
 We use induction on $n$. By  Theorem \ref{le8} and Lemma \ref{le14}  we  know the results hold  for $n=k+4$ and $n=k+5$. Assume  the results hold  for  $n=k+5,\ldots,sk+t$, where $0\le t<k$ and $s>0$ are integers.

 Now suppose $n=sk+t+1.$ Let $u,v$ be  integers  such that $v<k$ and $ n=uk+v$. Then $u=s$, $v=t+1$ when $t<k-1$, and $u=s+1$, $v=0$ when $t=k-1$.

Given any $ \mathscr{F}_{k}$-free digraph  $D$ of order $n$, denote by $A\equiv A_D$ its adjacency matrix.
For any  $ i\in\{1,2,\ldots,n\}$, since the digraph of $A(i)$ is an $\mathscr{F}_{k}$-free digraph of order $n-1$, by the induction hypothesis we have
\begin{eqnarray}\label{eq342}
\nonumber f(A(i))&\le& ex(n-1,\mathscr{F}_{k})\\
\nonumber &= &{n-1 \choose 2} -{s\choose 2}k-st\\ &=&\frac{(n-1)(n-2)}{2}-\frac{(s-1)(n-1)}{2}-\frac{(s+1)t}{2}.
\end{eqnarray}
Applying Lemma \ref{le3} we have
\begin{eqnarray*}
f(A)&\le& \frac{n(n-1)}{2}-\frac{(s-1)(n-1)}{2}-\frac{(s+1)t}{2}-s\\
 &=&\frac{n(n-1)}{2}-\frac{(s-1)n}{2}-\frac{(s+1)(t+1)}{2}\\
 &=&\frac{n(n-1)}{2}-\frac{(u-1)n}{2}-\frac{(u+1)v}{2}.
  \end{eqnarray*}
 Hence,
 \begin{eqnarray*}
ex(n,\mathscr{F}_{k})\le
  \frac{n(n-1)}{2}-\frac{(u-1)n}{2}-\frac{(u+1)v}{2}.
  \end{eqnarray*}

On the other hand, if $D$ is a   $(u,k,v)$-completely transitive tournament, then there exist  $k-t$ numbers $j_1,j_2,\ldots,j_{k-t}\in \{uk+1,uk+2,\ldots,(u+1)k\}$ such that
$$PAP^T=\Pi_{u+1,k}(j_1,j_2,\ldots,j_{k-v})$$ for some permutation matrix $P$.
Since $(\Pi_{u+1,k})^k=0$, we have $(A_D)^k=0$, and hence $D$ is $\mathscr{F}_{k}$-free. Moreover,
  the size of $D$ is
$$f(A)=\frac{n(n-1)}{2}-\frac{(u-1)n}{2}-\frac{(u+1)v}{2}.$$
Hence,
 \begin{eqnarray}\label{eq43}
ex(n,\mathscr{F}_{k})=
  \frac{n(n-1)}{2}-\frac{(u-1)n}{2}-\frac{(u+1)v}{2}={n \choose 2}-{u \choose 2}k-uv
  \end{eqnarray}
   and any $(u,k,v)$-completely transitive tournament is in $Ex(n,\mathscr{F}_{k})$.
 \\
\par
Conversely, suppose $D\in Ex(n,\mathscr{F}_{k})$ and $A\equiv A_D$.
Applying Lemma \ref{le3} to $A$, by (\ref{eq43}) we know there is some $i_0\in \{1,2,\ldots,n\}$ such that equality in (\ref{eq342}) holds. Without loss of generality, we assume $i_0=n$.  We distinguish two cases.

{\it Case 1.} $t=0$. Then $u=s\ge 2$ and $v=1$. By the induction hypothesis we may assume
\begin{equation}\label{eq344}
A=\begin{bmatrix}
T_{k}&T_{k}&\cdots&T_{k}&x_1\\
T_{k}&T_{k}&\cdots&T_{k}&x_2\\
\vdots&\vdots&\ddots&\vdots&\vdots\\
T_{k}&T_{k}&\cdots&T_{k}&x_s\\
y_1^T&y_2^T&\ldots&y_s^T&a_{n,n}
\end{bmatrix}\in M_{sk+1}\{0,1\},
\end{equation}where $x_{i},y_{i}\in \mathbb{R}^{k}$ for $i=1,2,\cdots,s$.

 Let $a_{s_{i},n}$ be the last nonzero component in $x_{i}$,  and $a_{n,t_{i}}$ be the first nonzero component in $y_{i}$  for $i=1,\ldots, s$. Here we define $s_i=(i-1)k$ when $x_{i}=0$, and $t_{i}=ik+1$  when $y_i=0$.

 We claim
 \begin{equation}\label{eq44}
 t_{i}\ne s_{i}+1 \textrm{~~for~~} 1\le i\le s.
 \end{equation}
  Note that we can change the role of each $(x_i,y_i)$ by permutation similarity. To prove (\ref{eq44}) it suffices to verify the case $i=1$.
  Suppose $t_1=s_1+1$.
  If  $s_{1}\le 2$, then $D$ has two distinct walks of length $k$ from 1 or $n$ to $k$:
    $$\left\{\begin{array}{l}
  1\rightarrow \cdots\rightarrow s_{1}\rightarrow n\rightarrow s_{1}+1\rightarrow s_{1}+2\rightarrow s_{1}+3\rightarrow  \cdots \rightarrow k,\\
 1\rightarrow \cdots\rightarrow s_{1}\rightarrow n\rightarrow s_{1}+1\rightarrow k+s_{1}+2\rightarrow s_{1}+3\rightarrow \cdots\rightarrow  k,
\end{array}\right.$$ where the walk $1\rightarrow \cdots\rightarrow s_{1}$ does not appear when $s_1=0$.
If $3\le s_{1}\le k$, then $D$ has two distinct walks of length $k$ from 1 to $n$ or $k$:
  $$\left\{\begin{array}{l}
 1\rightarrow  2\rightarrow  3\rightarrow \cdots\rightarrow s_{1}\rightarrow n\rightarrow s_{1}+1\rightarrow s_{1}+2\rightarrow \cdots\rightarrow k,\\
  1\rightarrow  k+2\rightarrow  3\rightarrow \cdots\rightarrow s_{1}\rightarrow n\rightarrow s_{1}+1\rightarrow s_{1}+2\rightarrow \cdots\rightarrow  k,
\end{array}\right.$$
where the walk  $n\rightarrow s_{1}+1\rightarrow s_{1}+2\rightarrow \cdots\rightarrow  k$ does not appear when $s_1=k$. This contradicts $D\in Ex(n,\F)$ and (\ref{eq44}) follows.

Denote by $$B_i=\begin{bmatrix}T_k&x_i\\y_i^T&a_{nn}\end{bmatrix} \textrm{~~for~~} 1\le i\le s.$$ Given any $i\in \{1,2,\ldots,s\}$, since $B_i$ is a principal submatrix of $A$, then $B_i\in \Gamma(k+1,k)$. By Theorem \ref{thh1} we have $f(B_i)\le k(k+1)/2$ and
\begin{equation}\label{eq45}
f(x_i)+f(y_i)+a_{nn}=f(B_i)-f(T_k)\le k.
\end{equation}
If equality in (\ref{eq45}) holds, then applying Lemma \ref{le7} to $B_i$ we get $t_i=s_i+1$, which contradicts (\ref{eq44}). Therefore, we have
\begin{equation*}
f(x_i)+f(y_i)+a_{nn}\le k-1 \textrm{~~for~~} 1\le i\le s.
\end{equation*}

Since
\begin{eqnarray*}
n-s-1= s(k-1)&\ge& \sum_{i=1}^s[f(x_i)+f(y_i)+a_{nn}]\\
&\ge& \sum_{i=1}^s[f(x_i)+f(y_i)]+a_{nn}\\
& =&f(A)-s^2f(T_k)=n-s-1
\end{eqnarray*}
we get $a_{nn}=0$ and
\begin{equation*}
f(x_i)+f(y_i)= k-1 \textrm{~~for~~} 1\le i\le s.
\end{equation*}
By (\ref{eq44}), applying Lemma \ref{le7} to each $B_i$ again we have
 $$t_i=s_i+2 \textrm{~~and~~}
  x_i=(J_{1, q_i},0)^T,~~y_i=(0,J_{1, k-q_i-1})^T \textrm{~~with~~} q_i\in\{0,1,\ldots,k-1\}$$
for all $1\le i\le s$.
Now we assert $$q_1=q_2=\cdots=q_s=q \textrm{~~for some~~} q\in\{0,1,2,\ldots,k-1\}.$$  Otherwise, without loss of generality we assume $q_1< q_2$. Then
$$s_1=q_1,  s_2=k+q_2.$$ Denote by $P$ the permutation matrix obtained by  interchanging row $q_1+1$ and row $k+q_1+1$ of the $n\times n$ identity matrix. Let $A'=(a'_{ij})=PAP^T$.
Then $A'\in Ex(n,\F)$ has the same form (\ref{eq344}) as $A$. Moreover, the last nonzero component in $x_1$ is $a'_{q_1+1,n}=a_{k+q_1+1,n}$; the first nonzero component in $y_1$ is  $a'_{n,q_1+2}=a_{n,q_1+2}$. If we define $s'_i$ and $t'_i$ for $A'$ the same as $s_i$ and $t_i$ for $A$, then
$$s'_1=q_1+1,  \textrm{~~and~~}
  t'_1=    q_1+2.$$
On the other hand, using the same arguments as above we have $t'_1=s'_1+2$, a contradiction.

Therefore, we have $x_{i}=(J_{1,q},0)^T$, $y_{i}=(0,J_{1,k-q-1})^T$ for $i=  1,\ldots,s$, and  $A$ is permutation similar to $\Pi_{s+1 ,k}(\beta)$ with $\beta=\{sk+1,sk+2,\ldots,sk+k\}\setminus\{q+1\}$. Hence, $D$ is a $(u,k,v)$-completely transitive tournament.

Case 2. $t\ne 0$. By the induction hypothesis we may assume
  $$A=\begin{bmatrix}
T_{k}&T_{k}&\cdots&T_{k}&w_1&w_2&\cdots&w_{t}&x_1\\
T_{k}&T_{k}&\cdots&T_{k}&w_1&w_2&\cdots&w_{t}&x_2\\
\vdots&\vdots&&\vdots&\vdots&\vdots&&\vdots&\vdots\\
T_{k}&T_{k}&\cdots&T_{k}&w_1&w_2&\cdots&w_{t}&x_{s}\\
 u_1&u_1&\cdots&u_1&0&1&\cdots&1&a_{sk+1,n}\\
u_2&u_2&\cdots&u_2&0&0&\ddots&\vdots&\vdots\\
\vdots&\vdots&&\vdots&\vdots&\vdots&\ddots&1&a_{sk+t-1,n}\\
u_{t}&u_{t}&\cdots&u_{t}&0&0&\cdots&0&a_{sk+t,n}\\
y_1^T&y_2^T&\ldots&y_{s}^T&a_{n,sk+1}&a_{n,sk+2}&\cdots&a_{n,sk+t}&a_{nn}
\end{bmatrix}\in M_{s k+t+1}\{0,1\},$$where $x_{i},y_{i}\in \mathbb{R}^{k}$  for $i=1,2,\ldots,s$, $w_j=(J_{1,k_j},0)^T$, $u_j=(0,J_{1,k-1-k_j})$, and $0\leq k_1<k_2<\cdots<k_{t}\leq k-1$.

Let $i\in\{1,2,\ldots,t\}$. We claim
\begin{equation}\label{eq47}
a_{sk+i,n}+a_{n,sk+i}=1.
\end{equation}
Since $A(sk+i)\in \Gamma(n-1,k)$, we have
\begin{eqnarray}\label{eq348}
a_{sk+i,n}+a_{n,sk+i}&=&f(A)-f(A(sk+i))-  s[f(w_i)+f(u_i)]-
(t-1)  \nonumber\\
&\ge& f(A)-ex(n-1,\F)- s(k-1)-(t-1)=1.
\end{eqnarray}
If  $a_{sk+i,n}=a_{n,sk+i}=1$, setting $$\alpha=\{1,2,\ldots,k,sk+i,n\}$$
 and applying Lemma \ref{le10} to $A[\alpha]$ we get $A[\alpha]\not\in \Gamma(k+2,k)$, which contradicts $A\in \Gamma(n,k)$.
Thus we get (\ref{eq47}).

Combining (\ref{eq47}) and (\ref{eq348}) we get
 $$f(A(sk+i))=ex(n-1,\F).$$ By the induction hypothesis, $A(sk+i)$ is permutation similar to
  a submatrix of $\Pi_{s+1,k}$. Therefore, $a_{nn}=0$ \textrm{  and } $$ \sum_{i=1}^s[f(x_i)+f(y_i)]=f(A)-f(A(n))-a_{nn}-
  \sum_{i=1}^t(a_{sk+i,n}+a_{n,sk+i})=s(k-1).$$

Next we distinguish two subcases.

{\it Subcase 1.}    $s>1$. Let $\alpha
=\{sk+1,sk+2\ldots,sk+t\}$. We consider $A(\alpha)$. Then
\begin{eqnarray*}
f(A(\alpha))&=&f(A)-s\sum_{i=1}^t[f(w_i)+f(u_i)]-t(t-1)/2-\sum_{i=1}^t(a_{sk+i,n}+a_{n,sk+i})-a_{nn}\\
&=&\frac{n(n-1)}{2}-\frac{(s-1)n}{2}-\frac{(s+1)(t+1)}{2}-st(k-1)-t(t-1)/2-t\\
&=& \frac{(n-t)(n-t-1)}{2}-\frac{(s-1)(n-t)}{2}-\frac{s+1}{2}\\
&=&ex(n-t,\F).
\end{eqnarray*}
 Applying  Case 1 to $A(\alpha)$, we have $$x_j=(J_{1,q},0)^T  \textrm{~~and~~} y^T_j=(0,J_{1,k-1-q})^T \textrm{~~for all~~}j\in \{1,\ldots,s\} $$ with $  0\le q\le k-1$.

  We assert   $q\not\in \{k_1,k_2,\ldots,k_t\}$. Otherwise suppose $q=k_i$ for some $i$.  Since $$a_{q+1,n}=a_{n,q+1}=a_{q+1,sk+i}=a_{sk+i,q+1}=0,$$  we have
   $$\delta_{q+1}\le s(k-1)+t-1=n-s-2$$
    and
    $$f(A(q+1))=f(A)-\delta_{q+1}>ex(n-1,\F),$$ which contradicts  $A(q+1)\in \Gamma(n-1,k)$.

Next we show that
\begin{equation*}\label{eq49}
a_{sk+i,n}=1 \textrm{~~when~~}q>k_i \textrm{~~for~~} i=1,\ldots,t.
\end{equation*}
 Otherwise suppose $q>k_i$ and $a_{sk+i,n}=0$. Then by (\ref{eq47}) we have $a_{sk+i,q+1}=a_{n,sk+i}=1$.
If $q\le 2$, then  $D$ has two distinct walks of length $k$ from $1$ to $k$:
$$\left\{\begin{array}{l}
  1\rightarrow \cdots\rightarrow q\rightarrow n\rightarrow sk+i\rightarrow q+1\rightarrow  q+2\rightarrow q+3\rightarrow \cdots\rightarrow k-1\rightarrow k,\\
  1\rightarrow \cdots\rightarrow q\rightarrow n\rightarrow sk+i\rightarrow q+1\rightarrow k+q+2\rightarrow q+3\rightarrow \cdots\rightarrow k,
\end{array}\right.$$
a contradiction.
If $q\ge 3$, then $D$ has two distinct walks of length $k$ from $1$ to $k$:
$$\left\{\begin{array}{l}
  1\rightarrow 3\rightarrow   \cdots\rightarrow q\rightarrow n\rightarrow sk+i\rightarrow q+1\rightarrow \cdots\rightarrow k-1\rightarrow k,\\
  1\rightarrow 2\rightarrow  \cdots\rightarrow q-1\rightarrow n\rightarrow sk+i\rightarrow q+1\rightarrow \cdots\rightarrow k-1\rightarrow k,
\end{array}\right.$$
a contradiction.

Using similar arguments as above, we can deduce $a_{sk+i,n}=0$ when $q<k_i$.
Let $$\beta=\{sk+1,sk+2,\ldots,n\}\setminus\{sk+k_1+1,sk+k_2+1,\ldots,sk+k_t+1,sk+q+1\}.$$ If $q>k_t$, then $A=\Pi_{s+1,k}(\beta)$. Otherwise let
\begin{eqnarray*}
P=\left\{\begin{array}{ll}
 \begin{bmatrix}
0&1\\
I_{t}&0\end{bmatrix},&\textrm{ if }q<k_1,\\
 \begin{bmatrix}
 I_{j}&0&0\\
 0&0&1\\
0&I_{t-j}&0\end{bmatrix},&\textrm{ if } k_j<q<k_j+1,
\end{array}\right.
\end{eqnarray*}
 and $Q=I_{sk}\oplus P$. Then $QAQ^T=\Pi_{s+1,k}(\beta)$. Therefore, $A$ is  permutation similar to  $\Pi_{s+1,k}(\beta)$  and $D$ is a  $(u,k,v)$-completely transitive tournament.

{\it Subcase  2.} $s=1$.   Then $t\ge 5$ and
$$A=(a_{ij})=\begin{bmatrix}
T_{k}&w_{1}&w_{2}&\cdots&w_{t}&x\\
u_{1}&0&1&\cdots&1&a_{k+1,n}\\
u_{2}&0&0&\ddots&\vdots&\vdots\\
\vdots&0&0&\ddots&1&\vdots\\
u_{t}&0&0&0&0&a_{k+t,n}\\
y^{T}&a_{n,k+1}&a_{n,k+2}&\cdots&a_{n,k+t}&0
\end{bmatrix}\in M_{n}\{0,1\},$$ where $x,y\in \mathbb{R}^{n-5}$, $w_{i}=(J_{1,k_i},0)^T$, $u_{i}=(0,J_{1,k-1-k_i})$, and $0\leq k_1<k_2<\cdots<k_t\leq k-1$.

Choose any three distinct numbers $p,q,r\in \{1,2,\ldots,t\}$.
Denote $\alpha=\{1,2,\ldots,k,k+p, k+q,k+r,n\}$ and $B=A[\alpha]$. Then
$$f(B)=f(T_k)+3(k-1)+3+f(x)+f(y)+3=(k+4)(k+3)/2-4=ex(k+4,\F).$$
  Applying Corollary \ref{co1} to $B$, we have $$x=(J_{1,h},0)^T, ~~y=(0,J_{1,k-h-l})^T,$$  where $0\le h\le k-1$. Moreover, we have $h\not\in\{k_p,k_q,k_r\}$ and
  \begin{equation}\label{eq50}
  ~~a_{k+i,n}=\left\{\begin{array}{ll}
  1,& \textrm{ if } h>k_i,  \\
  0,& \textrm{ if } h<k_i,
  \end{array}
  \right.\end{equation} \textrm{ for } $i=p,q,r$.

Since $p,q,r$ are arbitrarily chosen from $\{1,2,\ldots,t\}$, we have
$$h\not\in\{k_1,k_2,\ldots,k_t\}$$ and
(\ref{eq50}) holds  \textrm{ for } $i=1,2,\ldots,t.$  Using the same arguments as in the previous subcase, we can prove that
  $D$ is  a $(u,k,v)$-completely transitive tournament.

This completes the proof.     \hspace{10cm}                  $\Box$

\section{Further discussion}

In this section we discuss the unsolved cases for Problem \ref{pro1}. We focus our attention on  strict digraphs.
The second part of Theorem \ref{thh2} may  not be true for $n=k+5$ when $k=4$. For example, let $D$ be the digraph with adjacency matrix
$$A=\begin{bmatrix}
0&0&1&1&1&1&1&1&1\\
0&0&1&1&1&1&1&1&1\\
0&0&0&1&1&1&1&0&1\\
0&0&0&0&1&1&1&0&0\\
0&0&0&0&0&1&1&0&0\\
0&0&0&0&0&0&0&0&0\\
0&0&0&0&0&0&0&0&0\\
0&0&0&0&1&1&1&0&1\\
0&0&0&0&0&1&1&0&0
\end{bmatrix}.$$
Then $f(A)=30$ and $A^4\in M_{9}\{0,1\}$.  But $A$ is not permutation similar to any principal submatrix of $\Pi_{4,3}$, since $A^4\ne 0$.
However, when $k=4$ and $n$ is sufficiently large, we  conjecture  the second part of Theorem \ref{thh2} is still true.

When $k=3$, Theorem \ref{thh2} does not hold, which  is shown by the following example.  Let $$T'_3=\begin{bmatrix}
0&1&1\\
0&1&1\\
0&0&0
\end{bmatrix} $$  and
$$A=\begin{bmatrix}
T_{3}&T'_{3}&T_{3}&\cdots&T_{3}\\
T_{3}&T_{3}&T_{3}&\cdots&T_{3}\\
T_{3}&T_{3}&T_{3}&\cdots&T_{3}\\
\vdots&\vdots&\vdots&\ddots&\vdots\\
T_{3}&T_{3}&T_{3}&\cdots&T_{3}
\end{bmatrix}\in M_{3t}\{0,1\}.$$
Denote $(b_{ij})=A^3$. Then $$b_{ij}=
\begin{cases}
1,&   \textrm{if~~} i=3s+1,j=3l \textrm{~~for~~}  s\in\{0,\ldots,t-1\}, l\in\{1,\ldots,t\};\\
0,& \textrm{otherwise}.
\end{cases}
$$
Therefore, $D(A)$ is an $\mathscr{F}_3$-free digraph and
$$ex(3t,\mathscr{F}_3)\ge f(A)>f(\Pi_{t,3}).$$

When $k=2$, by  \cite[Theorem 2]{WU} we can deduce
$$
ex(n,\mathscr{F}_2)=\begin{cases}\frac{n^2+4n-5}{4}, & \text{if $n$ is odd,}\\
    \frac{n^2+4n-8}{4},& \text{if $n$ is even and $n\not=4,$}\\
    7, & \text{if $n=4$}.\end{cases}
$$
 The set of extremal digraphs $Ex(n,\mathscr{F}_2)$ is not known.

We leave the problems of determining $ex(n,\mathscr{F}_3)$ and $Ex(n,\mathscr{F}_k)$ for $k=2,3,4$ for  future work.

\section*{Acknowledgement}

The first author is grateful to Professor Yuejian Peng for helpful discussions on  extremal graph theory.  The research of Huang
was supported by the NSFC grant 11401197 and a Fundamental Research
Fund for the Central Universities.


\begin{thebibliography}{WWW}

 \bibitem{BB}B. Bollob\'as, Extremal graph theory, Handbook of combinatorics, Vol.   2, 1231-1292, Elsevier, Amsterdam, 1995.
\bibitem{BM} J.A. Bondy, U.S.R. Murty, Graph Theory, in: GTM, vol. 244, Springer, 2008.
\bibitem{BES} W.G. Brown, P. Erd\H os, M. Simonovits,
Extremal problems for directed graphs,
J. Combinatorial Theory Ser. B 15 (1973)  77-93.
\bibitem{BES2}   W.G. Brown, P. Erd\H os, M. Simonovits,
Algorithmic solution of extremal digraph problems,
Trans. Amer. Math. Soc. 292 (1985) 421-449.
\bibitem{BH} W.G. Brown and F. Harary, Extremal digraphs, Combinatorial Theory and its Applications, Colloq. Math. Soc. Janos Bolyai 4 (1970) I, 135-198.
\bibitem{BS}W. G. Brown, M. Simonovits,
Extremal multigraph and digraph problems, Paul Erd\H os and his mathematics, II (Budapest, 1999), 157-203,
Bolyai Soc. Math. Stud., 11, J¨¢nos Bolyai Math. Soc., Budapest, 2002.

\bibitem{FK}F. A. Firke,  P. M. Kosek, E. D. Nash, J. Williford, Extremal graphs without 4-cycles, J. Combin. Theory Ser. B 103 (2013) 327-336.

\bibitem{HZ1}Z. Huang, X. Zhan, Digraphs that have at most one walk of a given length with the same endpoints, Discrete Math. 311 (2011) 70-79.

     \bibitem{JM}H. Jacob, H. Meyniel,
Extension of \tu's and Brooks' theorems and new notions of stability and coloring in digraphs, Annals of Discrete Mathematics, 17 (1983) 365-370.
\bibitem{VN}V. Nikiforov, Some new results in extremal graph theory, Surveys in combinatorics 2011, 141-181, London Math. Soc. Lecture Note Ser.  392, Cambridge Univ. Press, Cambridge, 2011.
\bibitem{TT} M. Tait, C. Timmons, Sidon sets and graphs without 4-cycles, J. Comb. 5 (2014) 155-165.

\bibitem{PT} P. \tu, Egy grafelmeletsi zelsbertekfe ladatrbl, Mat. Fiz. Lapok 48 (1941),4 36-452.
\bibitem{PT2} P. \tu, On the theory of graphs, Colloq. Math. 3 (1954) 19-30.

\bibitem{WU} H. Wu, On the 0-1 matrices whose squares are 0-1 matrices, Linear Algebra Appl. 432 (2010) 2909-2924.

    \bibitem{ZH}X. Zhan, Matrix theory, Graduate Studies in Mathematics 147, American Mathematical Society, Providence, RI, 2013.
\end{thebibliography}
\end{document}